\newtheorem{theorem}{\bf Theorem}[section]
\newtheorem{lemma}[theorem]{\bf Lemma}
\newtheorem{corollary}[theorem]{\bf Corollary}
\newtheorem{cor}[theorem]{\bf Corollary}
\newtheorem{proposition}[theorem]{\bf Proposition}
\newtheorem{prop}[theorem]{\bf Proposition}
\newtheorem{problem}[theorem]{\bf Problem}
\newtheorem{conj}[theorem]{\bf Conjecture}
\theoremstyle{definition}
\newtheorem{notation}[theorem]{\bf Notation}
\newtheorem{remark}[theorem]{\bf Remark}
\newtheorem{definition}[theorem]{\bf Definition}
\newcommand{\cS}{\mathcal{S}}
\newcommand{\supp}{\mathrm{Supp}}
\newcommand{\PG}{\mathrm{PG}}
\newcommand{\AG}{\mathrm{AG}}
\newcommand{\GF}{\mathrm{GF}}
\newcommand{\pf}{\mathrm{pf}_{k}}
\newcommand{\F}{\mathbb{F}}
\newcommand{\ff}{\mathbb{F}}
\newcommand{\zz}{\mathbb{Z}}
\newcommand{\eps}{\varepsilon}
\newcommand{\angbra}[1]{\left\langle #1 \right\rangle}
\newcommand{\twobinom}[2]{\begin{bmatrix}#1\\ #2\end{bmatrix}_2}
\title{Avoiding intersections of given size in finite affine spaces $\AG(n,2)$}
\author{Benedek Kovács\thanks{ELTE Linear Hypergraphs Research Group, Eötvös Loránd University, Budapest, Hungary. The author is supported by the ÚNKP-23-3 New National Excellence Program of the Ministry for Culture and
		Innovation from the source of the National Research, Development and Innovation Fund.
		E-mail: {\tt benoke98@student.elte.hu}}
	\and Zolt\'an L\'or\'ant Nagy\thanks{ELTE Linear Hypergraphs  Research Group,
		E\"otv\"os Lor\'and University, Budapest, Hungary. The author is supported by the Hungarian Research Grant (NKFIH) No. PD  134953. and No. K. 124950. 	E-mail: {\tt nagyzoli@cs.elte.hu}}
}
\date{}
\begin{document}
	
	\maketitle
	
	\begin{abstract}
		We study the set of intersection sizes of a $k$-dimensional affine subspace and a point set of size  $m \in [0, 2^n]$ of the $n$-dimensional binary affine space $\AG(n,2)$. Following the theme of Erdős, Füredi, Rothschild and T. Sós, we partially determine which local densities in $k$-dimensional affine subspaces are unavoidable in all $m$-element point sets in the $n$-dimensional affine space.\\
		We also show constructions of point sets  for which the intersection sizes with $k$-dimensional affine subspaces takes values from a set of a small size compared to $2^k$. These are built up from affine subspaces and so-called subspace evasive sets. Meanwhile, we improve the best known upper bounds on subspace evasive sets and apply results concerning the canonical signed-digit (CSD) representation of numbers. \\
		\textit{Keywords}: unavoidable, affine subspaces, evasive sets, random methods,  canonical signed-digit  number system.
	\end{abstract}
	
	\section{Introduction}
	
	In this paper, we address the following general problem. Let $S$ be a subset of size $m$ of the affine space $\AG(n,q)$. Does there always exist a $k$-dimensional affine subspace which contains exactly $t$ points of $S$? A $k$-dimensional affine subspace will be referred to as a $k$-\textit{flat}, and a $k$-flat which contains exactly $t$ points of a given set $S$ will be called a \textit{$[k,t]$-flat (induced by $S$)}. If for every $m$-element set $S$ in the $n$-dimensional affine space, there is a $k$-flat containing exactly $t$ points of $S$, then we say that the pair $[n,m]$ forces the pair $[k,t]$, or that $t$-sets are unavoidable in $k$-flats. We use the notation $[n,m]_q\rightarrow [k,t]$ for this concept. 
	
	Our main focus will be the case $q=2$, and we will omit the index $q$ except when we wish to refer to arbitrary finite fields.
	
	The graph theoretic analogue of this problem was initiated by 	Erdős, Füredi, Rothschild and T. Sós \cite{EFRS}.
	
	Let $G(n,m)$ denote a graph on $n$ vertices and $m$ edges. 
	Fix a positive integer $k$ and a pair of integers $(n, m)$ such that $0\leq m\leq \binom{n}{2}$. For which $t$ does it hold that 
	any $n$-vertex graph with $m$ edges contains an induced subgraph on $k$ vertices having exactly $t$ edges? Equivalently, we are seeking pairs $(k,t)$ such that $k$-vertex induced subgraphs with $t$ edges are unavoidable in graphs of the form $G(n,m)$. Erdős, Füredi, Rothschild and T. Sós introduced the notation $(n,m)\rightarrow (k,t)$ for the case when this is true.
	
	Their main result showed that forced pairs $(k,t)$ are rare in the following sense. Consider the set $Sp(n;k,t)$ of all edge cardinalities $m$ such that  $(n,m)\rightarrow (k,t)$.
	Its density is the ratio $\frac{|Sp(n;k,t)|}{\binom{n}{2}}$. They proved that the limit superior of this density is bounded from above by $2/3$ apart from a handful cases, and is $0$ for the majority of the pairs $(k,t)$ for fixed and large enough $k$. Erdős, Füredi, Rothschild and T. Sós
	conjectured that in fact it is bounded from above by $1/2$ apart from finitely many pairs $(k,t)$. This was confirmed recently by He, Ma and Zhao \cite{He}. Several related problems have been studied in the last couple of years \cite{Axen, Axen2, Zarb}.
	
	Our problem can be viewed as the $q$-analogue of this problem, in which we investigate whether all subspaces of given dimension $k$ can avoid to have an intersection of size $t$ with an $m$-element set of the space $\AG(n,q)$, which  corresponds to $\ff_q^n$.
	
	Note that problems of similar flavour have gained significant interest in the past and are trending as well in recent years. The celebrated Sylvester-Gallai theorem showed that if a set of $m$ points in the Euclidean $2$-dimensional space is not collinear, then there will be a line which intersects the point set in exactly $2$ points. \\
	Considering the case $q=3$, $k=1$ and $t=3$, we in turn get the famous cap set problem, which asks for the maximum number of points in $\AG(n,3)$ without creating a line, or in other words, without containing a $3$-term arithmetic progression.  There has been a recent breakthrough due to Ellenberg and Gijswijt \cite{Ellen}, building upon the ideas of Croot, Lev and Pach \cite{CLP}, which showed that to avoid complete lines, $|S|/3^n$ has to be exponentially small. The arithmetic removal lemma implies a supersaturation result on the number of complete lines \cite{Fox2}, from which bounds on the case $[1,3]$ translate to similar bounds for the case of complete $k$-flats, i.e., cases $[k,3^k]$. This connection highlights the complexity of the problem.

	There has been significant interest in the case when we want to forbid each intersection of size larger than or equal to $t$ instead of avoiding only $t$-sets in $k$-flats. If there exists a set $S\subseteq \mathrm{AG}(n,q)$ for which all $k$-dimensional affine subspaces contain at most $c$ points of $S$, then $S$ is called \textit{$(k,c)$-subspace evasive}. The importance of such sets is highlighted by its connections to explicit constructions of bipartite Ramsey graphs by Pudlák and Rödl \cite{Rodl} and to list-decodable codes by Guruswami \cite{Guruswami}.
	
	By a standard application of the first moment method, Guruswami obtained random subsets of $\ff^n$ of large size which are $(k,c)$-subspace evasive. (Here and throughout the paper, $\ff$ denotes a finite field.)
	\begin{theorem}[Guruswami, \cite{Guruswami}]\label{Guru}
		For any fixed pair $(k,c)$,	there exists a $(k,c)$-evasive set in $\ff^n$ of size at least $C\cdot |\ff|^{n(1-\frac{2k}{c})}$, where $C>0$ is a constant independent of $n$.
	\end{theorem}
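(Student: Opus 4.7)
The plan is to apply the standard probabilistic alteration (deletion) method. I would include each point of $\ff^n$ independently with probability $p$ (to be tuned below) to form a random set $T \subseteq \ff^n$, and then for every $k$-flat $A$ with $|A \cap T| > c$ delete one point of $A \cap T$, producing a set $S$ that is $(k,c)$-subspace evasive by construction. The only remaining task is to lower bound $\mathbb{E}[|S|]$.

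By linearity of expectation,
$$\mathbb{E}[|S|] \geq p|\ff|^n - N_k \binom{|\ff|^k}{c+1} p^{c+1},$$
where $N_k$ denotes the number of $k$-flats of $\ff^n$. I would use the very crude bounds $N_k \leq |\ff|^{n(k+1)}$ (any $k$-flat is spanned by some ordered $(k+1)$-tuple of points, with heavy overcounting) and $\binom{|\ff|^k}{c+1} \leq |\ff|^{k(c+1)}$, so that the error term is at most $|\ff|^{n(k+1)+k(c+1)} p^{c+1}$.

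Setting $p := |\ff|^{-2nk/c}$ makes the linear term $p|\ff|^n$ equal to $|\ff|^{n(1-2k/c)}$, and a short exponent calculation shows that the error term is smaller than the linear term by a factor of order $|\ff|^{k(n-c-1)}$. Hence for $n$ above a threshold $n_0(k,c,|\ff|)$ the error is at most half of the linear term, so some realisation of $T$ yields $|S| \geq \tfrac{1}{2}|\ff|^{n(1-2k/c)}$. For the finitely many values $n < n_0$ a single-point set is already $(k,c)$-evasive, so by shrinking the constant $C$ one obtains a uniform bound valid for all $n$. The only step requiring care is the exponent bookkeeping, since it is precisely where the ratio $2k/c$ in the theorem's exponent originates; once $p$ is calibrated so that the deletion cost is only a constant fraction of the linear term, the result follows immediately.
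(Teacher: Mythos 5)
Your alteration argument is correct: the exponent bookkeeping checks out (the deletion cost is smaller than the linear term $p|\ff|^n$ by a factor $|\ff|^{k(n-c-1)}$, which exceeds $2$ once $n\ge c+2$), and the treatment of small $n$ by shrinking $C$ is fine. Note that the paper does not prove this theorem itself but cites Guruswami for it; however, the paper's own strengthening, Theorem \ref{ujgur}, is proved by essentially the same alteration you describe. The differences are cosmetic: the paper samples a uniformly random $m$-element subset rather than including points independently with probability $p$, and it uses the sharp Gaussian-binomial count of $k$-flats (Corollary \ref{numk-flats} and Lemma \ref{HN}, giving roughly $q^{(n-k)(k+1)}$) rather than your crude $|\ff|^{n(k+1)}$ overcount by ordered $(k+1)$-tuples. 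Your choice $p=|\ff|^{-2nk/c}$ is deliberately generous, making the deletion cost wildly negligible at the price of a smaller linear term; calibrating $p$ closer to $|\ff|^{-nk/c}$ in your own calculation — even keeping the crude flat count, which only contributes lower-order terms — would in fact recover the paper's improved exponent $1-\frac{k}{c}$, but that is not required for the theorem as stated.
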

	
	\begin{cor}\label{intro_evasive_cor}
		For any fixed pair $(k,t)$ with $t>1$, there exists a constant $C>0$ for which the following holds: if $m\le C\cdot 2^{n(1-\frac{2k}{t-1})}$, then $t$-sets are avoidable in the $k$-flats of $\AG(n,2)$, i.e., in this case $[n,m]\not \rightarrow [k,t]$.
	\end{cor}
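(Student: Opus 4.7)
The plan is to derive this corollary immediately from Theorem~\ref{Guru}, using the simple but key observation that any $(k,t-1)$-subspace evasive set $S$ admits no $k$-flat containing exactly $t$ points of $S$, since by definition every $k$-flat contains at most $t-1$ points of $S$. The parameters line up naturally: Theorem~\ref{Guru} produces evasive sets whose intersection with every $k$-flat is bounded by $c$, and here I will take $c = t-1$, which is a positive integer precisely because $t > 1$.

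Concretely, I will first invoke Theorem~\ref{Guru} over the field $\F_2$ with $c = t-1$ to obtain a $(k,t-1)$-subspace evasive set $S' \subseteq \F_2^n$ of size at least $C \cdot 2^{n(1 - 2k/(t-1))}$, where $C = C(k,t) > 0$ depends only on the fixed pair $(k,t)$. Next, given any $m$ satisfying $m \le C \cdot 2^{n(1 - 2k/(t-1))}$, I will pick an arbitrary $m$-element subset $S \subseteq S'$; this is possible because $|S'| \ge m$. Since being $(k,t-1)$-evasive is trivially preserved under taking subsets, $S$ itself is $(k,t-1)$-evasive, hence no $k$-flat of $\AG(n,2)$ intersects $S$ in exactly $t$ points. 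This $S$ is the required witness that $[n,m] \not\rightarrow [k,t]$.

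There is no genuine obstacle: the corollary is essentially a relabeling of Theorem~\ref{Guru}, translating an upper bound $c$ on all $k$-flat intersection sizes into the avoidance of the specific value $t = c+1$. The only bookkeeping is to confirm that the constant $C$ in Theorem~\ref{Guru} depends only on $k$ and $c$, hence only on the fixed pair $(k,t)$, matching the statement of the corollary.
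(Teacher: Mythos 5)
Your proof is correct and is exactly the argument the paper intends: apply Theorem~\ref{Guru} with $c = t-1$ over $\F_2$, note that a $(k,t-1)$-evasive set (and any subset of it) admits no $k$-flat meeting it in exactly $t$ points, and take an $m$-element subset as the witness. The paper states this as an immediate corollary without a written proof, and your derivation fills in precisely the intended reasoning, including the necessity of $t>1$ so that $c=t-1$ is a valid positive parameter.
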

	
	Surprisingly it turned out that the obtained bounds are sharp in a weak sense. Improving the results of Ben-Aroya and Shinkar \cite{Ben}, Sudakov and Tomon proved the following. 
	
	\begin{theorem}[Sudakov and Tomon, \cite{Sudakov}]
		Let $\ff$ be a field, $k\in \mathbb{Z}^+$ and $\eps \in (0, 0.05)$. If $n$ is sufficiently large with respect to $k$, and $S\subseteq \ff^n$ has size $m\geq |\ff|^{n(1-\eps)}$, then $S$ is not $(k, \frac{k-\log_2(1/\eps)}{8\eps})$-subspace evasive. 
	\end{theorem}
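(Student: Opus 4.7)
The plan is an iterative density-increment argument via hyperplane slicing. Write $q=|\ff|$ and suppose for contradiction that $S\subseteq \ff^n$ satisfies $|S|\ge q^{n(1-\eps)}$ while every $k$-dimensional affine subspace meets $S$ in at most $c=(k-\log_2(1/\eps))/(8\eps)$ points. The goal is to construct a $k$-flat that meets $S$ in strictly more than $c$ points.

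I would build a descending chain of affine subspaces $V_0\supseteq V_1\supseteq\cdots\supseteq V_{n-k}$ with $\dim V_i=n-i$, greedily chosen so that $|S\cap V_i|$ stays as large as possible: at step $i$ partition $V_{i-1}$ into the $q$ parallel cosets of a well-chosen hyperplane direction, and let $V_i$ be the coset with largest $S$-intersection. A bare pigeonhole yields $|S\cap V_i|\ge q^{-1}|S\cap V_{i-1}|$, which compounds to $|S\cap V_{n-k}|\ge q^{k-n\eps}$ --- useless whenever $n\eps>k$. The engine of the proof is therefore a dichotomy: at every intermediate step, either some hyperplane direction yields a superaverage coset beating the mean by a factor $1+\delta_i$ strictly greater than $1$, producing a genuine density increment, or else $S\cap V_{i-1}$ is so close to being equidistributed over every coset system that a second-moment / Cauchy--Schwarz estimate immediately locates a $k$-flat inside $V_{i-1}$ with abnormally many points of $S$.

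Combining the evasiveness hypothesis $|S\cap V_{n-k}|\le c$ with the iterated lower bound $|S\cap V_{n-k}|\ge q^{-(n-k)}|S|\cdot\prod_i(1+\delta_i)$, the accumulated product must be small, which by the dichotomy forces the set to acquire enough structure that the Cauchy--Schwarz branch fires. The main obstacle I expect is quantifying the dichotomy precisely: showing that if no hyperplane direction supplies a $(1+\delta)$-gain then a character-sum / Fourier estimate forces a much denser coset, with density boost of order $\log(1/\eps)$ per step rather than just a constant. Amortising roughly $k/(8\eps)$ such steps against the initial density $q^{-n\eps}$ is what yields the stated threshold $(k-\log_2(1/\eps))/(8\eps)$; everything else is bookkeeping and a careful choice of the cutoff $\delta$ that separates the two branches of the dichotomy.
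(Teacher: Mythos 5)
The paper cites this result from Sudakov and Tomon \cite{Sudakov} and does not supply a proof, so there is no in-paper argument to compare against; your sketch has to be judged on its own terms, and it has a gap that is not merely a bookkeeping issue.

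The dichotomy you propose is wrong in its second branch as stated. You claim that if no hyperplane direction yields a genuine density increment, then $S\cap V_{i-1}$ is close to equidistributed over every coset system and a second-moment / Cauchy--Schwarz estimate ``immediately locates'' a $k$-flat with abnormally many points of $S$. But near-equidistribution is exactly the regime in which evasive sets \emph{live}: a pseudorandom subset of $\ff^n$ of density $q^{-\eps n}$ is, with high probability, nearly equidistributed across every coset system (Chernoff), and Guruswami's first-moment argument (Theorem \ref{Guru} in this paper) shows that such sets \emph{are} $(k, O(k/\eps))$-subspace evasive. So equidistribution does not immediately produce a dense $k$-flat; the point at which it stops producing one, around $c\approx k/\eps$, is precisely the quantitative content of the theorem, and your plan leaves it entirely inside the step you yourself flag as ``the main obstacle.'' Simply asserting a character-sum / Fourier estimate will force a denser coset is not an argument; for pseudorandom $S$ every Fourier coefficient is small and no such boost is available.

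The increment branch alone cannot rescue this. To raise the density from $q^{-\eps n}$ to roughly $c\cdot q^{-k}$ over $n-k$ hyperplane steps you need a multiplicative per-step gain of order $q^{\eps}\approx 1+\eps\ln q$, a fixed positive constant; a pseudorandom set offers only a vanishing $o(1)$ gain per slice, so the greedy density increment stalls exactly on the hard instances. Finally, the specific constants $8\eps$ and the $\log_2(1/\eps)$ correction in $c=\frac{k-\log_2(1/\eps)}{8\eps}$ are never derived, nor is it explained why the statement should be vacuous when $k\le \log_2(1/\eps)$. As written, the proposal is a plausible high-level frame whose central lemma is left as a conjecture and, in the form you state it, would actually be false.
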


	Note however that $S$ being non-$(k,c)$-evasive does not necessarily imply that $S$ contains a $k$-flat with $c+1$ points, except when $c=|\ff|^{n}-1.$ The latter case on the other hand is not covered by the theorem, as $\frac{k-\log_2(1/\eps)}{8\eps}<2^{k-3}$ holds for every $\eps \in (0, 0.05)$.
	
	\subsection{Our main results}
	
	Similarly to the investigation in \cite{EFRS}, we define the set of forcing sizes $m$ with respect to $[k,t]$, which are the sizes for which a $[k,t]$-flat is unavoidable, as follows.
	
	\begin{definition}
		$$Sp(n;k,t):=\{m: [n,m]_q\rightarrow [k,t] \}$$ is called the \textit{set of forcing sizes with respect to $n, k, t$}, and we refer to it as the \textit{$(n;k,t)$-spectrum}.\\
		$$\rho(n;k,t):= \frac{|Sp(n;k,t)|}{|\ff|^n}$$ is the \textit{density of the spectrum}.
	\end{definition}
	
	(We will always assume that $k\le n$, even if it is not specifically mentioned.)
	
	Our aim is to characterize the spectra or at least bound the density of the spectra for various values of $k$ and $t$. Note that from now on, $\ff$ is considered to be the binary field.
	
	In their paper \cite{EFRS}, Erdős, Füredi, Rothschild and T. Sós construct some graph families which only induce $k$-vertex subgraphs of specific sizes. These families  are the combination of graphs which are very dense (cliques, or graphs close to cliques in edit distance) and sparse graphs (forests, or graphs with high girth).
	
	In that aspect, we follow their footsteps and provide point sets which are combinations of disjoint full flats and point sets which hit every $k$-flat in only a small number of points.
	
	These constructions show that the binary representation of $m$ and $t$ will be key in our results concerning sizes $m$ not in the spectrum. 
	
	\begin{theorem}\label{binalak_intro}	 
		If $0<t<2^k$ then $Sp(n;k,t)$ will not contain those values $m<2^n$ which have fewer $1$-digits than the number of $1$-digits of $t$ in the binary representation, denoted by $s_2(t)$. Since $Sp(n;k,t)$ cannot contain $2^n$ either, this means that at least $$1+\sum_{i=0}^{s_2(t)-1} \binom{n}{i}$$ values are missing from the $(n;k,t)$-spectrum.
	\end{theorem}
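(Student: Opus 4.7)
The plan is to construct, for every $m>0$ with $s_2(m)<s_2(t)$, an explicit set $S\subseteq \AG(n,2)$ of size $m$ that meets no $k$-flat in exactly $t$ points, and to handle the two extremal values $m=0$ and $m=2^n$ separately.

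Write $m=2^{a_1}+2^{a_2}+\cdots+2^{a_r}$ with $a_1>a_2>\cdots>a_r\geq 0$, so $r=s_2(m)$. Identifying the points of $\AG(n,2)$ with the integers $\{0,1,\ldots,2^n-1\}$ via binary expansion, I would take $S$ to be the initial segment $\{0,1,\ldots,m-1\}$, which decomposes naturally as a disjoint union $F_1\sqcup\cdots\sqcup F_r$ of flats with $\dim F_i=a_i$: namely, $F_i$ is the set of integers in $\bigl[\sum_{j<i}2^{a_j},\,\sum_{j\le i}2^{a_j}\bigr)$, and since $a_i<a_{i-1}$ the low $a_i$ bits of the left endpoint vanish, so $F_i$ is a coset of the coordinate subspace on the $a_i$ lowest-order positions. (Alternatively, the existence of such a packing follows by an easy induction on $n$, using that $\AG(n,2)$ splits into two parallel $(n-1)$-flats.)

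For an arbitrary $k$-flat $F$, each intersection $F\cap F_i$ is either empty or an affine subspace, so $|F\cap F_i|\in\{0\}\cup\{2^{d_i}:0\le d_i\le \min(k,a_i)\}$. Therefore
\[
|F\cap S|\;=\;\sum_{i=1}^{r}|F\cap F_i|
\]
is a sum of at most $r$ powers of $2$, and by the sub-additivity $s_2(x+y)\le s_2(x)+s_2(y)$ we obtain
\[
s_2(|F\cap S|)\;\le\;r\;=\;s_2(m)\;<\;s_2(t),
\]
hence $|F\cap S|\ne t$. Thus $m\notin Sp(n;k,t)$ for every such $m$.

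For the count: the number of integers in $[0,2^n-1]$ with $s_2<s_2(t)$ is $\sum_{i=0}^{s_2(t)-1}\binom{n}{i}$, and the case $m=0$ is handled by the vacuous $r=0$ construction $S=\emptyset$. The value $m=2^n$ forces $S=\AG(n,2)$, for which every $k$-flat intersects $S$ in $2^k\ne t$ points (using $0<t<2^k$), so this value is also missing and contributes the extra $+1$, yielding the claimed total of $1+\sum_{i=0}^{s_2(t)-1}\binom{n}{i}$ missing values. The only mildly delicate step is verifying the disjoint flat packing, but this is essentially bookkeeping on the binary digits; the heart of the argument is the sub-additivity of $s_2$ together with the fact that the intersection of two affine subspaces of $\AG(n,2)$ has size $0$ or a power of $2$.
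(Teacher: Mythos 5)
Your proof is correct and follows essentially the same route as the paper: the set $S=\{0,\dots,m-1\}$ is exactly the paper's lexicographic construction $\mathcal{L}_m$, the decomposition into $s_2(m)$ flats is the same, and the key step is the same sub-additivity bound $s_2(|F\cap S|)\le s_2(m)$ (the paper's Lemma \ref{bin_sum}), with the two boundary values $m=0$ and $m=2^n$ handled the same way to reach the stated count.
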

	
	In fact, another numeral system, the so-called  canonical signed-digit binary representation of integers will also have a central role. We formulate the corresponding Proposition \ref{th_signed} in Section \ref{evasive_lexico_section}, which is the analogue of Theorem \ref{binalak_intro}.
	
	These results can be combined with the former results on $(k, c)$-subspace evasive sets. Indeed, if we take a point set $S_0$ which avoids $k$-dimensional subspaces having intersection of size $r$ for all $r\in [t_1, t_2]$
	with $c\le t_2-t_1$, then the union of $S_0$ and a $(k, c)$-subspace evasive set $S_1$ will avoid $k$-dimensional subspaces with intersections of size $t\in [t_1+c, t_2]$. 
	
	As we have seen in Corollary \ref{intro_evasive_cor}, subsets of large evasive sets provide point sets of various sizes  which do not induce $[k,t]$-flats for large enough $t$. We improve upon the probabilistic construction of Guruswami in order to obtain larger point sets and to apply the result for every $t\geq k+1$.

	\begin{theorem}[General improved lower bound on evasive sets]\label{ujgur_gen}
		Let $k,c\in \zz^{+}$ with $c\ge k+1$. Then for  $n>k$, there exists a $(k,c)$-evasive set in $\ff_q^n$ of size at least $$ K\cdot q^{n\left(1-\frac{k}{c}\right)}$$ for some constant $K$  independent of $n$.
	\end{theorem}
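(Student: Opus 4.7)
The plan is to prove the bound via the alteration (deletion) method, sharpening the first-moment argument behind Theorem~\ref{Guru}. I will fix $q, k, c$ with $c \geq k+1$ and $n > k$, then independently include each point of $\ff_q^n$ in a random set $R$ with probability $p \in (0,1)$, to be chosen later; this gives $\mathbb{E}[|R|] = p q^n$. Letting $Y$ count the pairs $(F, T)$ where $F$ is a $k$-flat, $T \subseteq F$ has size $c+1$, and $T \subseteq R$, removing one point from each such subset $T$ produces a set $S \subseteq R$ of size at least $|R| - Y$ in which no $k$-flat contains more than $c$ points, i.e., $S$ is $(k,c)$-evasive. Thus it suffices to lower bound $\mathbb{E}[|R|] - \mathbb{E}[Y]$ and invoke the first-moment principle.

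For the expectations, recall that the number of $k$-flats in $\ff_q^n$ equals $N_k = q^{n-k}\binom{n}{k}_q$, and since $\binom{n}{k}_q$ is of order $q^{k(n-k)}$ (with implicit constant depending only on $q$ and $k$), we have $N_k \leq D \cdot q^{(k+1)(n-k)}$ for a constant $D = D(q,k)$. Each $k$-flat contains $\binom{q^k}{c+1}$ subsets of size $c+1$, each occurring in $R$ with probability $p^{c+1}$, so writing $A := N_k\binom{q^k}{c+1}$, linearity gives $\mathbb{E}[Y] = A p^{c+1}$ and hence
\[
\mathbb{E}[|R|] - \mathbb{E}[Y] \;\geq\; pq^n - A p^{c+1}.
\]
Differentiating in $p$, the maximum is attained at $p^* = \bigl(q^n/((c+1)A)\bigr)^{1/c}$, with value $\tfrac{c}{c+1}\, p^* q^n$. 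Using $A \leq D'\cdot q^{(k+1)(n-k)}$ for a constant $D' = D'(q,k,c)$ and substituting,
\[
\tfrac{c}{c+1}\, p^* q^n \;\geq\; K_0\cdot q^{(n(c+1)-(k+1)(n-k))/c} \;=\; K_0 \cdot q^{k(k+1)/c}\cdot q^{n(1-k/c)},
\]
once one uses the elementary identity $n(c+1) - (k+1)(n-k) = n(c-k) + k(k+1)$. Absorbing the $n$-independent factor into the constant then gives a $(k,c)$-evasive set of size at least $K \cdot q^{n(1-k/c)}$, as required.

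The hypothesis $c \geq k+1$ is precisely what ensures $1 - k/c > 0$, so the bound is nontrivial; one should also verify $p^* \in (0,1)$, which holds for $n$ sufficiently large since $p^* = \Theta(q^{-k(n-k-1)/c}) \to 0$. Conceptually, the improvement from Guruswami's exponent $1-2k/c$ to $1-k/c$ comes from replacing the union bound (which demands $\mathbb{E}[Y] < 1$) by alteration (which subtracts $\mathbb{E}[Y]$ from $\mathbb{E}[|R|]$): this effectively doubles the usable exponent. The main obstacle I anticipate is merely careful asymptotic bookkeeping of the constants $D$ and $\binom{q^k}{c+1}$ together with the optimization, all of which is routine; no new structural idea beyond the switch to alteration is required.
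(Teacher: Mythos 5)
Your proof is correct and takes essentially the same approach as the paper: a deletion/alteration argument, bounding the expected number of bad $(c+1)$-subsets inside $k$-flats via $\binom{n}{k}_q = \Theta(q^{k(n-k)})$, and optimizing the density parameter to get the exponent $n(1-k/c)$. The only cosmetic difference is that you use the binomial random model (include each point independently with probability $p$) while the paper samples a uniformly random $m$-subset and optimizes over $m$; the computations and constants are the same up to routine bookkeeping.
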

	
	The combination of these yields long intervals of missing values from the spectrum $Sp(n; k,t)$.
	
	\begin{theorem}\label{combination_final}
		Let $k$ and $t$ be fixed integers such that $1\leq t\leq 2^k-1$ and suppose that the difference between the number of nonzero digits of $t$ and $m$ is greater than $\left\lceil \frac12\left(\log_2 c+1\right)\right\rceil$ for some positive integers $m$ and $c$ in the so-called canonical signed-digit binary representation.
		Then $[n,m']\not\to [k,t]$ holds for every $m'\in \left[m-\left(K\cdot \frac{m}{2^n}\cdot 2^{n\left(1-\frac{k}{c}\right)}\right), m+\left(K\cdot \frac{2^n-m}{2^n}\cdot 2^{n\left(1-\frac{k}{c}\right)}\right)\right]$, where $K$ is a positive constant depending only on $k$ and $c$.
	\end{theorem}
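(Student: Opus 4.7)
The plan is to combine the canonical signed-digit construction of Proposition~\ref{th_signed} with the improved evasive set of Theorem~\ref{ujgur_gen}, gluing the two together by a translate-averaging argument.

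Write $w(\cdot)$ for the number of nonzero digits of a nonnegative integer in its canonical signed-digit (CSD) binary representation, and set $d := \lceil \tfrac{1}{2}(\log_2 c + 1) \rceil$, so that by hypothesis $w(t) - w(m) \ge d+1$. Invoking the construction behind Proposition~\ref{th_signed}, we obtain a set $S_0 \subseteq \AG(n,2)$ with $|S_0|=m$, assembled as a signed union of $w(m)$ affine subspaces $F_1,\dots,F_{w(m)}$ coming from the CSD expansion $m=\sum_i \epsilon_i 2^{a_i}$. For any $k$-flat $T$, each $T\cap F_i$ is either empty or an affine subspace, so $|T\cap F_i|\in\{0,\,2^{d_i}\}$; consequently $|T\cap S_0| = \sum_i \epsilon_i|T\cap F_i|$ admits a signed-binary representation with at most $w(m)$ terms, and hence $w(|T\cap S_0|)\le w(m)$. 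Subadditivity of $w$ applied to $t = |T\cap S_0| + (t - |T\cap S_0|)$ then gives $w(t - |T\cap S_0|) \ge w(t) - w(m) \ge d+1$. A short extremal calculation shows that the smallest positive integer of CSD-weight $d+1$ equals $(2\cdot 4^d + 1)/3$, which by the choice of $d$ is at least $(4c+1)/3 > c$. Therefore $\bigl||T\cap S_0| - t\bigr| > c$ for every $k$-flat $T$.

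Next, let $E \subseteq \AG(n,2)$ be a $(k,c)$-evasive set with $|E| \ge K\cdot 2^{n(1-k/c)}$ provided by Theorem~\ref{ujgur_gen}. Every translate $E+v$ is again $(k,c)$-evasive, and the double count $\sum_{v\in\AG(n,2)}|(E+v)\cap S_0| = |E|\cdot |S_0|$ produces vectors $v_+, v_- \in \AG(n,2)$ with $|(E+v_+)\setminus S_0| \ge |E|(2^n-m)/2^n$ and $|(E+v_-)\cap S_0| \ge |E|\cdot m/2^n$. For $m' \in [m,\,m + |E|(2^n-m)/2^n]$, pick any $E^+\subseteq (E+v_+)\setminus S_0$ with $|E^+| = m'-m$ and set $S_1 := S_0 \cup E^+$; for $m' \in [m - |E|\cdot m/2^n,\, m]$, pick $E^-\subseteq (E+v_-)\cap S_0$ with $|E^-| = m-m'$ and set $S_1 := S_0\setminus E^-$. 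Either way $|S_1| = m'$, and since $E^\pm$ is contained in a $(k,c)$-evasive set, $|T\cap S_1| \in [|T\cap S_0| - c,\, |T\cap S_0| + c]$ for every $k$-flat $T$, an interval that misses $t$ by the bound of the previous paragraph. Hence $[n,m']\not\to [k,t]$ throughout the claimed window.

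The main obstacle is the CSD-weight control in the second paragraph: one must draw on the explicit structure underlying Proposition~\ref{th_signed} --- specifically, that the flats indexed by negative CSD digits can be realized as nested inside those indexed by positive digits --- to guarantee that the intersection with every $k$-flat is actually the signed sum of $w(m)$ affine-subspace intersections, rather than merely being bounded by it. Once this structural input is available, the CSD subadditivity bound, the extremal-value computation $(2\cdot 4^d + 1)/3 > c$, and the translate-averaging producing $v_\pm$ are all routine, and the constant $K$ of the conclusion coincides with the constant supplied by Theorem~\ref{ujgur_gen}.
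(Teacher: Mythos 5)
Your proposal is correct and follows essentially the same strategy as the paper's (which splits the claim into Theorems~\ref{combination_addition} and~\ref{combin_diff}): the lexicographic construction $\mathcal{L}_m$ with its signed flat decomposition, CSD subadditivity (Lemma~\ref{CSDbin_sum}), the improved evasive-set bound of Theorem~\ref{ujgur_gen}, and the translate-averaging step producing $v_\pm$. The only cosmetic differences are that you prove the two-sided bound $\bigl||T\cap\mathcal{L}_m|-t\bigr|>c$ in one pass where the paper handles addition and deletion separately, and you phrase the key numerical estimate via the minimum positive integer of CSD weight $d+1$ rather than via the maximum CSD weight of integers in $[0,c]$ --- these are equivalent formulations.
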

	
	Our next goal is to prove results on long intervals the spectrum $Sp(n;k,t)$ must contain for certain special values of $t$. Our first results in this direction concern the case when $t$ is a power of $2$.
	
	\begin{theorem}\label{2hatvany} Suppose that $k>\ell\ge 1$ are integers. Then there exist constants $C>0$ and $0<D<1$ depending on $k,\ell$ such that $[n,m]\rightarrow \left[k, 2^{k-\ell}\right]$ for $m\in   \left[C\cdot2^{n\left(1-\frac{1}{2^{k-\ell-1}}\right)}, D\cdot 2^n\right]$.
		Moreover, for each $\eps>0$ and sufficiently large $n$, $$\rho(n;k,2^{k-\ell})\ge \frac{1-\eps}{2^{\ell-1}}.$$
	\end{theorem}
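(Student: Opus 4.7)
The plan is to establish $[n,m]\to [k,2^{k-\ell}]$ by producing, for every $m$-set $S$, a $k$-flat whose intersection with $S$ lies strictly above $2^{k-\ell}$ and another whose intersection lies strictly below $2^{k-\ell}$, and then running a discrete intermediate-value argument through a chain of $k$-flats to force the intersection value $2^{k-\ell}$ itself to be attained.

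For the lower endpoint $m\ge C\cdot 2^{n(1-1/2^{k-\ell-1})}$, I would apply Theorem~\ref{Guru}'s converse, namely Theorem~1.3 (Sudakov--Tomon), with $\eps:=1/2^{k-\ell-1}$: the exponent $1-\eps$ matches the threshold, and the resulting bound $(k-\log_{2}(1/\eps))/(8\eps)=(\ell+1)\cdot 2^{k-\ell-4}$ exceeds $2^{k-\ell}$ in the relevant regime, producing a $k$-flat $F^{+}$ with $|S\cap F^{+}|>2^{k-\ell}$; boundary values of $\ell$ would be handled by applying the same theorem to the complement $\overline{S}$. For the upper endpoint $m\le D\cdot 2^n$, the average of $|S\cap F|$ over a uniformly chosen $k$-flat is $m\cdot 2^{k}/2^{n}\le D\cdot 2^{k}$, so taking $D<2^{-\ell}$ already supplies a flat $F^{-}$ with $|S\cap F^{-}|<2^{k-\ell}$. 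To push $D$ up to $1/2^{\ell-1}-\eps$ (needed for the density claim) requires a finer second-moment / hyperplane-slicing estimate controlling the variance of $|S\cap F|$.

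The delicate step is converting the existence of $F^{\pm}$ into an exact $[k,2^{k-\ell}]$-flat. Two $k$-flats in $\AG(n,2)$ either coincide in direction or meet in a flat of dimension at most $k-1$, so their symmetric difference has size at least $2^{k}$, and intersection sizes can jump by up to $2^{k}$ across a natural ``Grassmann'' edge, ruling out a direct IVT. I plan to work locally inside a $(k+1)$-flat $G=F_{1}\sqcup F_{2}$: such $G$ admits $2^{k+1}-1$ partitions into parallel $k$-flats, each obeying $|S\cap F_{1}|+|S\cap F_{2}|=|S\cap G|$. Expanding $\mathbf{1}_{S\cap G}$ on $G\cong\F_{2}^{k+1}$ and combining Parseval with the forced parity/divisibility of the Fourier coefficients $\hat{\mathbf{1}}_{S\cap G}(a)$ (due to $|S\cap G|$ being divisible by a power of $2$), I expect to show that whenever $|S\cap G|=2^{k-\ell+1}$, at least one partition is balanced, giving $|S\cap F_{1}|=|S\cap F_{2}|=2^{k-\ell}$. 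An induction on $\ell$ then reduces the problem to finding a $(k+\ell-1)$-flat $G$ with $|S\cap G|=2^{k-1}$, i.e.\ a balanced flat in a slightly larger ambient dimension; the base case $\ell=1$ is the existence of a balanced $k$-flat, where self-complementarity $|S\cap F|\leftrightarrow 2^{k}-|S\cap F|$ may be exploited.

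Finally, the density bound $\rho(n;k,2^{k-\ell})\ge (1-\eps)/2^{\ell-1}$ follows since the interval $[C\cdot 2^{n(1-1/2^{k-\ell-1})},D\cdot 2^{n}]$ has length asymptotic to $D\cdot 2^{n}$ with $D\to 1/2^{\ell-1}$ under the refined averaging argument, so dividing by $2^{n}$ yields the claim. The main obstacle I anticipate is the Fourier step in the walking argument: verifying that the divisibility hypothesis on $|S\cap G|$ genuinely suffices for a balanced partition to exist \emph{in every} such $G$, and that enough $(k+1)$-flats with the prescribed $|S\cap G|$ can be located so as to chain $F^{+}$ down to $F^{-}$ without the intersection size skipping over $2^{k-\ell}$.
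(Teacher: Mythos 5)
Your approach diverges fundamentally from the paper's, and unfortunately it has two gaps that I don't think can be patched within the proposed framework.

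First, the Sudakov--Tomon step does not deliver what you need in the relevant regime. Their theorem requires $\eps\in(0,0.05)$, so your choice $\eps=1/2^{k-\ell-1}$ forces $k-\ell\ge 6$; worse, even when it applies, the evasiveness parameter $(k-\log_2(1/\eps))/(8\eps)=(\ell+1)\cdot 2^{k-\ell-4}$ only exceeds $2^{k-\ell}$ when $\ell\ge 16$. Applying the theorem to $\overline S$ produces a flat with \emph{many} points of $\overline S$, i.e.\ few of $S$, which is a flat of type $F^-$, not $F^+$; so the complement trick does not rescue the construction of $F^+$. The paper does not need a flat with more than $2^{k-\ell}$ points at all -- it instead iteratively chooses directions $v_1,\dots,v_\ell$ so as to guarantee many cosets of $\langle v_1,\dots,v_\ell\rangle$ meeting $S$ in exactly one point, reduces to a subset $S'$ of $\ff_2^{n-\ell}$ of size $\ge \Omega(2^{n(1-1/2^{k-\ell-1})})$, and then invokes the already-established density threshold for $[k-\ell,2^{k-\ell}]$-flats (a Szemer\'edi-cube-type result) to find a full $(k-\ell)$-flat in $S'$, which lifts to a $[k,2^{k-\ell}]$-flat.

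Second, and more seriously, the intermediate-value chaining is the heart of the theorem, and your outline does not establish it. You correctly observe that a naive IVT fails because intersection sizes can jump by $2^k$, but the proposed repair -- chaining through $(k+1)$-flats and using Fourier/divisibility to force a balanced partition -- has two unverified claims, each of which is essentially as hard as the theorem itself: (i) that a $(k+1)$-flat $G$ with $|S\cap G|=2^{k-\ell+1}$ must admit a partition into parallel $k$-flats $F_1\sqcup F_2$ with $|S\cap F_1|=|S\cap F_2|=2^{k-\ell}$ (divisibility of $|S\cap G|$ and Parseval constrain the average of $|S\cap F|$ over partitions, but do not by themselves force equality at any single partition -- the distribution could be e.g.\ uniformly $\{2^{k-\ell}\pm 1\}$); and (ii) that a chain of such $(k+1)$-flats connecting $F^+$ to $F^-$ can be assembled so that the intersection sizes evolve without skipping $2^{k-\ell}$. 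Claim (ii) is never addressed -- you yourself flag it as the main obstacle -- and there is no obvious structure in $\AG(n,2)$ guaranteeing that such a chain exists. The reduction ``by induction on $\ell$ to finding a $(k+\ell-1)$-flat with $|S\cap G|=2^{k-1}$'' is similarly unjustified: it presupposes that the Fourier step at each level goes through.

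In short, the paper sidesteps the chaining problem entirely by never needing to walk between flats: it extracts a quotient set $S'$ of the right density and uses a one-shot application of the $[k-\ell,2^{k-\ell}]$-threshold. If you want to pursue the IVT strategy you would at minimum need a concrete supersaturation or connectivity statement about the graph of $k$-flats ordered by intersection size, and that is not supplied here.
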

	
	As for the case $\ell=0$, it can be shown analogously to the proof of Szemerédi's Cube Lemma (see \cite[Corollary 2.1]{Setyawan}) that in order for a set $S\subseteq \ff_2^n$ to induce a $[k,2^k]$-flat, it suffices for $\frac{|S|}{2^n}$ to be exponentially small, and so $\lim_{n\to \infty} \rho(n;k,2^k)=1$. For $\ell=1$, Theorem $\ref{2hatvany}$ implies that $\lim_{n\to \infty} \rho(n;k,2^{k-1})=1$ holds as well, and we have an even stronger result in this case:
	
	\begin{theorem}\label{halfflat} 
		Let $k\ge 2$. Then there exists a constant $C>0$ depending on $k$ such that $[n,m]~\rightarrow~ \left[k, 2^{k-1}\right]$ for $m\in   \left[C\cdot2^{n\left(1-\frac{1}{2^{k-2}}\right)}, 2^n-C\cdot 2^{n\left(1-\frac{1}{2^{k-2}}\right)}\right]$.
	\end{theorem}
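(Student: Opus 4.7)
\textbf{Proof plan for Theorem \ref{halfflat}.}

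The plan is to reduce the statement to the $\ell = 1$ case of Theorem \ref{2hatvany} via a complementation symmetry that is specific to the balanced value $t = 2^{k-1}$. For any $k$-flat $F$ and any $S \subseteq \ff_2^n$ with complement $\bar S := \ff_2^n \setminus S$, the identity $|F \cap S| + |F \cap \bar S| = |F| = 2^k$ gives $|F \cap S| = 2^{k-1}$ if and only if $|F \cap \bar S| = 2^{k-1}$. Hence $S$ induces a $[k, 2^{k-1}]$-flat if and only if $\bar S$ does, so
\[
[n, m] \rightarrow [k, 2^{k-1}] \quad \Longleftrightarrow \quad [n, 2^n - m] \rightarrow [k, 2^{k-1}].
\]

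Next, setting $\ell = 1$ in Theorem \ref{2hatvany} would produce constants $C_0 > 0$ and $D \in (0,1)$ such that $[n,m] \rightarrow [k, 2^{k-1}]$ for every $m$ in the interval
\[
I := \left[C_0 \cdot 2^{n(1-1/2^{k-2})},\ D \cdot 2^n\right].
\]
Together with the equivalence above, the forcing relation then also holds for every $m$ in the reflected interval $2^n - I = \left[(1-D) \cdot 2^n,\ 2^n - C_0 \cdot 2^{n(1-1/2^{k-2})}\right]$. Provided $D \geq 1/2$ (so that $1 - D \leq D$), the two intervals overlap around the midpoint $2^n/2$, and their union is exactly $\left[C_0 \cdot 2^{n(1-1/2^{k-2})},\ 2^n - C_0 \cdot 2^{n(1-1/2^{k-2})}\right]$. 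Setting $C := C_0$ then gives the theorem.

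The only delicate point — and the main obstacle I foresee — is ensuring $D \geq 1/2$ when invoking Theorem \ref{2hatvany}. The ``Moreover'' clause of that theorem, which asserts $\rho(n; k, 2^{k-1}) \geq 1 - \varepsilon$ for any fixed $\varepsilon > 0$ and large $n$, strongly indicates that $D$ can in fact be taken arbitrarily close to $1$, so this constant should simply fall out of the proof of Theorem \ref{2hatvany}. Alternatively, one can sidestep the issue by re-running the argument of Theorem \ref{2hatvany} on whichever of $S$ and $\bar S$ has size at most $2^n/2$; by the symmetry step above, a $[k, 2^{k-1}]$-flat induced by the smaller set is equivalent to one induced by $S$, which effectively pushes $D$ up to $1/2$ for free.
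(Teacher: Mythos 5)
Your proof is correct, and it takes a genuinely different route from the paper's own argument. The paper proves Theorem~\ref{halfflat} directly: it counts ordered pairs $(s,s^*)$ with $s\in S$, $s^*\in\overline{S}$, finds a direction $d$ realized by at least $m(2^n-m)/2^n$ such pairs, forms the set $S_1\subseteq\ff_2^n/\langle d\rangle$ of $\langle d\rangle$-cosets meeting $S$ in exactly one point, and then applies Theorem~\ref{order2} to $S_1$. The symmetry in $m\leftrightarrow 2^n-m$ is built directly into the expression $m(2^n-m)$, which is why the interval comes out symmetric around $2^n/2$ for free. You instead take Theorem~\ref{2hatvany} (equivalently \ref{induced_power2_flat}) at $\ell=1$ as a black box, which yields the one-sided interval $\left[C_0\cdot2^{n(1-1/2^{k-2})},(1-\eps)2^n\right]$, and then reflect it using the complementation observation (the $t=2^{k-1}$ case of Lemma~\ref{complementing}, which is an involution of the spectrum). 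Your ``delicate point'' about $D\ge 1/2$ is indeed handled by the ``Moreover'' clause: taking, say, $\eps=1/4$ gives $D=3/4$ with a corresponding absolute $C_0$, and the two reflected intervals then overlap as you describe. What your approach buys is modularity: once \ref{2hatvany} is known, \ref{halfflat} follows from a one-line symmetry. What the paper's direct proof buys is self-containment and a cleaner logical order — in fact, in the paper's exposition the proof of Theorem~\ref{induced_power2_flat} (and hence of \ref{2hatvany}) \emph{cites} the first step of the proof of~\ref{halfflat} (the choice of $v_1$ achieving $F_{1,1}(v_1)\ge m(2^n-m)/2^n$), so literally invoking \ref{2hatvany} as a black box here would be circular as the paper is organized. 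Logically there is no circularity — that first step is elementary and independent — but your proof would require reordering or inlining that step to stand alone.
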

	
	A work closely related to the case of full flats ($\ell=0$) is due to Etzion and Vardy \cite{Etzion}, who determined the minimal number of points needed in $\AG(n,2)$ (and in general in $\AG(n,q)$) in order to cover each subspace of a given dimension $k$, under the name of $q$-Turán designs $\mathcal{T}_q(n,k,1)$. Their result $\mathcal{T}_q(n,k,1)=\frac{q^{n-k+1}-1}{q-1}$ highlights how much harder it is to avoid a complete $k$-flat in our affine variant compared to theirs.
	
	As we have seen, if $t$ is not a power of $2$, then we must expect that the spectrum is \textit{scattered}, several values of $m$ are missing from it.
	
	For $k\le 2$ we have complete characterizations of the spectra, except for the case $[k,t]=[2,4]$ which corresponds to Sidon sets of binary spaces. We present the characterization and discuss the results related to the exceptional case in Section \ref{small_cases_section}. 
	
	Finally, we discuss a case when $t$ is the sum of two consecutive powers of $2$. This case is significantly more involved compared to the case of $t=2^{k-\ell}$.
	
	\begin{theorem}\label{3times2power}
		For every pair $(k,\ell)$ of integers with $2\le \ell\le k-1$, the density of integer values $m$ within the interval $ \left[0, \frac{1}{2^{\ell-1}}\cdot 2^n\right]$ for which  $[n,m]\to [k,3\cdot 2^{k-\ell}]$ holds, tends to $1$ as $n\to \infty$. Hence $\rho(n; k, 3\cdot 2^{k-\ell})\geq \frac{1}{2^{\ell-1}}.$
	\end{theorem}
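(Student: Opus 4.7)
The plan exploits the identity $3\cdot 2^{k-\ell}=2^{k-\ell+1}+2^{k-\ell}$: we search for a $k$-flat that splits as the disjoint union $F=F_1\sqcup F_2$ of two parallel $(k-1)$-flats with $|S\cap F_1|=2^{k-\ell+1}$ and $|S\cap F_2|=2^{k-\ell}$, so that $|S\cap F|=3\cdot 2^{k-\ell}$.

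First, I would produce a candidate $F_1$. For $\ell\ge 3$, apply Theorem \ref{2hatvany} at dimension $k-1$ with parameter $\ell-2$: its forcing interval covers all but an $o(1)$-fraction of the range $[0,2^{n-\ell+1}]$, and within any $S$ of such a size one obtains a $(k-1)$-flat $F_1$ with $|S\cap F_1|=2^{(k-1)-(\ell-2)}=2^{k-\ell+1}$. The boundary case $\ell=2$ instead requires a full $(k-1)$-flat inside $S$; this is provided by the Szemerédi cube lemma analogue (equivalently, the $\ell=1$ case of Theorem \ref{2hatvany}) for all but vanishing density of $m$ in the target interval.

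Second, with $F_1$ fixed, I need a $(k-1)$-flat $F_2$ parallel to $F_1$ satisfying $|S\cap F_2|=2^{k-\ell}$. Passing to the quotient $\ff_2^n/\mathrm{dir}(F_1)\cong\ff_2^{n-k+1}$, the $2^{n-k+1}-1$ remaining parallels become weighted points, whose weights sum to $m-2^{k-\ell+1}$ and average at most $2^{k-\ell}$. A second-moment / double-counting estimate then forces weights both above and below $2^{k-\ell}$ to occur for generic $m$. To hit the value $2^{k-\ell}$ exactly, I would exploit the freedom to choose $F_1$ among the positive-density family guaranteed by Theorem \ref{2hatvany}, averaging / union-bounding over this family. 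Combined with Theorem \ref{binalak_intro} to restrict pathological $m$'s to those with too few $1$-digits in their binary (or CSD) expansion, this confines the exceptional set to density $o(1)$ inside $[0,2^{n-\ell+1}]$; dividing by $2^n$ then yields $\rho(n;k,3\cdot 2^{k-\ell})\ge 1/2^{\ell-1}$.

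The main obstacle is the discrete nature of the family of parallel $(k-1)$-flats: two adjacent parallels may differ in intersection size by more than $1$, so no direct intermediate-value argument produces an $F_2$ attaining $2^{k-\ell}$ from parallels above and below. Bridging this gap requires either a quantitative anti-concentration estimate on the weight distribution in the quotient, or a rigidity argument showing that any $S$ whose parallels through every admissible $F_1$ all skip $2^{k-\ell}$ must be highly structured, forcing $m$ into a sparse set. Either route is what underwrites a density-tending-to-$1$ conclusion rather than a universal forcing statement, and is precisely where I expect the bulk of the work to lie.
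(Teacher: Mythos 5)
Your decomposition $3\cdot 2^{k-\ell}=2^{k-\ell+1}+2^{k-\ell}$ into two parallel $(k-1)$-flats is a genuinely different route from the paper's. The paper instead works from below: it relates the number of $[2,3]$-flats to additive energy by $F_{2,3}=\frac16\bigl(m^3-E(S)\bigr)$ (Lemma~\ref{23flats_energy}), proves a nontrivial upper bound on $E(S)$ for $m$ satisfying a residue condition (Proposition~\ref{energybound}, whose proof rests on hypercube edge-boundary estimates), extends a typical $[2,3]$-flat to an $[\ell,3]$-flat by the iterated direction-selection of Theorem~\ref{induced_power2_flat}, and then applies Theorem~\ref{order2} in the quotient to lift a full $(k-\ell)$-flat of $3$-point cosets to the desired $[k,3\cdot 2^{k-\ell}]$-flat.

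The gap you flag in your own plan is real and is the whole ballgame. Once $F_1$ is fixed, nothing in the hypotheses forces a parallel $(k-1)$-flat to meet $S$ in exactly $2^{k-\ell}$ points; a second-moment bound yields parallels with weights above and below $2^{k-\ell}$, but the weight multiset need not take any prescribed intermediate value. Your proposed remedy via Theorem~\ref{binalak_intro} is also misdirected: for $t=3\cdot 2^{k-\ell}$ we have $s_2(t)=2$, so that theorem merely rules powers of two (and $0$) out of the spectrum and provides no mechanism for producing a flat of the required intersection size for the remaining $m$. In the paper, the density-tending-to-$1$ conclusion comes from a concrete arithmetic anti-concentration hypothesis on $m$ --- a lower bound on the least absolute residue $r_{2^{\lceil(\alpha+\eps)t\rceil}}(m)$ --- which both powers the energy bound (via Proposition~\ref{manyrichdiff} and the hypercube-cut estimates) and is satisfied by all but a vanishing fraction of $m$ in each dyadic block. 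Your route lacks any analogous lever tying the arithmetic of $m$ to the existence of $F_2$, which is why it stalls exactly where you say it does; to make it work one would still need something like the paper's anti-concentration input, at which point the parallel-flat decomposition does not appear to simplify matters.
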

	
	In general, we have the following conjecture.
	
	\begin{conj}\label{maincon} 
		$$\lim_{n\to \infty} \rho(n;k,t)=1.$$		
	\end{conj}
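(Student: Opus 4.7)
The plan is to bound the density of ``bad'' sizes $\cB(n;k,t) := \{m\in [0,2^n] : [n,m]\not\to [k,t]\}$ by showing its cardinality is $o(2^n)$. The paper already identifies the two mechanisms that produce bad $m$: (i) digit obstructions coming from the binary or canonical signed-digit representation (Theorem \ref{binalak_intro} and Proposition \ref{th_signed}), which contribute a polynomial $O(n^{s_2(t)-1})$ number of ``seed'' values; and (ii) subspace-evasive neighborhoods around such seeds of radius $\lesssim 2^{n(1-k/c)}$ (Theorem \ref{combination_final}). The plan is to prove that these are essentially the \emph{only} sources of bad values, so that $|\cB(n;k,t)| = O(n^{s_2(t)-1}\cdot 2^{n(1-k/c)}) = o(2^n)$ for suitable $c$.

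The main steps I would attempt are the following. First, induct on $s = s_2(t)$, with base cases $s=1$ (powers of two) and $s=2$ covered by Theorems \ref{2hatvany}, \ref{halfflat} and \ref{3times2power}. For the inductive step, write $t = t' + 2^i$ with $s_2(t') = s-1$. Given an $m$-element set $S$, apply the inductive hypothesis to a codimension--$1$ restriction to obtain a $(k-1)$-flat $H^*$ with $|S\cap H^*| \in \{t' - 2^{i-1}, t', t' + 2^{i-1}\}$ or similar target values. Then extend $H^*$ to a $k$-flat by adjoining a direction, and use an averaging/second-moment argument over the $2^{n-k+1}$ possible extensions to show that, for most $m$, some extension yields exactly $t$. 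Second, to rule out the remaining bad $m$, develop a structural/stability theorem: if $S$ avoids every $[k,t]$-flat, then $S$ is close (in symmetric difference, up to an evasive correction of size $O(2^{n(1-k/c)})$) to a disjoint union of flats whose total size has prescribed CSD digits. Third, enumerate the $m$ arising in this way; their number is polynomial in $n$ times the evasive error, giving $o(2^n)$.

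The hard part will be the structural/stability theorem in step two. Forbidding one particular intersection value $t$ is a single equation-like constraint on the indicator of $S$, and unlike classical polynomial-method settings (Croot--Lev--Pach, cap sets) the condition $|S\cap H| \ne t$ does not factor through an obvious algebraic identity, especially when $t$ is not a power of two. A natural tool is Fourier analysis on $\ff_2^n$, expressing the number of $[k,t]$-flats as $\sum_{H} \prod_{j\ne t}(|S\cap H| - j)$ and extracting rigidity from the vanishing of this sum; but converting a vanishing moment into a geometric description of $S$ seems to require new ideas. A more modest fallback is to prove the conjecture first for all $t$ with $s_2(t) \le 3$ by carefully extending the construction-cum-averaging argument of Theorem \ref{3times2power}, then to leverage duality $S\leftrightarrow \AG(n,2)\setminus S$ to reach $t$ with $s_2(2^k - t)$ small, covering the conjecture on both ends of the interval $[0,2^k]$ before attacking the genuinely middle-range $t$.
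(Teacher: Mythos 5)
This statement is a \emph{conjecture} in the paper; the authors offer no proof of it, so there is no ``paper's own proof'' to compare against. Your submission is, by your own account, a research plan rather than a proof, and I will evaluate it as such.

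The most concrete error is in your claimed base case. You write that $s_2(t)\in\{1,2\}$ is ``covered by Theorems \ref{2hatvany}, \ref{halfflat} and \ref{3times2power},'' but this is not what those theorems give. For $t = 2^{k-\ell}$ with $\ell\ge 2$, Theorem \ref{2hatvany} only yields $\rho(n;k,2^{k-\ell})\ge \frac{1-\eps}{2^{\ell-1}}$, which is bounded away from $1$; Theorem \ref{halfflat} handles only $\ell=1$; and Theorem \ref{3times2power} gives $\rho(n;k,3\cdot 2^{k-\ell})\ge\frac{1}{2^{\ell-1}}$, again not $1$ for $\ell\ge 2$. The remark after Theorem \ref{induced_power2_flat} shows why the argument stalls: a full $(n-\ell+1)$-flat of density $\frac{1}{2^{\ell-1}}$ induces no $[k,2^{k-\ell}]$-flat, so the argument via factoring out $\ell$ directions cannot push past that density, and nothing in the paper bridges the interval $\left(\frac{1}{2^{\ell-1}}\cdot 2^n,\, 2^n\right]$. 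So the conjecture is open already for $t = 2^{k-2}$, and your proposed induction on $s_2(t)$ begins from an unestablished base.

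The inductive step also has a gap. Suppose you find a $(k-1)$-flat $H^*$ with $|S\cap H^*| = t'$. Any extension to a $k$-flat $H\supset H^*$ satisfies $|S\cap H| = t' + |S\cap(H\setminus H^*)|$, and you need the second term to equal $2^i$ for at least one of the $2^{n-k+1}$ choices of $H$. A first- or second-moment argument controls the \emph{mean} and \emph{variance} of $|S\cap(H\setminus H^*)|$, not whether a specific value is hit, and $2^i$ need not be near the mean. The paper's successful arguments for $s_2(t)\le 2$ avoid this by iterating the quotient construction and invoking Theorem \ref{order2} at the end, rather than working one codimension at a time; it is unclear how that machinery extends past two nonzero digits. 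Finally, you correctly flag the structural/stability theorem as the crux, but note that the difficulty is worse than you state: the known ``bad'' $m$ are not merely a polynomial number of seeds with evasive halos, but include at least a full constant fraction of $[0,2^n]$ when $\ell\ge 2$ on current knowledge, so the target bound $|\cB(n;k,t)| = o(2^n)$ requires closing a gap that is currently of size $\Theta(2^n)$.
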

	
	The paper is organized as follows: in Section \ref{small_cases_section} we present notations and some tools we will use, then discuss the cases when $k$ is small. In Section \ref{evasive_lexico_section} we refine the probabilistic bound of Guruswami and prove Theorem \ref{ujgur_gen}. We continue by  collecting the results on sizes $m$ for which $[k,t]$-flats are avoidable. We discuss the case when $t$ is a power of $2$  in Section \ref{power2_flats_section}, and present the proof of Theorem \ref{2hatvany} and \ref{halfflat}.
	Then we relate the additive energy of  a set $S$ with the number of $[2,3]$-flats induced by $S$ in Section \ref{3power2_flats_section}. We prove bounds on the additive energy via 
	good estimates on the size of certain cuts in hypercube graphs, which enables us to 
	derive  Theorem \ref{3times2power}. Finally, the last section is devoted to concluding remarks and open problems. 
	
	\section{Preliminaries, small cases for $k$}\label{small_cases_section}
	
	\subsection{Notations and tools}
	For a pair of integers $a\le b$, $[a,b]$ will be used to refer to the set $\{a, a+1, ..., b\}\subseteq \zz$. 
	We use the standard additive combinatorial notation for the sumset $A+B$ of two subsets $A$ and $B$ of an abelian group $G$ as the set of all sums of an element from $A$ with an element from $B$, i.e., $A + B = \{a+b : a \in A, b \in B\}$. $A-B$ is defined similarly. If one of the sets consists of a single element, we omit the brackets for short.\\
	The complement of a set $S$ will be denoted by $\overline{S}$.
	
	It will be useful to define the \textit{$k$-dimensional profile $\pf(L)$} of a set $L\subseteq \ff_2^n$ as the set of all possible intersection sizes of $L$ and a $k$-flat of $\ff_2^n$. Thus we have  that $t\not \in \pf(L)$ implies $|L|\not \in Sp(n;k,t)$.
	We show a simple but useful lemma concerning the $k$-dimensional profiles.
	
	\begin{lemma}\label{profilos} \ {} 
		\vspace{-0.3cm}
		\begin{enumerate}[label=(\roman*)] 
			\item    Suppose that  $L_1, L_2\subseteq \ff_2^n$ are disjoint sets. Then $\pf(L_1\cup L_2)\subseteq \pf(L_1)+\pf(L_2)$. 
			\item  Suppose that  $L_1\subseteq L_2\subseteq \ff_2^n$.  Then $\pf(L_2\setminus L_1)\subseteq \pf(L_2)-\pf(L_1)$.
		\end{enumerate} 
	\end{lemma}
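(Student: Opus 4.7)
The plan is to unfold both statements directly from the definition of $\pf$ by picking, for any intersection size appearing in the left-hand side, a witnessing $k$-flat and inspecting how the intersection with that flat decomposes across the relevant partition.

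For part (i), I would start with an arbitrary $t \in \pf(L_1 \cup L_2)$ and take a $k$-flat $F$ realizing this intersection size, so that $|F \cap (L_1\cup L_2)| = t$. Since $L_1$ and $L_2$ are disjoint, inclusion-exclusion (or simply additivity of cardinality on disjoint unions) gives $t = |F \cap L_1| + |F \cap L_2|$. Setting $t_i := |F \cap L_i|$, by definition $t_i \in \pf(L_i)$, so $t = t_1 + t_2 \in \pf(L_1) + \pf(L_2)$, which is exactly the desired containment.

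For part (ii), the argument is symmetric. Pick $t \in \pf(L_2 \setminus L_1)$ witnessed by a $k$-flat $F$, so $|F \cap (L_2 \setminus L_1)| = t$. Using $L_1 \subseteq L_2$, we get $F \cap L_2 = (F \cap L_1) \sqcup (F \cap (L_2 \setminus L_1))$, hence $|F \cap L_2| = |F \cap L_1| + t$, i.e.\ $t = |F \cap L_2| - |F \cap L_1| \in \pf(L_2) - \pf(L_1)$.

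There is no real obstacle here; the lemma is a bookkeeping observation that intersection sizes with a fixed $k$-flat are additive under disjoint union and subtractive under set-difference. The only thing worth stressing in the write-up is that the same $F$ is used on both sides of each identity, so that the decomposition $t = t_1 + t_2$ (respectively $t = t_2 - t_1$) actually lies in the Minkowski sum (respectively difference) of the profiles. Consequently the contrapositive corollary $t \notin \pf(L) \Rightarrow |L| \notin Sp(n;k,t)$ can later be applied to $L = L_1 \cup L_2$ (respectively $L_2 \setminus L_1$) to rule out sizes from the spectrum by ruling out all sum (respectively difference) decompositions.
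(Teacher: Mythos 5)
Your proof is correct and uses exactly the same idea as the paper, which compresses the whole argument into one line: for any $k$-flat $H$, $|H\cap L_i|\in \pf(L_i)$, and the intersection sizes add (resp.\ subtract) along the disjoint union (resp.\ set difference). You have simply unfolded that observation, correctly emphasizing that the same flat $F$ witnesses both summands of the decomposition.
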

	
	\begin{proof}
		These immediately follow from the fact that for any $k$-flat $H$, $|H\cap L_i|\in \pf(L_i)$.
	\end{proof}
	
	The next lemma will be useful for inductive arguments.
	\begin{lemma}[Charbit et al., \cite{Charbit}]\label{char}
		Given any subset $S\subseteq \ff_2^n$ of size $m$, there exists a linear subspace $H\le \ff_2^n$ of dimension $n-1$ such that $|H\cap S|\in \left[\frac12m-\frac12\sqrt{m}, \frac12m+\frac12\sqrt{m}\right]$.
	\end{lemma}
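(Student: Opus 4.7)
My plan is to prove the lemma via Fourier analysis on $\ff_2^n$, combined with a single-line averaging argument coming from Parseval's identity. Identify each nonzero $\alpha \in \ff_2^n$ with the linear hyperplane $H_\alpha := \{x \in \ff_2^n : \alpha \cdot x = 0\}$; as $\alpha$ ranges over the $2^n-1$ nonzero vectors, the $H_\alpha$ range over precisely the $(n-1)$-dimensional linear subspaces. So the task reduces to producing one nonzero $\alpha$ for which $|H_\alpha \cap S|$ is close to $m/2$.

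The first step is to encode the intersection size as a Fourier coefficient. Put $f = \mathbf{1}_S$ and define $\widehat{f}(\alpha) = \sum_{x \in \ff_2^n} f(x) (-1)^{\alpha \cdot x}$. Using $\mathbf{1}[\alpha \cdot x = 0] = \tfrac12\bigl(1 + (-1)^{\alpha \cdot x}\bigr)$, I would immediately get
\[
|H_\alpha \cap S| \;=\; \frac{m + \widehat{f}(\alpha)}{2}.
\]
Hence the conclusion of the lemma is equivalent to finding some $\alpha \neq 0$ with $|\widehat{f}(\alpha)| \le \sqrt{m}$.

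The averaging step is then standard. Parseval yields $\sum_{\alpha \in \ff_2^n} |\widehat{f}(\alpha)|^2 = 2^n \cdot m$, and since $\widehat{f}(0) = m$ we obtain $\sum_{\alpha \neq 0} |\widehat{f}(\alpha)|^2 = m(2^n - m)$. Averaging over the $2^n - 1$ nonzero $\alpha$, some such $\alpha$ satisfies
\[
|\widehat{f}(\alpha)|^2 \;\le\; \frac{m(2^n - m)}{2^n - 1} \;\le\; m,
\]
the final inequality being automatic because $2^n - m \le 2^n - 1$ whenever $m \ge 1$. Taking $H = H_\alpha$ completes the argument.

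I do not expect a genuine obstacle here: the Fourier framework is entirely standard, and the averaging bound is already tight enough for the stated range. The only point worth checking is that discarding the trivial character $\alpha = 0$ (necessary, since $H_0$ is all of $\ff_2^n$, not a hyperplane) still leaves enough slack, and the calculation above confirms this costs nothing. The edge cases $m = 0$ and $m = 2^n$ are trivial. One could equivalently phrase the same argument combinatorially as a second-moment calculation: compute $\sum_H |H \cap S|$ and $\sum_H |H \cap S|^2$ by double counting incidences and pairs, then invoke the pigeonhole principle, but the Fourier version is cleaner.
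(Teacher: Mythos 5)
Your proof is correct. The paper itself does not prove this lemma; it simply cites Charbit et al.~\cite{Charbit}, so there is no in-paper argument to compare against. Your Fourier-analytic derivation is sound: the identity $|H_\alpha \cap S| = \tfrac12\bigl(m + \widehat{f}(\alpha)\bigr)$, Parseval giving $\sum_{\alpha\neq 0}|\widehat{f}(\alpha)|^2 = m(2^n-m)$, and the averaging step $\min_{\alpha\neq 0}|\widehat{f}(\alpha)|^2 \le \frac{m(2^n-m)}{2^n-1}\le m$ (valid for $m\ge 1$, with $m=0$ trivial) together yield exactly the claimed range. As you note, this is equivalent to a second-moment/double-counting argument over the $2^n-1$ hyperplanes, which is in the same spirit as the cited source; the Fourier phrasing just makes the bookkeeping cleaner.
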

	
	In order to apply bounds on the number of certain affine subspaces,  important ingredients will be the Gaussian binomials and their estimates. 
	\begin{definition}
		$ \begin{bmatrix}n\\ k\end{bmatrix}_q$ denotes the number of $k$-dimensional subspaces of an $n$-dimensional  vector space over $\ff_q$. 
	\end{definition}
	
	\begin{cor}\label{numk-flats}
		$q^{n-k} \begin{bmatrix}n\\ k\end{bmatrix}_q$ is the number of $k$-flats in $\AG(n,q).$
	\end{cor}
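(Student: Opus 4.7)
The plan is to use the standard bijection between $k$-flats and cosets of $k$-dimensional linear subspaces. First I would recall that, by definition, a $k$-flat in $\AG(n,q)$ is a set of the form $v+V$, where $v\in \ff_q^n$ and $V\le \ff_q^n$ is a $k$-dimensional linear subspace. The key observation is that the underlying subspace $V$ is uniquely determined by the flat: if $v_1+V_1=v_2+V_2$ for some $v_1,v_2\in \ff_q^n$ and some $k$-dimensional subspaces $V_1,V_2$, then by translating we obtain $V_1 = (v_2-v_1)+V_2$, and since both $V_1$ and $V_2$ contain $0$, we must have $v_2-v_1\in V_2$, whence $V_1 = V_2$.

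Next I would count by partitioning the set of $k$-flats according to their underlying subspace. By the definition of the Gaussian binomial, the number of $k$-dimensional linear subspaces of $\ff_q^n$ is exactly $\begin{bmatrix}n\\ k\end{bmatrix}_q$. For a fixed such subspace $V$, the cosets $v+V$ form a partition of $\ff_q^n$ into classes of equal size $|V|=q^k$, so the number of distinct cosets of $V$ is $q^n/q^k = q^{n-k}$.

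Finally, combining these two counts by multiplication gives the total number of $k$-flats in $\AG(n,q)$ as
\[
q^{n-k}\begin{bmatrix}n\\ k\end{bmatrix}_q,
\]
as claimed. There is essentially no obstacle here: the only thing to be careful about is the uniqueness of the underlying subspace, which ensures that multiplication does not overcount, but this follows directly from the group structure of $\ff_q^n$ as noted above.
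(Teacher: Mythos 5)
Your proof is correct and is the standard argument the paper implicitly relies on (the corollary is stated without proof, immediately after defining the Gaussian binomial): each $k$-flat is a coset of a unique $k$-dimensional subspace, there are $\begin{bmatrix}n\\ k\end{bmatrix}_q$ choices of subspace and $q^{n-k}$ cosets of each, and uniqueness of the underlying subspace guarantees the product does not overcount.
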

	
	We will apply bounds on these Gaussian binomials, i.e. on the number of subspaces. 
	As it was proved in \cite{HegerNagy}, we have
	\begin{lemma}[Héger, Nagy, \cite{HegerNagy}]\label{HN}
		\mbox{$ \begin{bmatrix}n\\ k\end{bmatrix}_2<2^{(n-k)k+1}\cdot e^{2/3}$} and 
		$ \begin{bmatrix}n\\ k\end{bmatrix}_q< q^{(n-k)k}\cdot e^{1/(q-2)}$ for $q>2$  and $k>0$.
	\end{lemma}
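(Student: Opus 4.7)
My plan is to start from the standard factorization of the Gaussian binomial coefficient. Writing
$$\begin{bmatrix}n\\k\end{bmatrix}_q = \prod_{j=1}^{k}\frac{q^{n-k+j}-1}{q^{j}-1} = q^{k(n-k)}\prod_{j=1}^{k}\frac{1-q^{k-n-j}}{1-q^{-j}},$$
and noting that because $k\leq n$ every exponent $k-n-j$ is negative, so each numerator factor $1-q^{k-n-j}$ lies in $(0,1)$, I would simply drop the numerators and extend the finite denominator product to the full Euler-type product
$$P(q) := \prod_{j=1}^{\infty}\frac{1}{1-q^{-j}}.$$
This reduces the lemma to proving the two numeric bounds $P(2)\leq 2e^{2/3}$ and $P(q)\leq e^{1/(q-2)}$ for $q\geq 3$.

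For $q=2$ I would peel off the $j=1$ factor, which contributes $(1-\tfrac12)^{-1}=2$, leaving $P(2)=2\prod_{j\geq 2}(1-2^{-j})^{-1}$. It then suffices to show $\sum_{j\geq 2}-\ln(1-2^{-j})\leq 2/3$. The key estimate is the two-term Taylor majorant $-\ln(1-x)\leq x+x^2$ on $[0,1/2]$, verified by noting that $x+x^2+\ln(1-x)$ vanishes at $0$ and has derivative $x(1-2x)/(1-x)\geq 0$ on the interval. Plugging in $x=2^{-j}$ for $j\geq 2$ and summing the two geometric series gives $\sum_{j\geq 2}(2^{-j}+4^{-j})=\tfrac12+\tfrac{1}{12}=\tfrac{7}{12}<\tfrac{2}{3}$, as desired.

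For $q\geq 3$ I would take logarithms, expand $-\ln(1-q^{-j})$ as a power series, and swap the order of summation:
$$\ln P(q) \;=\; \sum_{j=1}^{\infty}\sum_{m=1}^{\infty}\frac{q^{-jm}}{m} \;=\; \sum_{m=1}^{\infty}\frac{1}{m(q^m-1)}.$$
The elementary bound $q^m-1\geq (q-1)q^{m-1}$ then yields $\ln P(q)\leq \tfrac{q}{q-1}\sum_{m\geq 1}\tfrac{1}{mq^m} = \tfrac{q}{q-1}\ln\tfrac{q}{q-1}$. Applying $\ln(1+1/(q-1))\leq 1/(q-1)$ turns this into $q/(q-1)^2$, and the elementary rearrangement $q(q-2)\leq (q-1)^2$ finally gives $q/(q-1)^2\leq 1/(q-2)$, completing the bound.

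The main obstacle is purely one of calibration: the Taylor-type majorants must be sharp enough to hit the target constants $2e^{2/3}$ and $e^{1/(q-2)}$. Looser estimates such as $-\ln(1-x)\leq x/(1-x)$ are too lossy, but the two-term truncation $x+x^2$ on $[0,1/2]$ and the sharpening $1/(q^m-1)\leq 1/((q-1)q^{m-1})$ on the log-side of the $q\geq 3$ case are each \emph{just} tight enough to make the stated constants come out.
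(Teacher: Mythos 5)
Your argument is correct and complete. A small caveat: the paper quotes this lemma from Héger--Nagy \cite{HegerNagy} without reproducing the proof, so there is no in-paper argument to compare against word for word. That said, your route is the standard one (and almost certainly the one used there): factor out $q^{k(n-k)}$ from $\prod_{j=1}^k \frac{q^{n-k+j}-1}{q^j-1}$, drop the numerator factors $1-q^{k-n-j}\in(0,1)$, extend the denominator product to the full Euler product $P(q)=\prod_{j\ge 1}(1-q^{-j})^{-1}$, and then calibrate $P(2)<2e^{2/3}$ and $P(q)\le e^{1/(q-2)}$ numerically. All the individual estimates check out: the derivative computation for $x+x^2+\ln(1-x)$, the geometric sums $\tfrac12+\tfrac{1}{12}=\tfrac{7}{12}<\tfrac23$, the identity $\ln P(q)=\sum_{m\ge1}\tfrac{1}{m(q^m-1)}$ (justified by non-negativity for the interchange), the bound $q^m-1\ge(q-1)q^{m-1}$, and the final chain $\frac{q}{q-1}\ln\frac{q}{q-1}\le\frac{q}{(q-1)^2}\le\frac{1}{q-2}$. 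The only inessential inaccuracy is your motivating remark that $-\ln(1-x)\le x/(1-x)$ would be ``too lossy'': plugging $x=2^{-j}$ gives $\sum_{j\ge 2}\frac{1}{2^j-1}\approx 0.607<\tfrac23$, so that bound actually also suffices (it just doesn't telescope to a clean geometric closed form). This is a side comment and does not affect the validity of the proof.
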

	
	We will also use the so-called Euler function $\phi(q)=\prod_{k=1}^{\infty} (1-q^k)$ where $q\in \mathbb{C}$, $|q|<1$.
	
	For a graph $G=(V,E)$ and any set $A\subseteq V$, $e(A)$ denotes the number of edges spanned by $A$, and if $A$ and $B$ are disjoint subsets of $V$ then $e(A,B)$ denotes the number of edges between $A$ and $B$.
	
	\subsection{Small cases}
	
	We will first investigate $Sp(n;k,t)$ in cases where $k$ is small. Firstly, we can see that determining the spectrum $Sp(n;k,t)$, or its density, is essentially the same problem as determining $Sp(n;k,2^k-t)$, or its density:
	
	\begin{lemma}\label{complementing}
		If $n,k\ge 1$ are integers and $0\le t\le 2^k$, then $Sp(n;k,t)=2^n-Sp(n;k,2^k-t)$.
	\end{lemma}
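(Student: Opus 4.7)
The plan is to exploit the set-theoretic complementation map $S \mapsto \overline{S}$ on $\ff_2^n$, which is a size-reversing bijection between $m$-subsets and $(2^n-m)$-subsets of the affine space. The key observation is that for any $k$-flat $H$, the identity $|H| = 2^k$ yields
\[
|H \cap \overline{S}| \;=\; |H| - |H \cap S| \;=\; 2^k - |H \cap S|.
\]
Consequently, a $k$-flat $H$ is a $[k,t]$-flat induced by $S$ if and only if it is a $[k, 2^k - t]$-flat induced by $\overline{S}$.

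From this equivalence, I would argue as follows. If $m \in Sp(n;k,t)$, then every $m$-element set $S \subseteq \ff_2^n$ contains a $[k,t]$-flat. Given any $(2^n - m)$-element set $T$, its complement $\overline{T}$ has size $m$, hence contains a $[k,t]$-flat $H$; by the displayed identity, $H$ is a $[k, 2^k - t]$-flat induced by $T$. Therefore $2^n - m \in Sp(n;k, 2^k - t)$. The converse direction is symmetric since $\overline{\overline{S}} = S$, giving the set equality $Sp(n;k,t) = 2^n - Sp(n;k, 2^k - t)$ (where $2^n - A := \{2^n - a : a \in A\}$).

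There is no real obstacle here — the argument is essentially a one-line observation about complementation, and the only thing to be careful about is the notational convention that $2^n - Sp(n;k, 2^k-t)$ denotes the image of the spectrum under the affine map $x \mapsto 2^n - x$, rather than a set difference. I would state this convention explicitly at the start of the proof to avoid confusion with the earlier use of $\overline{S}$ for set complement.
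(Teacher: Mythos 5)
Your proof is correct and uses exactly the same approach as the paper: complementation of the point set together with the observation that $|H\cap \overline{S}| = 2^k - |H\cap S|$ for any $k$-flat $H$, and symmetry for the converse inclusion. The explicit remark about reading $2^n - Sp(n;k,2^k-t)$ as the image under $x\mapsto 2^n - x$ is a reasonable clarification, though the paper leaves it implicit.
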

	
	\begin{proof}
		Suppose $m\in Sp(n;k,t)$. Now take any set $T\subseteq \ff_2^n$ with $|T|=2^n-m$.  Then $\overline{T}$ is a set of size $m$, hence there exists a $k$-flat $F_k$ such that $|\overline{T}\cap F_k|=t$. Then $|T\cap F_k|=|F_k|-t=2^k-t$, so $T$ induces a $[k,2^k-t]$-flat. So $2^n-m\in Sp(n;k,2^k-t)$, meaning that $m\in 2^n-Sp(n;k,2^k-t)$. The other containment can be seen by symmetry.
	\end{proof}
	
	\begin{proposition} \ \vspace{-1.4mm}
		\begin{enumerate}[label=(\roman*)] 
			\item $Sp(n;1,0)=[0,2^n-2]$,
			\item $Sp(n;1,1)=[1,2^n-1]$,	
			\item $Sp(n;1,2)=[2,2^n]$,
			\item $Sp(n;2,1)=[0, 2^n]\setminus (\{2^n\} \cup \{2^n-2^d: d\in [0,n]\})$,
			\item $Sp(n;2,2)=[2,2^n-2]$,
			\item $Sp(n;2,3)=[0, 2^n]\setminus (\{0\} \cup \{2^d: d\in [0,n]\})$.  
		\end{enumerate}
	\end{proposition}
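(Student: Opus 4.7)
The plan is to handle the six parts using distinctive features of $\AG(n,2)$: every two distinct points form a 1-flat, every 2-flat is a 4-element set, and any three distinct points $p_1,p_2,p_3$ extend to a unique 2-flat with fourth point $p_1+p_2+p_3$. Lemma \ref{complementing} reduces (iii) to (i) and (vi) to (iv), so it suffices to address (i), (ii), (iv), and (v).

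For (i) and (ii), a $[1,t]$-flat is simply a pair of points having $t$ elements in $S$. A $[1,0]$-flat exists iff $\overline{S}$ has at least two elements, giving $Sp(n;1,0) = [0, 2^n-2]$. A $[1,1]$-flat fails to exist only when every pair is either entirely in or disjoint from $S$, i.e., $S \in \{\emptyset, \ff_2^n\}$, giving $Sp(n;1,1) = [1, 2^n-1]$.

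For (v), the endpoints $m \in \{0,1,2^n-1,2^n\}$ are excluded since $|S|$ or $|\overline{S}|$ is too small for any 2-flat to meet $S$ in exactly 2 points. For $2 \le m \le 2^n-2$, suppose $S$ contains no $[2,2]$-flat and pick $a,b \in S$. The pairs $\{c, a+b+c\}$ with $c \notin \{a,b\}$ partition $\ff_2^n \setminus \{a,b\}$ into $2^{n-1}-1$ pairs, and for each of them the 2-flat $\{a,b,c,a+b+c\}$ must contain at least three $S$-points, so $c$ or $a+b+c$ lies in $S$. This forces $|S| \ge 2^{n-1}+1$. Since every 2-flat $F$ has $|F|=4$, the condition $|F\cap S|=2$ is equivalent to $|F\cap\overline{S}|=2$, so the ``no $[2,2]$-flat'' property is self-complementary; applying the same reasoning to $\overline{S}$ yields $|\overline{S}| \ge 2^{n-1}+1$, a contradiction.

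For (iv), set $T = \overline{S}$. A 2-flat through a fixed $a \in S$ consists of $a$ together with three points $\{x,y,x+y+a\}$ with $x,y$ distinct and distinct from $a$. The no-$[2,1]$-flat condition at $a$ says no such triple lies entirely in $T$; ranging over $a \in S$ this translates into the closure condition
\[
u_1,u_2,u_3 \in T \text{ distinct} \implies u_1+u_2+u_3 \in T.
\]
The cases $|T|\le 2$ hold vacuously, $|T|=3$ is impossible (the closure forces two of the three elements to coincide), and for $|T|\ge 4$ translating by any $a_0 \in T$ produces a set $T'$ with $0 \in T'$ which is closed under $v_1+v_2$ for distinct nonzero $v_1,v_2$ (apply the closure to $v_1+a_0,v_2+a_0,a_0$); together with $v+v=0$ and $v+0=v$ this upgrades to full additive closure, making $T'$ a linear subspace. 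Conversely, any affine subspace $T$ meets every 2-flat in $0,1,2,$ or $4$ points (intersection of two affine subspaces), so $\overline{T}$ avoids $[2,1]$-flats. Hence $m \notin Sp(n;2,1)$ iff $2^n-m \in \{0\} \cup \{2^d : 0 \le d \le n\}$, proving (iv); (vi) is then immediate from Lemma \ref{complementing}. The main obstacle is the affine-subspace characterization in (iv): one must carefully upgrade a closure property stated only for \emph{distinct} triples to full subgroup structure after translation to the origin, taking care of the degenerate small-size cases.
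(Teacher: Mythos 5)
Your proof is correct. Parts (i)--(iii) match the paper's argument. For (iv) and (vi) the paper works directly with (vi), noting that a nonempty $S$ induces a $[2,3]$-flat iff $S$ is not an affine subspace, and then applies Lemma \ref{complementing}; you instead work directly with (iv) via the complement $T=\overline{S}$, and you explicitly carry out the step the paper leaves implicit, namely upgrading the closure property ``distinct $u_1,u_2,u_3\in T\Rightarrow u_1+u_2+u_3\in T$'' to full affine-subspace structure (translating by $a_0\in T$ and handling the degenerate diagonal cases). That is a genuine clarification of the compressed argument in the paper. For (v) your route is genuinely different: the paper picks a mixed pair $s\in S$, $s'\notin S$, partitions $\ff_2^n$ into $\langle s'-s\rangle$-cosets, and runs a case analysis on the types ($[1,0]$, $[1,1]$, $[1,2]$) of the other cosets; you instead pick $a,b\in S$, pair up $\ff_2^n\setminus\{a,b\}$ as $\{c,a+b+c\}$, deduce $|S|\ge 2^{n-1}+1$ from the absence of $[2,2]$-flats, and then exploit the self-complementarity of the ``no $[2,2]$-flat'' condition to get $|\overline{S}|\ge 2^{n-1}+1$ as well, a contradiction. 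Your symmetry argument is a bit slicker and avoids the paper's three-way case split, while the paper's version is perhaps more elementary since it only counts coset types. Both are valid.
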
 
	
	\begin{proof} Consider part (i) first and observe that a $[1,0]$-flat consists of any two points outside $S$, which in turn implies the claim. Similarly in part (iii), any two points in $S$ make a $[1,2]$-flat.
		For part (ii), a $[1,1]$-flat consists of any two points such that one of them is in $S$ and the other is not, so a set $S\subseteq \ff_2^n$ induces a $[1,1]$-flat if and only if $1\le |S|\le 2^n-1$.\\
		For parts (iv) and (vi), a nonempty set $S$ induces a $[2,3]$-flat if and only if there exists $x\in \ff_2^n$ and nonzero $a,b\in \ff_2^n$ such that $x, x+a, x+b\in S$ but $x+a+b\not\in S$. This is exactly the condition such that $S$ is not an affine subspace of $\ff_2^n$, hence the sizes forcing a $[2,3]$-flat are exactly those which are not zero and not powers of 2. By applying Lemma \ref{complementing}, we also obtain the sizes forcing a $[2,1]$-flat.
		
		For part (v), we can see that a $[2,2]$-flat needs at least two points in $S$ and at least two not in $S$, so $Sp(n;2,2)\subseteq [2, 2^n-2]$. On the other hand, if $2\le |S|\le 2^n-2$, then take a pair of points $(s,s')$ with $s\in S$ and $s'\not\in S$. Letting $v=s'-s$, subdivide $\ff_2^n$ into the pairs determined by the $\langle v\rangle$-cosets. Note that any two of the pairs form a $2$-flat. Knowing that the pair $\{s,s'\}$ contains exactly one element of $S$ (i.e. is a $[1,1]$-flat), if we found another pair with exactly one element of $S$ then they together would form a $[2,2]$-flat. Assuming there is no other such pair, all other pairs must either be $[1,2]$- or $[1,0]$-flats. But a $[1,2]$-flat and a $[1,0]$-flat also form a $[2,2]$-flat together, so assuming this does not happen, either all other pairs are $[1,2]$-flats (in which case $|S|=2^n-1$), or all other flats are $[1,0]$-flats (in which case $|S|=1$), giving a contradiction in both cases.
	\end{proof}
	
	The determination of the spectrum $Sp(n;2,4)$ (and its complement $Sp(n;2,0)$) is still a challenging problem. This problem has been studied under the name of Sidon sets in binary vector spaces. A subset $S$ of an abelian group $A$ is a \textit{Sidon set}  if the only solutions to the equation $ a + b = c + d$
	with $a, b, c, d \in  A$ are the trivial solutions when $(a, b)$ is a permutation of $(c, d)$. Observe that for $A=\ff_2^n$, $S$ 
	contains a $[2,4]$-flat if and only if it is not a Sidon set. There are known results on Sidon sets in this setting due to Bose, Ray-Chaudhuri, by Tait, Won and by Lindström, which imply the following.
	\begin{proposition}\label{sidon_prop}
		\begin{enumerate}[label=(\roman*)]
			\item (\cite{BR60}, also see \cite{Lin69, Tait}) There exists a constant $C>0$ such that $[n, m]\to [2,4]$ for every $m\ge C\cdot 2^{n/2}$.
			
			\item (\cite{BR60}) For even $n$, the explicit construction $\{(x, x^3): x\in \ff_{2^{n/2}}\}$ shows that $[n, 2^{n/2}]\not\to [2,4]$.
		\end{enumerate}
	\end{proposition}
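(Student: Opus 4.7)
The paper has already noted the reformulation: $S\subseteq \ff_2^n$ contains a $[2,4]$-flat if and only if $S$ is not a Sidon set (the four points of a $2$-flat are precisely the solutions of $a+b+c+d=0$ with $a,b,c,d$ pairwise distinct). Both parts therefore reduce to the classical bounds on the maximum size of a Sidon set in $\ff_2^n$, and the plan is to establish these directly.

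For part (i) I would use a one-line pigeonhole on pair sums. If $S$ is Sidon, then the map $\{a,b\}\mapsto a+b$ from unordered pairs of distinct elements of $S$ into $\ff_2^n\setminus\{0\}$ is injective, whence $\binom{|S|}{2}\le 2^n-1$ and $|S|\le \sqrt{2}\cdot 2^{n/2}+1$. Contrapositively, any $S$ of size $m\ge C\cdot 2^{n/2}$ with $C$ slightly above $\sqrt{2}$ must fail to be Sidon and hence contains a $[2,4]$-flat.

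For part (ii) I would verify directly that $S=\{(x,x^3):x\in \ff_{2^{n/2}}\}\subseteq \ff_{2^{n/2}}\times \ff_{2^{n/2}}\cong \ff_2^n$ is Sidon. Suppose four distinct $x_1,\dots,x_4\in \ff_{2^{n/2}}$ satisfy $\sum_i x_i=0$ and $\sum_i x_i^3=0$, and set $s=x_1+x_2=x_3+x_4$, $p=x_1x_2$, $p'=x_3x_4$. The characteristic-$2$ factorization
\[ x_1^3+x_2^3 = (x_1+x_2)\bigl((x_1+x_2)^2+x_1x_2\bigr) = s^3+sp \]
reduces the cubic equation to $sp=sp'$. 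Distinctness of the $x_i$ forces $s\ne 0$ (otherwise $x_1=x_2$), whence $p=p'$; then $\{x_1,x_2\}$ and $\{x_3,x_4\}$ are both the root set of $y^2+sy+p$ over $\ff_{2^{n/2}}$, contradicting distinctness.

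I expect the only subtle point to be the choice of the curve $y=x^3$ in (ii): it works precisely because in characteristic $2$ the sum of two cubes factors so that, once the sum $s=x_1+x_2$ is fixed, it depends on $x_1,x_2$ only through the product $p=x_1x_2$. Once this factorization is identified, both parts are short, and the matching upper and lower bounds $\Theta(2^{n/2})$ fall out.
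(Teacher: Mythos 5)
Your proof is correct. The paper does not actually supply a proof of this proposition --- it is stated as a citation of classical results (Bose--Ray-Chaudhuri, Lindström, Tait--Won), relying on the observation made just above it that $S$ contains a $[2,4]$-flat if and only if $S$ is not a Sidon set. Your write-up simply supplies the standard proofs of those cited facts: the pigeonhole count $\binom{|S|}{2}\le 2^n-1$ for the upper bound (note the resulting constant is $C=\sqrt{2}$ up to lower-order terms, matching Lindström's bound $|S|\le \sqrt{2\cdot 2^n}+1$), and the Bose--Ray-Chaudhuri BCH-type construction $\{(x,x^3)\}$ with the characteristic-$2$ factorization $x_1^3+x_2^3=s^3+sp$ for the lower bound. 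One small point worth making explicit in part (ii): after deducing $p=p'$, the conclusion $\{x_1,x_2\}=\{x_3,x_4\}$ uses that a monic quadratic over a field has at most two roots, so the root multiset of $y^2+sy+p$ determines the unordered pair --- this is implicit but should be stated if you intend the argument to stand alone. Otherwise the argument is complete and is exactly the intended one behind the citations.
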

	
	The complete characterization of the spectrum for the case $k=3$, $t=4$ requires a combination of various tools, including probabilistic methods.
	
	\begin{proposition}
		$Sp(n;3,4)=[4,2^n-4]$.
	\end{proposition}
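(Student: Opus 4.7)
The plan is to prove the two inclusions $Sp(n;3,4)=[4, 2^n-4]$ separately. The inclusion $\subseteq$ is immediate: if $m\le 3$ then $|S|<4$, so no 3-flat can meet $S$ in 4 points; if $m\ge 2^n-3$ then $|S^c|\le 3$ forces every 3-flat to meet $S$ in at least $5$ points. For the reverse containment, by Lemma \ref{complementing} it suffices to treat $m\in [4, 2^{n-1}]$.

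The core combinatorial argument is a dichotomy. If $S$ is a Sidon set, I would pick any four distinct $a,b,c,d\in S$; the Sidon property gives $a+b+c+d\ne 0$, so these four points affinely span a 3-flat $F$ whose remaining four points are the triple sums $a+b+c,\ a+b+d,\ a+c+d,\ b+c+d$. If any one of these lay in $S$ it would force a nontrivial equation such as $a+b=c+(a+b+c)$, contradicting the Sidon condition; hence $|F\cap S|=4$ and $F$ is a $[3,4]$-flat. If $S$ is not Sidon, it contains a 2-flat $P\subseteq S$. The $2^{n-2}-1$ three-flats through $P$ are in bijection with the $2^{n-2}-1$ affine 2-flats parallel to $P$ and different from it, and these partition $\F_2^n\setminus P$ into classes of size $4$. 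If any such class avoids $S$, the corresponding 3-flat meets $S$ in exactly $P$ and is a $[3,4]$-flat; otherwise $|S\setminus P|\ge 2^{n-2}-1$, i.e.\ $m\ge 2^{n-2}+3$. Combining both branches yields $[4, 2^{n-2}+2]\subseteq Sp(n;3,4)$, and then Lemma \ref{complementing} also gives $[3\cdot 2^{n-2}-2, 2^n-4]\subseteq Sp(n;3,4)$.

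It remains to cover the middle interval $[2^{n-2}+3,\, 3\cdot 2^{n-2}-3]$. Here I would invoke Theorem \ref{halfflat} with $k=3$, which supplies an absolute constant $C>0$ such that $[n,m]\to [3,4]$ for every $m\in [C\cdot 2^{n/2},\, 2^n-C\cdot 2^{n/2}]$; this is the probabilistic ingredient alluded to in the remark preceding the statement. Since $2^{n-2}$ dominates $C\cdot 2^{n/2}$ once $n$ exceeds an absolute threshold, the two regions together cover the whole of $[4, 2^n-4]$ for all sufficiently large $n$. The main obstacle will be the finitely many small values of $n$ for which the constant in Theorem \ref{halfflat} still leaves a gap in the middle of the spectrum; for those I would close the gap by hand via double counting of hyperplane--$S$ incidences, using the standard identities for $\sum h_i$, $\sum i\, h_i$ and $\sum i(i-1)\, h_i$, and showing that the hypothesis $h_4=0$ forces an infeasible linear Diophantine equation on the parallel-class profile of $S$.
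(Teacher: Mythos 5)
Your Sidon/non-Sidon dichotomy for the range $4 \le m \le 2^{n-2}+2$ is correct and is a genuinely cleaner route than the paper's. The paper instead runs a more intricate case analysis on a single parallel class of $2$-flats (ruling out $[2,4]$-, $[2,1]$-flats, two $[2,2]$-flats, and two $[2,3]$-flats in the same class one by one) to conclude $|S|\le 5$ when $|S|<2^{n-2}$, then handles $|S|=4,5$ separately; your approach gets the same conclusion, in fact for the slightly wider range $m\le 2^{n-2}+2$, in a few lines, and Lemma \ref{complementing} mirrors it onto the top end. That part is a real improvement in readability.

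The gap is in the middle interval $[2^{n-2}+3,\ 3\cdot 2^{n-2}-3]$. Invoking Theorem \ref{halfflat} with $k=3$ only proves $[n,m]\to[3,4]$ for $m\in[C\cdot 2^{n/2},\ 2^n - C\cdot 2^{n/2}]$, and the constant $C$ there is not made explicit (indeed its proof begins by assuming $n$ is large), so this covers the middle only once $n$ exceeds an unspecified threshold; there is also an awkward forward reference since that theorem is established later in the paper. Your proposed fix for the remaining small $n$ --- a double count of $2$-flat incidences and a claimed infeasibility of the resulting linear Diophantine system under $h_4=0$ --- is not going to close the gap as stated: the profile constraints $\sum h_i$, $\sum i h_i$, $\sum\binom{i}{2}h_i$ are far from determining the profile, and the system with $h_4=0$ has many non-negative integer solutions, so no contradiction falls out of linear algebra alone. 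The paper instead uses induction on $n$ via Lemma \ref{char} (Charbit et al.): that lemma produces a hyperplane $A$ with $|S\cap A|$ close to $|S|/2$, and for $2^{n-2}\le |S|\le 2^{n-1}$ and $n\ge 5$ this lands in $[4,\ 2^{n-1}-4]$, so the induction hypothesis in $A$ finishes it, with only $n=3$ (trivial) and $n=4$ (checked by exhaustive search) as base cases. To make your write-up complete and uniform in $n$, replace the appeal to Theorem \ref{halfflat} and the Diophantine sketch with this Charbit-based induction.
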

	\begin{proof}
		It is clear that a $[3,4]$-flat requires at least 4 points in $S$ and at least 4 not in $S$, so $Sp(n;3,4)\subseteq [4,2^n-4]$.
		
		Firstly we show that in the case $4\le |S|\le 2^{n-2}-1$, $S$ induces a $[3,4]$-flat. Assume on the contrary that it does not. Consider the parallel classes of $2$-flats. In any given parallel class, there must be at least one $[2,0]$-flat (otherwise $S$ would have at least $2^{n-2}$ points). If the same class also contains a $[2,4]$-flat, then together with the $[2,0]$-flat they would form a $[3,4]$-flat, which is a contradiction. So $S$ cannot induce any $[2,4]$-flats at all. Take a line $\ell=\{a,b\}$ joining any two points of $S$. The $2$-flats through $\ell$ cover each point outside $\ell$ exactly once, and there cannot be a $[2,4]$-flat among them, so there must be at least one $[2,3]$-flat (since $|S|\ge 3$). Take any $[2,3]$-flat $H$, and consider the parallel class of $H$. If it contains any $[2,1]$-flat, then together with $H$ it would form a $[3,4]$-flat, which is a contradiction. So the class can only contain flats of type $[2,0]$, $[2,2]$ and $[2,3]$. There can be at most one $[2,2]$-flat, as two $[2,2]$-flats again form a $[3,4]$-flat. Now suppose $H'$ is another $[2,3]$-flat in this class. Let $H=\{x, ~x+a, ~x+b, ~x+a+b\}$ with $x,~x+a,~x+b\in S$ and $x+a+b\not\in S$. Then $H'$ can be written as $\{y,~y+a,~y+b,~y+a+b\}$. If $y, ~y+a\in S$ then $\{x,~x+a,~y,~y+a\}$ is a $[2,4]$-flat, which is a contradiction. Similarly if $y, ~y+b\in S$ we get a contradiction, and also if $y+a, ~y+b\in S$ then we would have the $[2,4]$-flat $\{x+a, ~x+b, ~y+a, ~y+b\}$. So all in all, out of the points $y, ~y+a, ~y+b$ at most one can be in $S$, so $H'$ contains at most 2 points of $S$ and not $3$, which is a contradiction. So the class cannot contain a $[2,3]$-flat different than $H$. Summing up all points in the parallel class, we get that $|S|\le 5$.
		
		Any set of $4$ points is contained in a $3$-dimensional affine subspace, resolving the case $|S|=4$. Finally if $|S|=5$, then in the parallel class of $H$, there must be a $[2,2]$-flat $H'$. Letting $H=\{x,~x+a,~x+b,~x+a+b\}$ with $x,~x+a,~x+b\in S$ and $x+a+b\not\in S$, and $H'=\{y,~y+a,~y+b,~y+a+b\}$, one can easily see that no matter which two points of $H'$ are contained in $S$, $H\cup H'$ will always contain a $[2,4]$-flat induced by $S$, which is a contradiction.
		
		This concludes the proof that any $S$ with $4\le |S|\le 2^{n-2}-1$ induces a $[3,4]$-flat.
		
		Now suppose that $4\le |S|\le 2^n-4$, and let us show that $S$ induces a $[3,4]$-flat. We use induction on $n$. For $n=3$, $|S|=4$ so the whole space is a $[3,4]$-flat, while we have verified the case $n=4$ via exhaustive search. From now on we assume that $n\ge 5$. The case $4\le |S|\le 2^{n-2}-1$ is already covered, and by Lemma \ref{complementing}, we are also done in the case $3\cdot 2^{n-2}+1\le |S|\le 2^n-4$.  Thus the remaining case is when $2^{n-2}\le |S|\le 3\cdot 2^{n-2}$ holds. By complementing, we can actually assume that $2^{n-2}\le |S|\le 2^{n-1}$. We apply Lemma \ref{char} of Charbit et al. \cite{Charbit} on $S$, where either $H$ or $\overline{H}$ will be an $n-1$-flat $A$ with $|S\cap A|\in \left[\frac12m, \frac12m+\frac12\sqrt{m}\right]$. Since $m\in [2^{n-2},2^{n-1}]$, this means that $|S\cap A|\in \left[2^{n-3}, 2^{n-2}+2^{\frac12n-\frac32}\right]$. Here $n\ge 3$ implies that $\frac12n-\frac32\le n-3$, so $|S\cap A|\in [2^{n-3}, 3\cdot 2^{n-3}]$. Since $n\ge 5$, we have $2^{n-3}\ge 4$ and $3\cdot 2^{n-3}\le 2^{n-1}-4$, so $|S\cap A|\in [4, 2^{n-1}-4]$. By the induction hypothesis, the statement is true in the $n-1$-dimensional flat $A$, so $S$ induces a $[3,4]$-flat in $A$.
	\end{proof}
	
	\begin{remark}
		We have seen that $Sp(n;k,2^{k-1})=[2^{k-1}, 2^n-2^{k-1}]$ holds for $k=1,2,3$. However this does not hold for larger $k$. In fact, for $k\ge 4$ there does not exist a constant $C$ independent of $n$ such that $[n,C]\to [k,2^{k-1}]$ holds for all $n\ge k$, since for $n\ge \max(C-1,k+1)$ we can pick a subset $S$ of size $C$ in general position, and then $k$-flats only contain at most $k+1$ points of $S$ while $k+1<2^{k-1}$ for $k\ge 4$. As we will point out in Theorem \ref{ujgur} in the next section, even the  $(k,k+1)$-evasive sets can be exponentially large in $n$.
	\end{remark}
	
	\section{Avoidable sizes with $k$-dimensional subspaces}\label{evasive_lexico_section}
	
	In this section, we show two types of constructions which have (several) missing values in their $k$-dimensional profile. These in turn provide a relatively large set of missing values from the $(n;k,t)$-spectrum. Moreover we point out that these constructions can be combined via disjoint union and difference, leading to even more missing values.
	
	The first type corresponds to evasive sets, which were mentioned in the introduction. Here we refine the probabilistic bounds of Guruswami. Then we continue by constructing $m$-sets which have a profile of relatively small size  provided that $m$ has small support in its binary or in its canonical signed-digit binary representation.
	
	\subsection{Evasive sets}
	
	As we mentioned in the introduction, the existence of $(k,c)$-evasive sets in turn provides sets which avoid every intersection size greater than $c$. The random construction of Guruswami showed that we can obtain a $(k,c)$-evasive set on at least $2^{n(1-\frac{2k}{c})}/2^{k+1}$ points. Below we refine his argument using alteration.
	
	\begin{theorem}\label{ujgur}
		Let $k,c\in \zz^{+}$ with $c\ge k+1$. Then for  $n>k$, there exists a $(k,c)$-evasive set in $\ff_2^n$ of size at least $$\lfloor K\cdot 2^{n\left(1-\frac{k}{c}\right)}\rfloor-1$$ where $K=K(k,c):=\frac{c}{c+1}\cdot 2^{k(k+1)/c}\cdot \left(2e^{2/3}(c+1)\binom{2^k}{c+1}\right)^{-\frac{1}{c}}$.
	\end{theorem}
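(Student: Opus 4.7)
The plan is a random-plus-alteration argument, with the inclusion probability $p$ optimized by calculus rather than an ad-hoc choice as in Guruswami's original proof. First, form $S \subseteq \ff_2^n$ by including each point independently with probability $p \in (0,1)$, and let $Y$ denote the number of $(c{+}1)$-element subsets of $S$ that lie in a common $k$-flat. For each such subset, delete one of its points; the resulting set $S'$ is $(k,c)$-evasive by construction and satisfies $|S'| \ge |S| - Y$, so $\mathbb{E}[|S'|] \ge p \cdot 2^n - \mathbb{E}[Y]$.

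To bound $\mathbb{E}[Y]$ from above, I would count ordered pairs $(F,T)$ with $F$ a $k$-flat of $\ff_2^n$ and $T \subseteq F \cap S$ of size $c{+}1$; this is indeed an upper bound on $Y$, since every admissible $T$ is counted at least once (and possibly more, which only weakens the bound we are using). By Corollary~\ref{numk-flats} and Lemma~\ref{HN}, the number of $k$-flats is at most $2e^{2/3}\cdot 2^{(n-k)(k+1)}$, each contains $\binom{2^k}{c+1}$ such subsets, and each is in $S$ with probability $p^{c+1}$, yielding
\[
\mathbb{E}[|S'|] \;\ge\; p\cdot 2^n-2e^{2/3}\binom{2^k}{c+1}\,2^{(n-k)(k+1)}\,p^{c+1}.
\]

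I would then optimize this expression in $p$. Differentiating gives the critical point $p^c = 2^n \bigl/ \bigl((c+1)\cdot 2e^{2/3}\binom{2^k}{c+1}\cdot 2^{(n-k)(k+1)}\bigr)$, at which the bound simplifies to $\tfrac{c}{c+1}p\cdot 2^n$. Using the identity $n-(n-k)(k+1)=-k(n-k-1)$ to collect exponents of $2$, this equals $K \cdot 2^{n(1-k/c)}$ with $K$ exactly as in the statement. Since $|S'|$ is integer-valued, some realization of the random process satisfies $|S'| \ge \lceil K \cdot 2^{n(1-k/c)} \rceil$, which certainly exceeds $\lfloor K \cdot 2^{n(1-k/c)} \rfloor - 1$.

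The main obstacle, while mild, is the bookkeeping: one must verify that the optimizer $p$ lies in $(0,1)$ for all $n>k$ (which follows because the exponent $-k(n-k-1)$ is non-positive precisely when $n\ge k+1$ and the remaining multiplicative factor is at most $1$), and then carefully gather the constants into the single expression $K$. The genuine improvement over Guruswami's $2^{n(1-2k/c)}$ bound comes from running the alteration on individual $(c{+}1)$-subsets rather than union-bounding over whole $k$-flats, combined with the calculus-optimal choice of $p$ that balances the two terms.
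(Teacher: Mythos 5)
Your proof is correct and follows essentially the same random-plus-alteration and parameter-optimization strategy as the paper, arriving at the identical constant $K$. The only (cosmetic) difference is that you sample points independently with probability $p$ and optimize over the real parameter $p$, whereas the paper draws a uniformly random set of exact size $m$, bounds the hypergeometric probability $\binom{2^n-c-1}{m-c-1}/\binom{2^n}{m}$ by $(m/2^n)^{c+1}$, and optimizes over the integer $m$ — which is why the paper carries a floor-and-minus-one correction that your binomial model sidesteps.
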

	
	\begin{proof}
		To construct a $(k,c)$-evasive set, we apply the following procedure. First we suitably choose a value $m\in [c, 2^n]$. Then we take a random point set of size $ m$ in $\ff_2^n$, and if we find a set of $c+1$ points within a $k$-flat, we erase one of those points.\\
		Then the expected value of the number of erased points can be bounded from above by \begin{equation*}
			2^{n-k} \begin{bmatrix}n\\ k\end{bmatrix}_2
			\binom{2^k}{c+1}\frac{\binom{{2^n}-c-1}{m-c-1}}{\binom{2^n}{m}}<2^{(n-k)(k+1)+1}e^{2/3}\binom{2^k}{c+1}\left(\frac{m}{2^n}\right)^{c+1}.
		\end{equation*}
		Indeed, the probability that a given $k$-flat contains at least $c+1$ points of an $m$-element random point set in $\ff_2^n$ is less than   $\binom{2^k}{c+1}\frac{\binom{{2^n}-c-1}{m-c-1}}{\binom{2^n}{m}}$, and we can apply the upper bound given in Lemma \ref{HN} on the  number of $k$-flats, which  is $2^{n-k} \begin{bmatrix}n\\ k\end{bmatrix}_2$, see Corollary \ref{numk-flats}.
		
		The remaining points form a $(k,c)$-evasive set of size at least 
		$$m-2^{(n-k)(k+1)+1}e^{2/3}\binom{2^k}{c+1}\left(\frac{m}{2^n}\right)^{c+1}$$ which we want to maximize in $m$. Note that this is a concave function of $m$, maximized when its derivative is $0$. This happens when $C(n,k,c)(c+1)m^{c}=1$, where
		$$C(n,k,c):=2^{(n-k)(k+1)+1}e^{2/3}\binom{2^k}{c+1}\left(\frac{1}{2^n}\right)^{c+1}.$$
		
		Hence the maximum of this function over $\mathbb{R}$ is attained at $$m= 2^{n\left(1-\frac{k}{c}\right)}\cdot 2^{k(k+1)/c}\cdot \left(2e^{2/3}(c+1)\binom{2^k}{c+1}\right)^{-\frac{1}{c}},$$
		
		while the maximum itself is $\frac{c}{c+1}m$. As $m$ must be an integer, we get that the expected value is at least $\frac{c}{c+1}m-1$ for $$m= \left\lfloor 2^{n\left(1-\frac{k}{c}\right)}\cdot 2^{k(k+1)/c}\cdot \left(2e^{2/3}(c+1)\binom{2^k}{c+1}\right)^{-\frac{1}{c}}
		\right\rfloor. $$ Note that the main term is $2^{n\left(1-\frac{k}{c}\right)}$ while its multiplicative factor is independent of $n$, moreover using Stirling's approximation on the constant, we can obtain that the number of remaining points is at least $$2^{(n-k)(1-\frac{k}{c}) }\cdot\frac{c^2}{2e(c+1)}-1$$ for $c\geq k+1\geq 2.$ This result refines the Theorem \ref{Guru}  of Guruswami.
	\end{proof}
	
	\begin{remark}
		Note that in the case $c\le k$, any set of $c+1$ points is contained in a $c$-flat thus a $(k,c)$-evasive set can not contain more than $c$ points. However, for $c>k$ we already have an exponential lower bound on the size of $(k,c)$-evasive sets in $\ff_q^n$. 
	\end{remark}
	
	If $q>2$, the same proof confirms the following generalization. The only difference is that we have a different constant for the bound on $\begin{bmatrix}n\\ k\end{bmatrix}_q$ compared to $\begin{bmatrix}n\\ k\end{bmatrix}_2$ in Lemma \ref{HN}.
	
	\begin{theorem}[General improved lower bound on evasive sets]\label{ujgur2}
		Let $k,c\in \zz^{+}$ with $c\ge k+1$ and $q>2$. Then for  $n>k$, there exists a $(k,c)$-evasive set in $\ff_q^n$ of size at least $$\lfloor K\cdot q^{n\left(1-\frac{k}{c}\right)}\rfloor-1,$$ where $K=K(k,c):=\frac{c}{c+1}\cdot 2^{k(k+1)/c}\cdot \left(2 e^{1/(q-2)}(c+1)\binom{q^k}{c+1}\right)^{-\frac{1}{c}}$.
	\end{theorem}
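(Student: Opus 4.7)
The plan is to replicate the alteration argument from the proof of Theorem \ref{ujgur} verbatim, with the only substantive change being the use of the $q > 2$ branch of Lemma \ref{HN} instead of the $q = 2$ branch. Because the structure is identical, the proof is essentially mechanical, and the difference in the constant $K(k,c)$ can be tracked back to the change in the estimate of the Gaussian binomial.

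First, I would sample a random subset $S \subseteq \ff_q^n$ of size $m$, where $m$ is to be optimized at the end. Then, for each $k$-flat $F$ with $|F \cap S| \geq c+1$, delete one point of $F \cap S$; the resulting set $S'$ is $(k,c)$-evasive by construction. To bound $\mathbb{E}[|S|-|S'|]$, I use Corollary \ref{numk-flats} together with the $q>2$ case of Lemma \ref{HN}: the number of $k$-flats is $q^{n-k}\begin{bmatrix}n\\ k\end{bmatrix}_q < q^{(n-k)(k+1)} e^{1/(q-2)}$. Each such flat contains $\binom{q^k}{c+1}$ many $(c+1)$-subsets, and each fixed $(c+1)$-subset is entirely sampled with probability $\binom{q^n - c - 1}{m - c - 1}/\binom{q^n}{m} < (m/q^n)^{c+1}$. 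Multiplying these yields
\[
\mathbb{E}[|S|-|S'|] \;<\; q^{(n-k)(k+1)} e^{1/(q-2)} \binom{q^k}{c+1} \left(\frac{m}{q^n}\right)^{c+1}.
\]

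Second, I would maximize the lower bound on $\mathbb{E}[|S'|]$, namely $f(m) = m - C m^{c+1}$, where
\[
C = C(n,k,c,q) := q^{(n-k)(k+1) - n(c+1)} e^{1/(q-2)} \binom{q^k}{c+1}.
\]
This is concave; setting $f'(m)=0$ gives $m = (C(c+1))^{-1/c}$, at which $f(m) = \frac{c}{c+1} m$. Plugging in $C$, the exponent in $n$ simplifies via $(n-k)(k+1) - n(c+1) = -n(c-k) - k(k+1)$, so after raising to the $-1/c$ power the $n$-dependence collapses to $q^{n(1-k/c)}$, and the remaining factor is an $n$-independent constant encoding $\binom{q^k}{c+1}$, $c+1$, and $e^{1/(q-2)}$. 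Rounding $m$ down to the nearest integer costs at most $1$ point, which accounts for the $-1$ in the statement. Finally, the probabilistic method guarantees an instance realizing at least the expected size.

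There is no genuine obstacle beyond bookkeeping; the only point to flag is the contrast with the $q=2$ case, where Lemma \ref{HN} supplies the slightly weaker bound $\begin{bmatrix}n\\ k\end{bmatrix}_2 < 2^{(n-k)k+1} e^{2/3}$ with an extra factor of $2$ and the constant $e^{2/3}$. Removing that factor of $2$ and replacing $e^{2/3}$ by $e^{1/(q-2)}$ in the computation of $(C(c+1))^{-1/c}$ is exactly what produces the $K(k,c)$ stated in the theorem.
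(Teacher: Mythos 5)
Your approach is exactly the paper's: Theorem \ref{ujgur2} is obtained from Theorem \ref{ujgur} by running the identical alteration argument with the $q>2$ branch of Lemma \ref{HN}, and your bookkeeping of $\mathbb{E}[|S|-|S'|]$, the concave optimization in $m$, and the exponent collapse to $q^{n(1-k/c)}$ are all correct. One remark in your last paragraph is off, though: carrying out the computation of $(C(c+1))^{-1/c}$ with the $q>2$ bound $q^{(n-k)(k+1)}e^{1/(q-2)}$ yields the constant
\[
\frac{c}{c+1}\cdot q^{k(k+1)/c}\cdot\left(e^{1/(q-2)}(c+1)\binom{q^k}{c+1}\right)^{-1/c},
\]
with base $q$ in the power and no extra factor of $2$ inside the parenthesis. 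This is \emph{not} the $K(k,c)$ printed in the theorem, which retains the $2^{k(k+1)/c}$ and the inner factor $2$ from the $q=2$ formula; those appear to be transcription artifacts. Since for $q>2$ the constant you actually derive is strictly larger than the one stated, your argument proves a stronger bound, so the theorem as printed is still valid, but your closing sentence claiming the computation "produces the $K(k,c)$ stated in the theorem" should instead flag the discrepancy.
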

	
	\subsection{Sets related to binary and canonical signed-digit representations}
	
	To prove Theorem \ref{binalak_intro}, we will use the following construction.
	
	\begin{definition}\label{lexico}
		Let $m\in [0,2^n]$ be an integer. The \textit{lexicographic construction of $m$ points} is defined as follows. Take every point $P=(x_{n-1}, x_{n-2}, \ldots, x_0)$ in $\F_{2}^n$ for which the $n$-digit binary representation $\overline{x_{n-1}x_{n-2}\ldots x_0}$ represents a number smaller than $m$.\\ The lexicographic construction of $m$ points will be denoted by $\mathcal{L}_m$.
		The binary \textit{support} $\supp(m)$ of $m$ is the set of values $d\ge 0$ such that the binary form of $m$ contains a digit 1 at the place $2^d$.\\
		Finally, let $s_2(m):=|\supp(m)|$, i.e.,   the sum of the digits in the binary representation of $m$.
	\end{definition}
	
	The following lemma is a straightforward consequence of the summation procedure, taking into account the decrease of the digit sum at each carry.
	
	\begin{lemma}\label{bin_sum}   For any natural numbers $a_1$, $a_2$, ..., $a_{\ell}$ we have $s_2\left(\sum_{i=1}^{\ell} a_i\right)\le \sum_{i=1}^{\ell} s_2(a_i)$.
	\end{lemma}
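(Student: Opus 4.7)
The plan is to reduce to the two-summand case and then induct on the number of summands $\ell$.

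For the base case $\ell = 2$, I would argue directly from the schoolbook algorithm for binary addition of $a$ and $b$. At each bit position $i$, a full adder takes three inputs (the $i$-th bit of $a$, the $i$-th bit of $b$, and the carry-in from position $i-1$) and produces two outputs (the $i$-th bit of $a+b$ and the carry-out into position $i+1$). Whenever a carry is generated at some position, two incoming $1$-symbols are absorbed to produce only one outgoing $1$ (shifted up by one position), whereas in the no-carry cases the number of $1$-symbols is preserved. Summing over all positions yields the identity
$$s_2(a) + s_2(b) - s_2(a+b) \;=\; \#\{\text{carries generated in the binary addition of } a \text{ and } b\} \;\ge\; 0.$$
Equivalently, by Kummer's theorem this difference equals the $2$-adic valuation of $\binom{a+b}{a}$, which is a nonnegative integer.

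For the inductive step from $\ell - 1$ to $\ell$, set $A := \sum_{i=1}^{\ell-1} a_i$. Applying the base case and then the induction hypothesis gives
$$s_2\!\left(\sum_{i=1}^{\ell} a_i\right) \;=\; s_2(A + a_\ell) \;\le\; s_2(A) + s_2(a_\ell) \;\le\; \sum_{i=1}^{\ell-1} s_2(a_i) + s_2(a_\ell) \;=\; \sum_{i=1}^{\ell} s_2(a_i),$$
as desired.

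There is no serious obstacle here: the content of the lemma is essentially the well-known carry-monotonicity of binary addition. The only thing to be careful about is the bookkeeping in the base case, namely verifying that each carry genuinely reduces the global count of $1$-digits by exactly one, which matches the hint in the statement of the lemma.
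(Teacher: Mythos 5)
Your proof is correct and takes the same route the paper implicitly relies on: the paper only remarks that the lemma ``is a straightforward consequence of the summation procedure, taking into account the decrease of the digit sum at each carry,'' and your full-adder accounting (each carry absorbs two $1$s and emits one, yielding $s_2(a)+s_2(b)-s_2(a+b)=\#\text{carries}\ge 0$) together with the induction on $\ell$ is exactly that argument made precise.
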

	
	\begin{prop}\label{binalak}	 If $s_2(t)>s_2(m)$ then $[n,m]\not \to [k,t]$, and in particular, $\mathcal{L}_m\subseteq \ff_2^n$ does not induce a $[k,t]$-flat. Moreover, if $H$ is a $k$-flat in $\ff_2^n$, then the size of $H\cap \mathcal{L}_m$ can admit at most  $$1+\sum_{j=0}^{s_2(m)} \binom{k}{j}$$ values.
	\end{prop}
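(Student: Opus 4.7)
The plan is to decompose $\mathcal{L}_m$ as a disjoint union of affine flats indexed by the binary support of $m$, and then exploit the fact that any $k$-flat meets each piece in a set whose size is $0$ or a power of two. Writing $m = \sum_{i=1}^{s} 2^{d_i}$ with $d_1 > d_2 > \cdots > d_s \ge 0$ where $s = s_2(m)$, I would define $F_j \subseteq \ff_2^n$ as the set of points whose binary encoding lies in the interval $\left[\sum_{i<j} 2^{d_i},\, \sum_{i\le j} 2^{d_i}\right)$. A direct inspection of the $n$-digit binary forms shows that $F_j$ is a $d_j$-dimensional affine flat: the digits in positions $d_1,\ldots,d_{j-1}$ are forced to $1$, all other digits in positions $\ge d_j$ are forced to $0$, and the $d_j$ lowest digits are free. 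By construction, $\mathcal{L}_m$ is the disjoint union $F_1 \sqcup F_2 \sqcup \cdots \sqcup F_s$.

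Given any $k$-flat $H \subseteq \ff_2^n$, the intersection $H \cap F_j$ is either empty or an affine flat, and in the latter case its cardinality is $2^{e_j}$ for some $e_j \in \{0,1,\ldots,\min(k,d_j)\}$. Consequently $|H \cap \mathcal{L}_m|$ is a sum of at most $s$ powers of two, each bounded above by $2^k$, so Lemma \ref{bin_sum} yields $s_2(|H \cap \mathcal{L}_m|) \le s = s_2(m)$. Under the hypothesis $s_2(t) > s_2(m)$ no $k$-flat can then satisfy $|H \cap \mathcal{L}_m| = t$, which proves that $\mathcal{L}_m$ induces no $[k,t]$-flat and hence $[n,m] \not\to [k,t]$.

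For the enumeration claim, I would simply count the integers in $[0, 2^k]$ whose binary digit-sum is at most $s_2(m)$: there are $\sum_{j=0}^{s_2(m)}\binom{k}{j}$ such integers strictly below $2^k$ (one for each choice of positions of the $1$-digits within the $k$-bit representation), and the single extra value $2^k$ (arising whenever $s_2(m) \ge 1$) accounts for the additive $1$ in the stated bound. The only nontrivial bookkeeping is the verification that the interval-based definition of $F_j$ really yields an affine flat of dimension $d_j$ and that the $F_j$ partition $\mathcal{L}_m$; I expect this digit-tracking to be the main (though routine) obstacle, while the rest follows cleanly from Lemma \ref{bin_sum} and elementary counting.
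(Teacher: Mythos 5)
Your proof is correct and takes essentially the same route as the paper: it decomposes $\mathcal{L}_m$ into $s_2(m)$ disjoint affine flats of dimensions given by the binary support of $m$, observes that any $k$-flat meets each piece in a flat of power-of-two size, and applies the subadditivity of the binary digit-sum (Lemma \ref{bin_sum}) followed by the elementary count of $k$-bit integers with bounded digit-sum. The only divergence from the paper is purely notational (you index the support in decreasing rather than increasing order).
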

	\begin{proof}
		Let $M=s_2(m)$, and let $m=\sum_{i=1}^M 2^{d_i}$, thus $ \supp(m)=\{d_1<d_2<\ldots <d_M\}$. Then $\mathcal{L}_m$ naturally decomposes into $M$ flats $H_1, H_2, \ldots, H_M$ of dimension $d_{1}, \ldots, d_{M}$, respectively, where the flat $H_j$ of dimension $d_{j}$ is given by the equations
		$$x_k=
		\begin{cases}
			1  & \text{for } k>d_j, k\in \supp(m)\\
			0  & \text{for } k>d_j, k\not \in \supp(m)\\
			0  & \text{for } k=d_j.
		\end{cases}$$
		
		Denote $\dim (H\cap H_i)$ by $r_i$. The size of $H\cap \mathcal{L}_m$ can be expressed as  $$\sum_{i:H_i\cap H\neq \emptyset} 2^{r_i}.$$ 
		
		Since the sets $H\cap H_i$  are disjoint subsets of $H$ and of $\mathcal{L}_m$, we conclude  the sum above is at most $\min \{2^k, m\}$. Moreover, if the sum is smaller than $2^k$, then the binary representation of the sum consists of at most $k$ digits, of which at most $M$ is one, in view of Lemma \ref{bin_sum}. Taking into account the possibility $|H\cap \mathcal{L}_m|=2^k$ as well, this completes the proof.
	\end{proof}
	
	Theorem \ref{binalak_intro} follows immediately from Proposition \ref{binalak}.
	
	Not only the binary digit sum can impose conditions on the  spectrum: the lexicographic construction shows further values of $m$ for which $[k,t]$-flats are avoidable. Suppose that $m$ is a difference of two powers of $2$. While $s_2(m)$ might be large, the lexicographic construction of $m$ points shows that $[k,t]$ is avoidable if $t$ cannot be written as a power of $2$ or a difference of two powers of $2$. It is easy to deduce a statement similar to Proposition \ref{binalak}, after recalling the definition and the properties of the canonical signed-digit binary representation.
	
	Let $s^*_2(m)$ denote the number of nonzero values in the \textit{non-adjacent form} or\textit{ canonical signed-digit (CSD) binary representation} of $m$. This  number system is a signed-digit number system using three digits, $0, 1, \overline{1}$,  which minimizes the number of non-zero digits. $\overline{1}$ corresponds to $-1$. Its main advantage compared to the binary form is that CSD reduces the number of operations in a hardware multiplier. A detailed discussion of CSD can be found in \cite{Reit}. We apply the following facts concerning this numerical system.
	
	\begin{prop}\label{CSD_properties} \ {}
		\begin{itemize} 
			\item  The CSD representation of any integer number can be gained easily from the binary form by iterating the following step: starting from the right, going left,
			each sequence of length at least $3$  consisting of a single $0$ in the first position of the sequence and  continued solely by $1$s are replaced by a sequence of the same length, starting with 1, ending with $\overline{1}$ and containing only $0$ in between.
			\item The CSD representation of any integer number is unique.
			\item There are no two consecutive non-zero digits in the CSD representation of any integer number. Moreover, finite sequences built up by starting with a digit $1$ or $\overline{1}$ (as the leftmost digit) and continuing with zero or more blocks of the form $0$, $01$, or $ 0\overline{1}$ correspond uniquely to the nonzero integers via the CSD representation.
			\item For every integer $n$, the CSD representation contains the least number of nonzero digits compared to any signed-digit binary representation of $n$.
		\end{itemize}
	\end{prop}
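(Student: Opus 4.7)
The plan is to prove the four bullets sequentially, since existence of the CSD form follows from the first, uniqueness from the second, the grammatical description from a repackaging of both, and minimality from a rewriting argument combined with uniqueness.

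For the first bullet, I would verify that the replacement rule $0\underbrace{1\cdots 1}_{\ell-1} \to 1\underbrace{0\cdots 0}_{\ell-2}\overline{1}$ preserves value, since both sides evaluate to $2^{k+\ell-1}-2^k$ when anchored at position $k$. The subtle point is termination: a rewrite may place a fresh $1$ adjacent to another $1$ above it, triggering a cascade. I plan to handle this by reframing the procedure as a single right-to-left sweep that maintains a carry bit in $\{0,1\}$ and commits the next CSD digit in $\{-1,0,+1\}$ based on the current input bit together with the carry; this becomes a finite-state automaton halting in $O(n)$ steps whose output has no two adjacent non-zero digits.

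For the second bullet (uniqueness), I would induct on $|r|$. Given any CSD representation $\sum_i a_i 2^i$ of $r$, the residue $r \bmod 4$ uniquely determines the pair $(a_0,a_1)$: residue $0$ forces $(0,0)$, residue $1$ forces $(1,0)$, residue $3$ forces $(\overline{1},0)$, and residue $2$ forces $a_0=0$ with $a_1\in\{-1,+1\}$, whose sign is then fixed by $r \bmod 8$. Peeling off the lowest one or two digits strictly reduces $|r|$, and the induction hypothesis finishes. The third bullet is then a direct consequence: the algorithm's output matches the stated grammar, and conversely any grammar-compliant word defines a signed binary expansion without adjacent non-zeros, hence a valid CSD form representing a unique integer.

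For the fourth bullet, let $R$ be a signed binary representation of $r$ of minimum non-zero weight and, among these, one with the fewest adjacent non-zero pairs. I claim $R$ has no adjacent non-zeros. Otherwise, pick the rightmost adjacent non-zero pair $(R_{k+1},R_k)$: if their signs are opposite, the value-preserving rewrites $1\overline{1}\to 01$ or $\overline{1}1\to 0\overline{1}$ strictly lower the weight, a contradiction; if their signs agree, the rewrite $11 \to 10\overline{1}$ (or its negative) with a carry sent to position $k+2$ either strictly lowers the weight whenever the carry meets a non-zero entry, or leaves the weight unchanged while shifting the adjacent pair two positions upward, which, iterated, eventually produces a representation with strictly fewer adjacent pairs. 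Either way the extremality of $R$ is contradicted. Hence $R$ is already a CSD form, and by the uniqueness from bullet 2 it coincides with the canonical one, proving minimality. The main obstacle, across bullets 1 and 4, will be rigorously controlling the carry cascades, which I plan to manage by framing both arguments as a bounded sweep preserving an explicit invariant rather than an unstructured chain of local rewrites.
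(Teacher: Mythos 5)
The paper does not prove Proposition~\ref{CSD_properties}: it is stated as a collection of known facts about the non-adjacent form and a reference is given to Reitwiesner's survey~\cite{Reit}, after which the paper immediately uses the last bullet to justify Lemma~\ref{CSDbin_sum}. So there is no ``paper's own proof'' for you to diverge from; you are supplying an argument the authors chose to cite.

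Your sketch is a sound reconstruction of the standard proofs. The uniqueness argument (bullet~2) via $r \bmod 4$ and $r \bmod 8$ is exactly right, and reframing bullet~1 as a right-to-left carry automaton is the correct way to sidestep the non-termination worry in the naive rewrite description. Two places deserve a little more care before this is airtight. First, in bullet~2, you should say explicitly that after peeling $(a_0,a_1)$ the quantity $(r-a_0-2a_1)/4$ has strictly smaller absolute value than $r$ once $|r|\ge 2$, and handle $|r|\le 1$ as the base case; otherwise the induction is not literally well-founded. Second, in bullet~4, ``iterated, eventually produces a representation with strictly fewer adjacent pairs'' needs a finiteness guarantee: a same-sign rewrite that meets a zero keeps the weight and shifts the rightmost adjacent pair upward, and you must argue this cannot recur indefinitely. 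The clean way is to observe that any signed representation of $n$ with exactly $w$ nonzero digits has its top nonzero position bounded by $\lfloor\log_2|n|\rfloor + w - 1$ (since $|n|\ge 2^N - (2^{N-1}+\cdots+2^{N-w+1}) = 2^{N-w+1}$), so among representations of the minimal weight $w$ only finitely many positions are available and the upward drift must terminate, at which point either the weight dropped or the adjacent-pair count did. With that lemma inserted, the extremal argument closes. Alternatively, you could dispense with the rewriting dynamics entirely in bullet~4 and prove minimality by induction on $|n|$ using the same $\bmod\ 4$ case analysis as bullet~2: in each residue class the forced low-order CSD digits contribute no more nonzeros than any competing signed expansion can in those positions, and the induction hypothesis handles the quotient; this avoids carry-cascade bookkeeping altogether and is closer in spirit to how such facts are usually recorded in the literature the paper cites.
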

	
	Table \ref{table:1} presents the CSD representation of the first few positive integers. Note that the digits of the CSD form of $m$ correspond to the coefficients of the powers of $2$ when $m\in \mathbb{Z}^+$ is written (uniquely) as  $m=\sum_{i}\alpha_i 2^{d_i}$ such that $\alpha_i\in \{\pm 1\}$ for all $i$ and no pair of consecutive positive integers appears in the set of exponents of $2$, i.e., $|d_i-d_j|\neq 1$ for all pairs $i,j$.
	
	\begin{center}
		\begin{table}[h!]
			\begin{tabular}{l|l|l|l|l|l|l|l|l|l|l|l|l }
				$n$ &1 &2& 3 &	4	& 5 & 6  & 7  & 8 & 9 & 10 & 11 & 12 \\ \hline
				CSD form of $n$ &1 & 10 & $10\overline{1}$ &	100	& 101  & $10\overline{1}0$  & $100\overline{1}$  & 1000 & 1001  & 1010  & $10\overline{1}0\overline{1}$ &  $10\overline{1}00$
			\end{tabular}\caption{CSD form of positive integers less than 13.}
			\label{table:1}
		\end{table} 
	\end{center}

	We have an analogue of Lemma \ref{bin_sum} for the CSD binary representation, which follows easily from the last property of the CSD representation, formulated above.
	
	\begin{lemma}\label{CSDbin_sum}   For any integer numbers $a_1$, $a_2$, ..., $a_{\ell}$ we have $s_2^*\left(\sum_{i=1}^{\ell} a_i\right)\le \sum_{i=1}^{\ell} s_2^*(a_i)$.
	\end{lemma}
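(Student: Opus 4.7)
The plan is to reduce the inequality to the two-summand case $\ell=2$ by induction on $\ell$, writing $\sum_{i=1}^{\ell} a_i = \bigl(\sum_{i=1}^{\ell-1} a_i\bigr) + a_\ell$. For $\ell=2$, expanding $b$ in CSD form as $b = \sum_{j=1}^{s_2^*(b)} \delta_j\cdot 2^{l_j}$ with $\delta_j\in\{-1,+1\}$ and adding these signed powers of $2$ to $a$ one at a time, it suffices in turn to prove the single-step bound
\[
s_2^*\!\left(n + \sigma\cdot 2^k\right) \;\le\; s_2^*(n) + 1
\]
for every integer $n$, every $\sigma\in\{-1,+1\}$, and every $k\ge 0$.

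For this single-step bound, I would invoke the fourth bullet of Proposition \ref{CSD_properties}: $s_2^*(m)$ is the minimum number of nonzero digits over all signed-digit binary representations of $m$. Hence it suffices to exhibit \emph{some} signed-digit binary representation of $n+\sigma\cdot 2^k$ with at most $s_2^*(n)+1$ nonzero digits. Starting from the CSD of $n$, let $d_k\in\{-1,0,1\}$ denote its digit at position $k$. If $d_k=0$, then placing $\sigma$ at position $k$ produces a signed-digit representation of $n+\sigma\cdot 2^k$ with $s_2^*(n)+1$ nonzero digits. If $d_k=-\sigma$, the digits cancel and we are left with $s_2^*(n)-1$ nonzero digits. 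If $d_k=\sigma$, position $k$ would formally become $2\sigma$; we instead set position $k$ to $0$ and put $\sigma$ at position $k+1$, keeping the nonzero count fixed at $s_2^*(n)$.

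The one place where the structure of CSD genuinely enters is the third case, where one must verify that position $k+1$ is \emph{available} (that is, was $0$ in the CSD of $n$) so that the carry produces a legal signed-digit representation and no further cascade of carries is triggered. This is exactly what the non-adjacency property listed as the third bullet of Proposition \ref{CSD_properties} guarantees: since position $k$ is nonzero in the CSD of $n$, position $k+1$ must be zero. The resulting representation is allowed to violate non-adjacency (for instance between positions $k+1$ and $k+2$), since the fourth bullet only demands a valid signed-digit representation, not a CSD. Iterating the single-step bound through all $s_2^*(b)$ signed powers making up $b$, and then across the $\ell$ summands via the outer induction, yields the stated inequality.
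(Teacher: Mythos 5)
Your proof is correct. The paper itself provides no argument for this lemma---it merely asserts that the inequality ``follows easily from the last property'' of Proposition \ref{CSD_properties}, namely that the CSD representation minimizes the number of nonzero digits among all signed-digit binary representations---and your write-up is a legitimate and careful way to make that assertion precise. The genuine mathematical content you supply is the single-step bound $s_2^*(n+\sigma\cdot 2^k)\le s_2^*(n)+1$ together with its three-case analysis, and in particular the observation that in the case $d_k=\sigma$ the non-adjacency property (third bullet of Proposition \ref{CSD_properties}) guarantees that position $k+1$ of the CSD of $n$ is zero, so the carry halts after a single step and produces a legal signed-digit representation with unchanged nonzero count. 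All three cases check out: the net digit count changes by $+1$, $-1$, and $0$ respectively, and in each case the resulting string still represents $n+\sigma\cdot 2^k$. One cosmetic remark: the two nested inductions (the outer one on $\ell$ and the inner one over the CSD digits of $a_\ell$) are redundant --- you could equivalently expand every $a_i$ into its $s_2^*(a_i)$ signed powers of two at once, obtaining $\sum_i s_2^*(a_i)$ terms, and iterate the single-step bound that many times starting from $0$ --- but this does not affect correctness.
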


	The CSD representation offers a convenient tool to exclude further values from the spectra $Sp(n;k,t).$
	
	\begin{prop}\label{bin_nonadjacent_alak}	 If $s^*_2(t)>s^*_2(m)$ then $[n,m]\not \to [k,t]$. 
	\end{prop}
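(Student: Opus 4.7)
The plan is to construct, for each $m$ with $s^*_2(m) < s^*_2(t)$, a set $S \subseteq \ff_2^n$ of size $m$ whose $k$-dimensional profile lies inside $\{r : s^*_2(r) \leq s^*_2(m)\}$; such a set cannot induce a $[k,t]$-flat. I would mimic the lexicographic construction of Proposition \ref{binalak} but employ \emph{two} such sets, one tracking the $+1$-digits and one tracking the $-1$-digits of the CSD expansion of $m$. Write $m = \sum_{i=1}^{M}\epsilon_i 2^{d_i}$ in CSD form with $M = s^*_2(m)$, $\epsilon_1 = +1$ (forced by $m>0$), and consecutive gaps $d_i - d_{i+1} \geq 2$. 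Split $m = m^{+} - m^{-}$ where $m^{\pm} := \sum_{\epsilon_i = \pm 1} 2^{d_i}$, so that $s_2(m^{+}) + s_2(m^{-}) = M$.

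Provided $m^{+} \leq 2^n$, set $S := \mathcal{L}_{m^{+}} \setminus \mathcal{L}_{m^{-}}$. Since $m^{-} < m^{+}$, we have $\mathcal{L}_{m^{-}} \subseteq \mathcal{L}_{m^{+}}$ and $|S| = m$. By Lemma \ref{profilos}(ii), every $r \in \pf(S)$ is of the form $a-b$ with $a \in \pf(\mathcal{L}_{m^{+}})$ and $b \in \pf(\mathcal{L}_{m^{-}})$. Proposition \ref{binalak} yields $s^*_2(a) \leq s_2(a) \leq s_2(m^{+})$ and similarly $s^*_2(b) \leq s_2(m^{-})$, so Lemma \ref{CSDbin_sum} gives
\[
s^*_2(r) = s^*_2\bigl(a + (-b)\bigr) \leq s^*_2(a) + s^*_2(b) \leq s_2(m^{+}) + s_2(m^{-}) = M.
\]
The geometric estimate $m^{+} \leq 2^{d_1}\bigl(1 + \tfrac14 + \tfrac1{16} + \cdots\bigr) < 2^{d_1+1}$, which follows from the non-adjacency of the $d_i$, ensures $m^{+} \leq 2^n$ as soon as $d_1 \leq n - 1$.

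The main obstacle is the remaining case $d_1 = n$: here $m^{+}$ may exceed $2^n$ and the naive construction overflows $\ff_2^n$. I would handle this by passing to the complement. Set $m' := 2^n - m = -\sum_{i\geq 2}\epsilon_i 2^{d_i}$; flipping all signs preserves uniqueness and non-adjacency, so this is the CSD of $m'$, with $s^*_2(m') = M - 1$ and top position $d_2 \leq n - 2$ (and one checks that $\epsilon_2 = -1$ is forced by $m < 2^n$ together with the same geometric bound, so that $m'>0$). The previous construction, applied to $m'$, yields $T \subseteq \ff_2^n$ of size $m'$ with $\pf(T) \subseteq \{u : s^*_2(u) \leq M - 1\}$. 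Setting $S := \ff_2^n \setminus T$ gives $|S| = m$, and since $|H \cap S| = 2^k - |H \cap T|$ for every $k$-flat $H$, Lemma \ref{CSDbin_sum} yields
\[
s^*_2(|H \cap S|) \leq s^*_2(2^k) + s^*_2(|H \cap T|) \leq 1 + (M - 1) = M.
\]
In either case $\pf(S) \subseteq \{r : s^*_2(r) \leq M\}$ does not contain $t$, establishing $[n, m] \not\to [k, t]$.
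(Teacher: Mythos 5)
Your proof is correct, and it takes a genuinely different route from the paper's. The paper works directly with the single lexicographic set $\mathcal{L}_m$: it decomposes $\mathcal{L}_m$ according to the CSD expansion into $M$ flats $H_1,\dots,H_M$ carrying signs, so that for any $k$-flat $H$ one has $|H\cap \mathcal{L}_m|=\sum_i(-1)^{\alpha_i}2^{r_i}$ with $r_i=\dim(H\cap H_i)$, and then applies Lemma \ref{CSDbin_sum} to that signed sum of powers of two. You instead split the CSD of $m$ into its positive and negative parts $m^{\pm}$ and use the \emph{different} witness set $S=\mathcal{L}_{m^{+}}\setminus\mathcal{L}_{m^{-}}$, invoking Lemma \ref{profilos}(ii) to reduce to the purely binary bound of Proposition \ref{binalak} applied twice. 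Your route is more modular (Prop.\ \ref{binalak} is used as a black box, no signed flat decomposition of $\mathcal{L}_m$ has to be justified), but it pays for this with the overflow case $d_1=n$: because you changed the underlying set, $m^{+}$ can exceed $2^n$, and you need the separate complementation argument — including verifying $\epsilon_2=-1$ via the non-adjacency bound — which the paper's approach avoids entirely since it never leaves $\mathcal{L}_m$. All the steps check out: the contrapositive of Prop.\ \ref{binalak} gives $s_2(a)\le s_2(m^{+})$ for $a\in\pf(\mathcal{L}_{m^{+}})$ (and similarly for $m^{-}$), the sign-flip of the CSD of $-m'$ is indeed the CSD of $m'$ by uniqueness of non-adjacent forms, and the degenerate cases $m^{-}=0$ and $m=2^n$ fall out correctly.
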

	
	\begin{proof}
		The proof follows that of Proposition \ref{binalak}. Take the CSD representation $m=\sum_{i=1}^M \alpha_i 2^{d_i}$ with $d_1<d_2<...<d_M$ and $\alpha_i\in \{\pm 1\}$. We will now provide a construction of $\mathcal{L}_m\subseteq \ff_2^n$ according to this CSD form. For each $1\le j\le M+1$, let $m_j=\sum_{i=j}^M \alpha_i 2^{d_i}$. (Note that $m_1=m$ and $m_{M+1}=0$.) Then one can obtain $\mathcal{L}_m$ by starting from $\mathcal{L}_{m_j}=\emptyset$ for $j=M+1$, and for each $j=M, M-1, ..., 1$ in turn, building $\mathcal{L}_{m_j}$ from $\mathcal{L}_{m_{j+1}}$. Observe that since $2^{d_j}\mid m_{j+1}$, adding the next $2^{d_j}$ points in lexicographical order to $\mathcal{L}_{m_{j+1}}$ (if $\alpha_j=1$), or taking away the greatest $2^{d_j}$ points (if $\alpha_j=-1$), will correspond to adding an $m_j$-flat, or taking it away respectively. This $m_j$-flat will be denoted $H_j$ (for each $M\ge j\ge 1$).
		
		Denote $\dim (H\cap H_i)$ by $r_i$. The size of $H\cap \mathcal{L}_m$ can be expressed then as $$\sum_{i:H_i\cap H\neq \emptyset} \alpha_i 2^{r_i}.$$ By Lemma \ref{CSDbin_sum}, we get that the number of nonzero digits of $|H\cap \mathcal{L}_m|$ is smaller than or equal to that of the CSD representation of $m$.  
	\end{proof}
	
	\begin{prop}\label{th_signed}
		If $0<t<2^k$ then $Sp(n;k,t)$ will not contain those values $m>0$ which have fewer nonzero digits than the number of nonzero digits of $t$ in their respective CSD representation, hence at least $$n+2+\frac12\cdot\sum_{i=2}^{s_2^{*}(t)-1} \left[\binom{n+1-i}{i}2^i+\binom{n+1-i}{i-1}2^{i-1}\right]$$ values are missing from the $(n;k,t)$-spectrum.
	\end{prop}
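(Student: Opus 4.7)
The first assertion, that any $m>0$ with $s_2^*(m)<s_2^*(t)$ lies outside $Sp(n;k,t)$, is immediate from Proposition \ref{bin_nonadjacent_alak}, so my plan concentrates on the enumeration. The idea is to stratify the missing values by $i:=s_2^*(m)$ and count integers $m\in[0,2^n]$ with $s_2^*(m)=i$ for each $i\in\{0,1,\ldots,s_2^*(t)-1\}$. The stratum $i=0$ contributes $m=0$ (missing from the spectrum since $t>0$) and the stratum $i=1$ contributes $m=2^d$ for $d=0,1,\ldots,n$ (with $m=2^n$ also missing since $t<2^k$), providing the leading $n+2$ term; the summation will account for the strata $i\geq 2$.

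For $i\geq 2$, I would enumerate positive $m\leq 2^n$ with $s_2^*(m)=i$ by splitting on the position $p$ of the leading (topmost) nonzero CSD digit. By a geometric series estimate, the contribution of the lower digits is bounded in absolute value by $2^{p-2}+2^{p-4}+\cdots<\tfrac{2^p}{3}$, so the leading digit must be $+1$ to force $m>0$. In the subcase $p\leq n-1$, the full CSD sits inside positions $\{0,\ldots,n-1\}$, and the same estimate gives $m<\tfrac{4}{3}\cdot 2^{n-1}<2^n$ automatically. Counting the placements of $i$ non-adjacent nonzero positions in $\{0,\ldots,n-1\}$ yields $\binom{n-i+1}{i}$ patterns, each endowed with $2^{i-1}$ sign choices for the $i-1$ non-leading nonzero digits, totalling $\binom{n+1-i}{i}\cdot 2^{i-1}$ integers.

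In the subcase $p=n$, position $n-1$ is forced to zero and the remaining $i-1$ non-adjacent nonzero digits live in positions $\{0,\ldots,n-2\}$, giving $\binom{n+1-i}{i-1}\cdot 2^{i-1}$ inner CSD strings; the condition $m\leq 2^n$ then translates into requiring that the inner string represents a nonpositive integer. Since $i-1\geq 1$, the inner string is nonzero, so the involution $x\mapsto -x$ on CSD strings is fixed-point free and halves the count to $\binom{n+1-i}{i-1}\cdot 2^{i-2}$. Adding the two subcases yields
\[
\binom{n+1-i}{i}\cdot 2^{i-1}+\binom{n+1-i}{i-1}\cdot 2^{i-2}=\tfrac{1}{2}\Bigl[\binom{n+1-i}{i}2^{i}+\binom{n+1-i}{i-1}2^{i-1}\Bigr],
\]
matching the summand in the proposition. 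Uniqueness of the CSD representation (Proposition \ref{CSD_properties}) rules out double-counting both within and across strata. The main obstacle is the boundary case $p=n$: one must correctly translate the bound $m\leq 2^n$ into a sign condition on the inner string and rely on $i\geq 2$ so that the sign-flipping involution genuinely halves the count; everything else is a routine enumeration of non-adjacent sign patterns.
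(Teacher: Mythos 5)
Your proof is correct and follows essentially the same route as the paper's: invoke Proposition \ref{bin_nonadjacent_alak} for the first assertion, then stratify by $s_2^*(m)$ and split the count for $i\ge 2$ according to whether position $n$ carries a nonzero digit, which is exactly the paper's case distinction on $x_n$. The only differences are presentational — the paper phrases the count of non-adjacent sign patterns as arrangements of blocks $0$, $01$, $0\overline{1}$ and forces the leading block directly, whereas you count placements of non-adjacent positions and use a geometric-series bound plus the sign-flip involution to impose the constraints $m>0$ and $m\le 2^n$; both yield identical summands. You are in fact slightly more careful than the paper's printed argument in explicitly accounting for the $i=0$ stratum ($m=0$), which is needed to obtain $n+2$ rather than $n+1$ in the leading term.
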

	
	\begin{proof}
		We have to count those canonical signed-digit representations $\overline{x_nx_{n-1}...x_0}$ of value in $[1,2^n]$ which have $i\in [1,s_2^*(t)-1]$ nonzero digits. If $i=1$ then there are $n+1$ such representations. If $i\ge 2$ then consider two cases. If $x_n=0$ then $\overline{x_n...x_0}$ is a sequence of $i$ blocks $01$ or $0\overline{1}$ and $n+1-2i$ blocks $0$ in some order, but the first two-digit block must be $01$ (so that the number is positive), so there are $\frac12\cdot \binom{n+1-i}{i} 2^i$ such representations. If $x_n=1$ then $\overline{x_{n-1}...x_0}$ is a sequence of $i-1$ blocks $01$ or $0\overline{1}$ and $n+2-2i$ blocks $0$ in some order, but the first two-digit block must be $0\overline{1}$ (so that the number is less than $2^n$), so there are $\frac12\cdot \binom{n+1-i}{i-1}2^{i-1}$ such representations. Accounting also for the case $m=0$, this finishes the proof.
	\end{proof}
	
	\subsection{Combination}
	
	The propositions above can be combined with the former results on $(k, c)$-evasive sets. Indeed, consider a lexicographic construction $\mathcal{L}_m$ which avoids $[k,t]$-flats for all $t\in [t_1, t_2]$. Take a $(k, c)$-subspace evasive set $S$ disjoint from  $\mathcal{L}_m$. Then   $\mathcal{L}_m\cup S $ provides a set of size $m+|S|$ which avoids $[k,t]$-flats for all $t\in [t_1+c, t_2]$.
	
	Applying our improvement Theorem \ref{ujgur} on $(k,c)$-evasive sets, we obtain the following result.
	
	\begin{theorem}\label{combination_addition}
		Let $k$ and $t$ be fixed integers such that $1\leq t\leq 2^k-1$ and suppose that either  $s_2(t)-~ s_2(m)> \lfloor\log_2 c\rfloor+1$ or $s_2^*(t)-~ s_2^*(m)> \left\lceil \frac12\left(\log_2 c+1\right)\right \rceil$ holds for a pair of positive integers $m, c$ with $c>k$.
		Then $[n,m']\not\to [k,t]$ holds for every integer $m'\in \left[m, m+\left(K\cdot \frac{2^n-m}{2^n}\cdot 2^{n\left(1-\frac{k}{c}\right)}\right)\right]$, where $K$ is a positive constant depending only on $k$ and $c$.
	\end{theorem}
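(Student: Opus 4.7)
The plan is to exhibit, for each target cardinality $m'$ in the stated interval, a set $S\subseteq \ff_2^n$ of size $m'$ that contains no $[k,t]$-flat. I would layer two ingredients: the lexicographic set $\mathcal{L}_m$ from Definition \ref{lexico}, and a large $(k,c)$-evasive set $E$ sitting inside the complement $Y:=\ff_2^n\setminus \mathcal{L}_m$. Once both are available, choosing any $E'\subseteq E$ with $|E'|=m'-m$ and setting $S:=\mathcal{L}_m\cup E'$ produces a candidate of the right cardinality.

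To show $S$ induces no $[k,t]$-flat, I would fix an arbitrary $k$-flat $H$ and control the nonzero digits of
\[
|H\cap S|=|H\cap \mathcal{L}_m|+|H\cap E'|.
\]
The lexicographic summand is handled by Proposition \ref{binalak} (respectively Proposition \ref{bin_nonadjacent_alak}), which gives $s_2(|H\cap \mathcal{L}_m|)\le s_2(m)$ (respectively $s_2^{*}(|H\cap \mathcal{L}_m|)\le s_2^{*}(m)$). The evasive summand contributes at most $c$ points to $H$; in the binary case $s_2(i)\le \lfloor \log_2 c\rfloor+1$ trivially for every $i\in[0,c]$, and in the CSD case the non-adjacency of nonzero digits from Proposition \ref{CSD_properties} yields the sharper bound $s_2^{*}(i)\le \lceil(\log_2 c+1)/2\rceil$ on the same range. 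Applying Lemma \ref{bin_sum} or Lemma \ref{CSDbin_sum} then forces either $s_2(|H\cap S|)<s_2(t)$ or $s_2^{*}(|H\cap S|)<s_2^{*}(t)$ under the respective hypothesis, so $|H\cap S|\ne t$, as required.

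The remaining task is to produce an evasive set $E\subseteq Y$ of size at least $K\cdot \frac{2^n-m}{2^n}\cdot 2^{n(1-k/c)}$. I would adapt the alteration argument of Theorem \ref{ujgur}: sample $m^{*}$ points uniformly at random from $Y$, which has $N:=2^n-m$ elements, and delete one point from every $k$-flat whose intersection with the sampled set exceeds $c$. Since $|H\cap Y|\le|H|=2^k$ for every $k$-flat $H$, the expected number of deletions is at most
\[
\#\{k\text{-flats}\}\cdot \binom{2^k}{c+1}\left(\frac{m^{*}}{N}\right)^{c+1},
\]
and optimizing in $m^{*}$ exactly as in the proof of Theorem \ref{ujgur}, with $N$ playing the role of $2^n$, yields an evasive set of the advertised size with a constant $K=K(k,c)>0$ of the same shape as there. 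I expect the main technical point to be verifying that the linear factor $N$ in front survives the optimization, so that the final size carries the factor $\frac{2^n-m}{2^n}$ and not merely $2^{n(1-k/c)}$; this should go through smoothly since only the sampling denominator changes from $2^n$ to $N$, while the total number of $k$-flats and the per-flat bound $\binom{2^k}{c+1}$ are inherited unchanged from Theorem \ref{ujgur}.
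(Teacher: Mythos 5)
Your decomposition $S = \mathcal{L}_m \cup E'$ and the digit-sum bookkeeping via Propositions \ref{binalak} and \ref{bin_nonadjacent_alak} together with Lemmas \ref{bin_sum} and \ref{CSDbin_sum} exactly reproduce the paper's argument (the paper phrases the decomposition as $\pf(\mathcal{L}_m\cup S)\subseteq \pf(\mathcal{L}_m)+\pf(S)$ via Lemma \ref{profilos}, but it is the same calculation). Where you deviate, and where the technical point you flag at the end really does bite, is in producing the evasive set inside $Y=\ff_2^n\setminus \mathcal{L}_m$.

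When you rerun the alteration inside $Y$ of size $N=\theta\cdot 2^n$ with $\theta=\frac{2^n-m}{2^n}$, the number of $k$-flats and the per-flat bound $\binom{2^k}{c+1}$ do not shrink, so the expected number of deletions scales like $(m^{*}/N)^{c+1}$. Optimizing in $m^{*}$ gives $m^{*}_{\mathrm{opt}}\sim N^{(c+1)/c}/A^{1/c}$ with $A\sim 2^{(n-k)(k+1)}$, and the surviving evasive set has size on the order of
\[
\theta^{(c+1)/c}\cdot 2^{n(1-\frac{k}{c})},
\]
not $\theta\cdot 2^{n(1-\frac{k}{c})}$; you lose a factor $\theta^{1/c}$. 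Under the binary hypothesis this is harmless, since $s_2(m)\le s_2(t)-\lfloor\log_2 c\rfloor-2\le k-1$ forces $\theta\ge 2^{-(k-1)}$, so $\theta^{1/c}$ is bounded below by a constant in $k,c$. But under the CSD hypothesis $\theta$ can be exponentially small in $n$ while $s_2^{*}(m)$ stays small (for instance $m=2^n-2^b$ has $s_2^{*}(m)=2$ yet $\theta=2^{b-n}$), and then $\theta^{(c+1)/c}2^{n(1-k/c)}$ can drop below $1$ while $\theta\cdot 2^{n(1-k/c)}$ is still large, so your construction yields no nontrivial interval and the stated theorem is not recovered. The paper avoids this by not re-sampling inside $Y$: it fixes a $(k,c)$-evasive set $E$ of size $\approx K\cdot 2^{n(1-k/c)}$ directly from Theorem \ref{ujgur} and averages $|E\cap(w+\mathcal{L}_m)|$ over translates $w\in\ff_2^n$. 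The average is $|E|\cdot m/2^n$, so some translate $w_0+\mathcal{L}_m$ misses at least a $\theta$-fraction of $E$, and since translation preserves the $k$-dimensional profile the digit-sum argument applies verbatim to $w_0+\mathcal{L}_m$. Replacing your re-sampling step with this translation-averaging step closes the gap.
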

	
	\begin{proof}
		First we prove the statement for the digit sum $s_2$. Consider the lexicographic construction $\mathcal{L}_m$ on $m$ points, and a $(k,c)$-evasive set $S$ in $\ff_2^n$ disjoint from $\mathcal{L}_m$.
		
		Lemma \ref{profilos} implies that if the union of $\mathcal{L}_m$ and  $S$ induces  a $[k,t]$-flat, that is,  $t\in \pf(\mathcal{L}_m\cup S)$, then $t\in \pf(\mathcal{L}_m)+\pf(S)$  holds as well.
		$s_2(z)\leq s_2(m)$ holds for every $z\in \pf(\mathcal{L}_m)$. On the other hand, every $k$-flat intersects $S$ in at most $c$ points, thus $\pf(S)\subseteq [0, c]$. Consequently, $s_2(t)\leq s_2(m)+\lfloor \log_2 c\rfloor+1$ for every $t\in \pf(\mathcal{L}_m\cup S)$ in view of the subadditive property of the digit sum function $s_2$, Lemma \ref{bin_sum}. (Note that any element $t'\in [0,c]$ satisfies $s_2(t')\le \lfloor \log_2 c\rfloor+1$.)
		We thus get that if $s_2(t)> s_2(m)+\lfloor \log_2 c\rfloor+1$, then we can add a $(k,c)$-evasive set to $\mathcal{L}_m$ to gain a set which does not induce a $[k,t]$-flat. If $s_2(t)-s_2(m)> \lfloor\log_2 (k+1)\rfloor+1$, the size of this subspace evasive set can be exponentially large in $n$, according to Theorem \ref{ujgur}. Indeed, we can find a set of size $K\cdot 2^{n\left(1-\frac{k}{c}\right)}$, and we can keep at least a fraction $\frac{2^n-m}{2^n}$ of it to obtain a disjoint set from $\mathcal{L}_m$. This follows from the fact that we might consider any translate $w+\mathcal{L}_m$ of the lexicographic construction for $w\in \ff_2^n$, and on average these contain a fraction $\frac{m}{2^n}$ of the evasive set.\\
		All in all, we get a set which avoids $[k,t]$-flats and has size $m+s$ for each integer $$0\le s\le \frac{2^n-m}{2^n}\cdot \left(\left\lfloor K(k,c)\cdot  2^{n\left(1-\frac{k}{c}\right)}\right\rfloor-1\right),$$ where $K(k,c)=\frac{c}{c+1}\cdot 2^{k(k+1)/c}\cdot \left(2e^{2/3}(c+1)\binom{2^k}{c+1}\right)^{-\frac{1}{c}}$.
		
		The proof of the statement for $s_2^*$ is identical, except that now any $t'\in [0,c]$ will satisfy $s_2^*(t')\le \left\lceil \frac12(\log_2 c+1)\right\rceil$, since the number of digits in the CSD of a positive integer $t'$ is at most $\lceil\log_2 t'\rceil+1$, and for any two neighbouring digits, at most one of them is nonzero.
	\end{proof}
	
	We showed that if $s_2(m)$ is small compared to $s_2(t)$, then the integers in a certain right neighbourhood of $m$ will also not be in the spectrum $Sp(n;k,t)$.
	
	A similar construction can be obtained by erasing a $(k, c)$-evasive set $S$ from a lexicographic construction $\mathcal{L}_m$ which avoids $[k,t]$-flats for all $t\in [t_1, t_2]$. Then $\mathcal{L}_m\setminus S$ provides a set of size $m-|S|$ which avoids $[k,t]$-flats for all $t\in [t_1, t_2-c]$.
	
	\begin{theorem}\label{combin_diff} Let $k$ and $t$ be fixed integers such that $1\leq t\leq 2^k-1$ and suppose that $s_2^*(t)-~s_2^*(m)> \left\lceil \frac12\left(\log_2 c+1\right)\right \rceil$ holds for a pair of positive integers $m, c$.
		Then $[n,m']\not\to [k,t]$ holds for every integer $m'\in \left[m-\left(K\cdot \frac{m}{2^n}\cdot 2^{n\left(1-\frac{k}{c}\right)}\right), m\right]$, where $K$ is a positive constant depending only on $k$ and $c$.
	\end{theorem}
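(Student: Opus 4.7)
The plan is to mimic the proof of Theorem \ref{combination_addition}, but replace the union with a set-difference operation. The advantage of the CSD representation is that Lemma \ref{CSDbin_sum} applies to arbitrary signed integer sums, so it controls $s_2^*$ under subtraction as well as addition; this is precisely why the statement is phrased only for $s_2^*$ (not $s_2$).

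First I would fix the lexicographic set $\mathcal{L}_m\subseteq \ff_2^n$ of size $m$. By Proposition \ref{bin_nonadjacent_alak} (or rather its underlying argument), every element $\alpha\in \pf(\mathcal{L}_m)$ satisfies $s_2^*(\alpha)\le s_2^*(m)$. Next I would invoke Theorem \ref{ujgur} to produce a $(k,c)$-evasive set $S_0\subseteq \ff_2^n$ of size at least $K'\cdot 2^{n(1-k/c)}$ for a suitable constant $K'=K'(k,c)$. To force $S_0$ to lie inside $\mathcal{L}_m$, I would average over translates: for a uniformly random $w\in \ff_2^n$,
\begin{equation*}
\mathbb{E}_w\bigl[|(w+S_0)\cap \mathcal{L}_m|\bigr]=|S_0|\cdot \frac{m}{2^n},
\end{equation*}
so some translate $w+S_0$ meets $\mathcal{L}_m$ in a set $S$ of size at least $K'\cdot \frac{m}{2^n}\cdot 2^{n(1-k/c)}$. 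Since $S\subseteq w+S_0$ and every subset of a $(k,c)$-evasive set is itself $(k,c)$-evasive, $S$ is $(k,c)$-evasive and contained in $\mathcal{L}_m$.

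Now for any integer $s$ with $0\le s\le |S|$, pick an arbitrary $S'\subseteq S$ of size $s$; $S'$ is $(k,c)$-evasive, so $\pf(S')\subseteq [0,c]$. By Lemma \ref{profilos}(ii),
\begin{equation*}
\pf(\mathcal{L}_m\setminus S')\subseteq \pf(\mathcal{L}_m)-\pf(S').
\end{equation*}
Every $\beta\in [0,c]$ has $s_2^*(\beta)\le \bigl\lceil \tfrac12(\log_2 c+1)\bigr\rceil$, which one can verify by noting that a CSD representation of an integer in $[0,c]$ has length at most $\lfloor\log_2 c\rfloor+2$ and no two consecutive nonzero digits. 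Combining this with the previous bound and Lemma \ref{CSDbin_sum} applied to the signed sum $\alpha-\beta$,
\begin{equation*}
s_2^*(\alpha-\beta)\le s_2^*(\alpha)+s_2^*(\beta)\le s_2^*(m)+\bigl\lceil \tfrac12(\log_2 c+1)\bigr\rceil < s_2^*(t),
\end{equation*}
by hypothesis. Hence $t\notin \pf(\mathcal{L}_m\setminus S')$, i.e.\ $\mathcal{L}_m\setminus S'$ induces no $[k,t]$-flat. As $|\mathcal{L}_m\setminus S'|=m-s$ and $s$ runs over $[0,|S|]$, we obtain $[n,m']\not\to [k,t]$ for every integer $m'$ in the advertised interval, with $K$ obtained from $K'$ after absorbing the implicit rounding.

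The only nonroutine point is confirming that integers in $[0,c]$ really do satisfy the uniform bound $s_2^*\le \lceil\tfrac12(\log_2 c+1)\rceil$; checking a few small cases ($c=3$ gives $s_2^*(3)=2$ matching the bound, $c=7$ gives $s_2^*(x)\le 2$ for all $x\in[0,7]$ matching $\lceil 3.81/2\rceil=2$, etc.) suggests the bound is tight and follows immediately from the ``no two consecutive nonzero digits'' property of Proposition \ref{CSD_properties}. Everything else is a direct transcription of the proof of Theorem \ref{combination_addition} with $\cup$ replaced by $\setminus$.
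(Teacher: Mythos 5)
Your proposal is correct and follows essentially the same route as the paper's proof: delete a $(k,c)$-evasive set from $\mathcal{L}_m$, use Lemma \ref{profilos}(ii) together with the subadditivity of $s_2^*$ (Lemma \ref{CSDbin_sum}), and average over translates to carve out an evasive subset inside $\mathcal{L}_m$ of the right size. In fact you spell out the translation/averaging step more explicitly than the paper, which merely points back to Theorem \ref{combination_addition}.

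One small caveat: your stated justification for the uniform bound $s_2^*(\beta)\le \lceil\tfrac12(\log_2 c+1)\rceil$ on $[0,c]$ — ``CSD has length at most $\lfloor\log_2 c\rfloor+2$ and no two consecutive nonzero digits'' — only yields $\lceil(\lfloor\log_2 c\rfloor+2)/2\rceil$, which exceeds the claimed bound by one when $c$ is an odd power of $2$ (e.g.\ $c=8$: your count gives $3$, but the true maximum of $s_2^*$ on $[0,8]$ is $2$). The bound is nevertheless correct; the clean way to see it is that the smallest positive integer with $s_2^*=j\ (j\ge 2)$ is $(2^{2j-1}+1)/3 > 2^{2j-3}$, so $c\ge\beta>2^{2j-3}$ forces $\lceil\tfrac12(\log_2 c+1)\rceil\ge j$. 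Since the paper also states this inequality without proof, this is a very minor point, but the length-based argument as written does not quite close it.
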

	
	\begin{proof}
		Consider the lexicographic construction $\mathcal{L}_m$, and a $(k,c)$-evasive set $S$ in $\ff_2^n$ which is a subset of $\mathcal{L}_m$.
		
		By Lemma \ref{profilos}, if $t\in \pf(\mathcal{L}_m\setminus S)$, then $t\in \pf(\mathcal{L}_m)-\pf(S)$ too.
		$s_2^*(z)\leq s_2^*(m)$ holds for every $z\in \pf(\mathcal{L}_m)$. On the other hand, every $k$-flat intersects $S$ in at most $c$ points, thus $\pf(S)\in [0, c]$. Consequently, $s^*_2(t)\leq s^*_2(m)+\left\lceil \frac12\left(\log_2 c+1\right)\right \rceil$ for every $t\in \pf(\mathcal{L}_m\setminus S)$ in view of subadditive property of the  function $s_2^*$, Lemma \ref{CSDbin_sum}.
		We thus get that if $s_2^*(t)> s_2^*(m)+\left\lceil \frac12\left(\log_2 c+1\right)\right \rceil$, then we can delete a $(k,c)$-evasive set from $\mathcal{L}_m$ to gain a set which does not induce a $[k,t]$-flat. By Theorem \ref{ujgur}, we can find a $(k,c)$-evasive set of size $K\cdot 2^{n\left(1-\frac{k}{c}\right)}$, and we can keep at least a fraction $\frac{m}{2^n}$ of it to obtain a subset of $\mathcal{L}_m$, similarly to the proof of Theorem \ref{combination_addition}. \\
		So the resulting set avoids $[k,t]$-flats and has size $m-s$ for each $0\le s\le \frac{m}{2^n}\cdot \left(\left\lfloor K(k,c)\cdot 2^{n\left(1-\frac{k}{c}\right)}\right\rfloor-1\right)$.
	\end{proof}
	
	The two theorems above imply Theorem \ref{combination_final}.
	
	\section{Avoiding intersection sizes equal to a power of $2$}\label{power2_flats_section}
	
	In the previous section, we proved that $[k,t]$-flats are easier to avoid if the binary representation of $t$ contains many digits equal to $1$. Here we discuss the case when $s_2(t)$ is $1$ or $2$. The cornerstone will be a simple theorem which states that if a subset of $\ff_2^n$
	contains a large proportion of the vectors of the space then it must contain a large affine subspace as well.
	
	\subsection{Finding $[k,2^k]$-flats}
	
	We will first consider the case of $t=2^k$, which means that we want to find a full affine subspace of dimension $k$ in $S$. Recall that in Section \ref{small_cases_section}, we have discussed the cases $k=1$ and $k=2$, with the case $k=2$ corresponding to Sidon sets. The following theorem extends the result of Bose and Ray-Chaudhuri, presented  in Proposition \ref{sidon_prop}(i).  
	
	\begin{theorem}\label{order2}
		Given integers $n\ge k\ge 1$, and any $m\ge \frac52\cdot 2^{n\left(1-\frac{1}{2^{k-1}}\right)}$, we have $[n,m]\to [k,2^k]$.
	\end{theorem}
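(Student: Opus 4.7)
The plan is to prove Theorem \ref{order2} by induction on $k$. The base case $k=1$ is immediate, since $m \geq 5/2$ forces $m \geq 2$ and any two points of $S$ span a $[1,2]$-flat.

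For the inductive step, assume the result for $k-1$ and let $S \subseteq \ff_2^n$ with $|S| = m \geq \tfrac{5}{2} \cdot 2^{n(1-1/2^{k-1})}$. I would begin with a standard pigeonhole on pairwise differences: since $\sum_{v \ne 0} |S \cap (S-v)| = m(m-1)$, some $v \ne 0$ admits $|S \cap (S-v)| \geq m(m-1)/(2^n - 1)$. Setting $S^* := S \cap (S-v)$, one sees that $a \in S^*$ forces both $a \in S$ and $a+v \in S$, hence also $a+v \in S^*$; that is, $S^*$ is closed under translation by $v$. Consider the projection $\pi : \ff_2^n \to \ff_2^n/\langle v \rangle \cong \ff_2^{n-1}$ and set $\bar S := \pi(S^*)$. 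Then $|\bar S| = |S^*|/2$ and $\pi^{-1}(\bar S) = S^* \subseteq S$.

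Applying the inductive hypothesis to $\bar S \subseteq \ff_2^{n-1}$ with parameter $k-1$ yields a $(k-1)$-flat $\bar F \subseteq \bar S$. Its preimage $\pi^{-1}(\bar F)$ is then a $k$-flat in $\ff_2^n$, consisting of two parallel $(k-1)$-flats differing by a translation by $v$, and is entirely contained in $\pi^{-1}(\bar S) = S^* \subseteq S$. This produces the required $[k, 2^k]$-flat.

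The principal obstacle is bookkeeping of the constant $\tfrac{5}{2}$ through the induction. The crucial identity $2(1 - 1/2^{k-1}) = 1 + (1 - 1/2^{k-2})$ shows that squaring the hypothesis yields $m^2 \geq \tfrac{25}{4} \cdot 2^n \cdot 2^{n(1 - 1/2^{k-2})}$, so
\[
|\bar S| \;\geq\; \tfrac{25}{8} \cdot 2^{n(1 - 1/2^{k-2})} - \tfrac{m}{2 \cdot 2^n}.
\]
The hypothesis needed at dimension $n-1$ for parameter $k-1$ is $|\bar S| \geq \tfrac{5}{2} \cdot 2^{(n-1)(1-1/2^{k-2})}$, which is bounded above by $\tfrac{5}{2} \cdot 2^{n(1-1/2^{k-2})}$. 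The gap $\tfrac{25}{8} - \tfrac{5}{2} = \tfrac{5}{8}$ leaves sufficient slack to absorb the correction $m/(2 \cdot 2^n) \leq 1/2$ for every $k \geq 2$, closing the induction.
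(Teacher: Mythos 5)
Your proof is correct and follows essentially the same approach as the paper: pigeonhole on differences to find a popular direction $v$, project the pairs with difference $v$ to the quotient $\ff_2^n/\langle v\rangle$, lift a $(k-1)$-flat found by induction, and track the constant through the recursion. The paper manages the constant by introducing recursive thresholds $C_k$ (with $C_1=2$, $C_2=\tfrac52$, then decreasing), whereas you verify directly that the slack $\tfrac{25}{8}-\tfrac52=\tfrac58$ absorbs the additive correction and the $(n-1)$-versus-$n$ loss at every step; both are valid bookkeeping schemes for the same inductive argument.
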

	The result points out that for a fixed integer $k$, sets with given positive density will always contain a $k$-flat if $n$ is large enough. We mention that the result and its proof is closely related to Szemerédi's Cube Lemma, see \cite[Corollary 2.1]{Setyawan}. The statement  of Theorem \ref{order2}  follows from the work of Bonin and Qin \cite{Bonin}. We present a proof as it will be referred to during the proofs of the main results of the chapter.
	
	\begin{lemma}\label{seged}
		Suppose that $|S|\ge D\cdot 2^{\alpha n}+1$ for a set $S\subseteq \ff_2^n$, where $\frac12\le \alpha<1$ and $D>0$ are real numbers. Then there exists a nonzero vector $d\in \ff_2^n$ such that the number of unordered pairs of vectors in $S$ with difference $d$ is at least $\frac{D^2}{2}\cdot 2^{(2\alpha-1)n}$.
	\end{lemma}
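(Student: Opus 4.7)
The plan is to use a simple double-counting / pigeonhole argument over the nonzero differences in $\ff_2^n$. Every unordered pair $\{x,y\}$ of distinct elements of $S$ determines a unique nonzero difference $d=x-y\in\ff_2^n\setminus\{0\}$ (recall that in $\ff_2^n$ we have $x-y=y-x$, so a pair really does give one well-defined nonzero vector). There are exactly $2^n-1$ possible values of $d$, so if we let $N(d)$ denote the number of unordered pairs of vectors in $S$ with difference $d$, then
\[
\sum_{d\in\ff_2^n\setminus\{0\}} N(d)\;=\;\binom{|S|}{2}.
\]

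Next, I would estimate the right-hand side from below using the hypothesis $|S|\ge D\cdot 2^{\alpha n}+1$. Since then $|S|-1\ge D\cdot 2^{\alpha n}$ and $|S|\ge D\cdot 2^{\alpha n}$, we get
\[
\binom{|S|}{2}=\frac{|S|(|S|-1)}{2}\;\ge\;\frac{D^2\cdot 2^{2\alpha n}}{2}.
\]
Dividing by the number of nonzero differences $2^n-1<2^n$, some $d$ must satisfy
\[
N(d)\;\ge\;\frac{1}{2^n-1}\cdot\frac{D^2\cdot 2^{2\alpha n}}{2}\;>\;\frac{D^2}{2}\cdot 2^{(2\alpha-1)n},
\]
which is exactly the claimed bound. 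Note that the restriction $\alpha\ge \tfrac12$ only ensures that the lower bound is a nontrivial quantity ($\ge D^2/2$), while $\alpha<1$ plays no role in this lemma itself but matters for the regime in which it will later be applied.

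Since the whole argument is a one-line pigeonhole on $\binom{|S|}{2}\big/(2^n-1)$, there is no real obstacle; the only thing to be mildly careful about is the constant factor. The ``$+1$'' in the hypothesis $|S|\ge D\cdot 2^{\alpha n}+1$ is precisely what lets me bound both $|S|$ and $|S|-1$ by $D\cdot 2^{\alpha n}$ and thus avoid a lower-order loss, and the strict inequality $2^n-1<2^n$ is enough to absorb the factor $1$ lost in the pigeonhole. No further tools are needed.
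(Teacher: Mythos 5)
Your proof is correct and is essentially identical to the paper's: the same double count of $\binom{|S|}{2}$ unordered pairs over the $2^n-1$ nonzero differences, the same lower bound $|S|(|S|-1)\ge D^2\cdot 2^{2\alpha n}$ coming from the ``$+1$'' in the hypothesis, and the same pigeonhole step using $2^n-1<2^n$. Nothing is missing.
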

	
	\begin{proof} Taking the elements of $S$ pairwise, they form $\binom{|S|}{2}$ differences with multiplicity. The number of possible differences is $2^n-1$, so there must exist a difference appearing at least $$\frac{\binom{|S|}{2}}{2^n-1}>\frac{\binom{|S|}{2}}{2^n}\ge \frac{D^2\cdot 2^{2\alpha n}}{2^{n+1}}=\frac{D^2}{2}\cdot 2^{(2\alpha-1)n}$$ times.
	\end{proof}
	
	\begin{proof}[Proof of Theorem \ref{order2}]
		We prove that there exist constants $0<C_k\le \frac52$ for each $k\ge 1$ such that if $|S|\subseteq \ff_2^n$ has $|S|\ge C_k\cdot 2^{n\left(1-\frac{1}{2^{k-1}}\right)}$ then $S$ contains a  $k$-flat. For $k=1$, the result holds with $C_1=2$, as any two points form a $1$-flat.
		
		Now suppose that $k\ge 2$. If we know that $|S|\ge D_k\cdot  2^{n\left(1-\frac{1}{2^{k-1}}\right)}+1$ for a certain constant $D_k>0$, then we can apply Lemma \ref{seged} to get a difference $d_1$ which appears at least $\frac{D_k^2}{2}\cdot 2^{n\left(1-\frac{1}{2^{k-2}}\right)}$ times. We factorize the space by this vector and define a set $S_1\subseteq \ff_2^n/\langle d_1\rangle$ which consists of the $\langle d_1\rangle$-cosets with both elements lying in $S$. Then $|S_1|\ge \frac{D_k^2}{2}\cdot 2^{n\left(1-\frac{1}{2^{k-2}}\right)}$.
		
		Observe that if we find a $[k-1, 2^{k-1}]$-flat induced by $S_1$, then it can be lifted to a $[k,2^k]$-flat induced by $S$. So to find such a $(k-1)$-flat and finish the proof, it suffices to refer to the induction hypothesis for the subset $S_1$ of the $(n-1)$-dimensional space $\ff_2^n/\langle d_1\rangle$. For this, it suffices to have $\frac{D_k^2}{2}\cdot 2^{n\left(1-\frac{1}{2^{k-2}}\right)}\ge C_{k-1}\cdot 2^{(n-1)\left(1-\frac{1}{2^{k-2}}\right)}$.
		
		This is equivalent to $D_k\ge \sqrt{C_{k-1}}\cdot 2^{\frac{1}{2^{k-1}}}$, so we need a value $C_k$ such that $C_k\cdot 2^{n\left(1-\frac{1}{2^{k-1}}\right)}\ge \sqrt{C_{k-1}}\cdot 2^{\frac{1}{2^{k-1}}}\cdot 2^{n\left(1-\frac{1}{2^{k-1}}\right)}+1$ holds for all $n\ge k$. The choice $C_k=\sqrt{C_{k-1}}\cdot 2^{\frac{1}{2^{k-1}}}+\frac{1}{2^{k\left(1-\frac{1}{2^{k-1}}\right)}}$ works.
		
		From $C_1=2$ this gives $C_2=\frac52$, and for $k\ge 3$, if we already know that $C_{k-1}\le \frac52$ then we will get $C_k\le \sqrt{\frac52}\cdot 2^{1/4}+\frac{1}{2^{3\left(1-\frac{1}{2^2}\right)}}\approx 2.09<\frac52$.
	\end{proof}
	
	\begin{remark}
		In fact, in the argument above the recursive formula gives $C_k\to 1$ as $k\to \infty$. From this argument one can also deduce that for each fixed $k\ge 1$ and $\delta>0$, $|S|\ge (2+\delta)\cdot 2^{n\left(1-\frac{1}{2^{k-1}}\right)}$ already implies the existence of a $[k,2^k]$-flat for sufficiently large $n$.
	\end{remark}
	
	\subsection{Finding $[k,2^{k-\ell}]$-flats for almost all values $|S|$ when $S$ has a low positive density}
	
	In the following, we will show that $k$-flats of density exactly $\frac{1}{2^{\ell}}$ can be found for a long interval of values $m$ which contains a positive fraction of all values as $n\to \infty$, where this fraction is $1$ for $\ell=1$.
	
	The proof relies on bounds for flats of smaller dimension, containing a given number of points. 
	
	\begin{notation}
		Suppose that a set $S\subseteq \ff_2^n$ is given. If $\{v_1,v_2,...,v_d\}$ is a set of linearly independent vectors in $\ff_2^n$, and $0\le t\le 2^d$ is an integer, let $F_{d,t}(S; v_1,v_2,...,v_d)$ denote the number of $\langle v_1,v_2,...,v_d\rangle$-cosets in $\ff_2^n$ containing exactly $t$ elements of $S$.\\
		The number of all $[d,t]$-flats induced by $S$ will be denoted by $F_{d,t}(S)$.\\ From now on, we apply the simpler notation 
		$F_{d,t}(v_1,v_2,...,v_d)$ and $F_{d,t}$, respectively, if the set $S$ is clear from the context.
	\end{notation}
	
	We recall the statement of Theorem \ref{halfflat}.
	
	\textbf{Theorem \ref{halfflat}.} \textit{
		Let $k\ge 2$. Then there exists a constant $C>0$ depending on $k$ such that $[n,m]~\rightarrow~ \left[k, 2^{k-1}\right]$ for $m\in   \left[C\cdot2^{n\left(1-\frac{1}{2^{k-2}}\right)}, 2^n-C\cdot 2^{n\left(1-\frac{1}{2^{k-2}}\right)}\right]$.}
	
	\begin{proof}
		Note that we can assume $n$ to be sufficiently large when needed (by a suitable choice of $C$).
		
		Let $|S|=m$. The number of ordered pairs $(s,s^{*})$ such that $s\in S$ and $s^{*}\in \overline{S}$ is $m(2^n-m)$. For such pairs, the difference $s-s^{*}$ can take on at most $2^n-1$ possible values, and so there exists a difference $d$ that appears in at least $\frac{m(2^n-m)}{2^n-1}\ge \frac{m(2^n-m)}{2^n}$ ways as $s-s^{*}$. Let $d$ be such a difference, and consider the set $S_1\subseteq \ff_2^n/\langle d\rangle$ consisting of those $\langle d\rangle$-cosets that contain exactly one element of $S$. Then we have $|S_1|\ge \frac{m(2^n-m)}{2^n}$.
		
		If we can find a $(k-1)$-dimensional affine subspace of $\ff_2^n/\langle d\rangle$ that consists only of elements in $S_1$, then the lifting of this subspace to $\ff_2^n$ gives rise to a $[k,2^{k-1}]$-flat induced by $S$, which is what we need.
		
		By Theorem \ref{order2}, we have $[N, M]\to [K,2^K]$ for all integers $N\ge K\ge 1$ and $M\ge \frac52\cdot 2^{N\left(1-\frac{1}{2^{K-1}}\right)}$.
		
		Using this fact for $N=n-1$ and $K=k-1$, what remains to prove is that $|S_1|\ge \frac52\cdot 2^{(n-1)\left(1-\frac{1}{2^{k-2}}\right)}$.
		
		Letting $d=\left|m-\frac12\cdot 2^n\right|$, we have $m(2^n-m)=\left(\frac12\cdot 2^n-d\right)\left(\frac12\cdot 2^n+d\right)=\frac14\cdot 2^{2n}-d^2$. So it suffices to show that
		
		$$\frac{\frac14\cdot 2^{2n}-d^2}{2^n}\ge \frac52\cdot 2^{(n-1)\left(1-\frac{1}{2^{k-2}}\right)}$$
		
		$$\Leftrightarrow d^2\le \frac14\cdot 2^{2n}-c\cdot 2^{n\left(2-\frac{1}{2^{k-2}}\right)}$$
		
		for a positive constant $c\in \left(\frac54, \frac52\right]$ depending on $k$.
		
		As we have $d\le \frac12\cdot 2^n-C\cdot 2^{n\left(1-\frac{1}{2^{k-2}}\right)}$, this means that $$d^2\le \frac14\cdot 2^{2n}-C\cdot 2^{n\left(2-\frac{1}{2^{k-2}}\right)}+C^2\cdot 2^{n\left(2-\frac{1}{2^{k-3}}\right)}.$$ For an appropriate choice of $C$ and sufficiently large $n$, this expression is less than $$\frac14\cdot 2^{2n}-c\cdot 2^{n\left(2-\frac{1}{2^{k-2}}\right)}.$$
	\end{proof}
	
	The following theorem is a more specific form of Theorem \ref{2hatvany}:
	
	\begin{theorem}\label{induced_power2_flat}
		For every pair $(k,\ell)$ of integers such that $k>\ell\ge 1$, there exist constants $C>0$ and $0<D<1$ depending on $k,\ell$ such that if we are given a set $S\subseteq \ff_2^n$ of size between $C~\cdot~ 2^{n\left(1-\frac{1}{2^{k-\ell-1}}\right)}$ and $D\cdot 2^n$, then this set will contain an induced $\left[k, 2^{k-\ell}\right]$-flat. Moreover for each $\eps>0$, $D$ can take the value $\frac{1-\eps}{2^{\ell-1}}$ (with appropriate $C$).
	\end{theorem}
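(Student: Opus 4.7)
The plan is to reduce Theorem~\ref{induced_power2_flat} to an application of Theorem~\ref{order2} by constructing an $\ell$-dimensional subspace $V\le\ff_2^n$ whose family of \textit{singleton} cosets (those meeting $S$ in exactly one point) forms a large set $T\subseteq\ff_2^n/V\simeq\ff_2^{n-\ell}$. Once $|T|\ge\tfrac{5}{2}\cdot 2^{(n-\ell)(1-1/2^{k-\ell-1})}$, Theorem~\ref{order2} applied in the quotient yields a full $(k-\ell)$-flat $F\subseteq T$, and its preimage $\tilde F=F+V$ is a $k$-flat of $\ff_2^n$ with $|\tilde F\cap S|=\sum_{C\in F}|C\cap S|=2^{k-\ell}$, as each coset in $F$ contributes exactly one point of $S$.

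To build such a $V$, I will construct a chain $\{0\}=V_0<V_1<\dots<V_\ell$ together with the shrinking sets
\[
S^{(i)}:=\{s\in S\colon s+v\notin S\text{ for all }v\in V_i\setminus\{0\}\},
\]
so that each $s\in S^{(i)}$ is the only element of $S$ in its $V_i$-coset and $|S^{(\ell)}|=|T|$. To pass from $V_i$ to $V_{i+1}=V_i\oplus\langle d_{i+1}\rangle$ I pick $d_{i+1}\in\ff_2^n\setminus V_i$ by averaging: for fixed $s\in S^{(i)}$, as $d_{i+1}$ varies, the translated coset $(s+d_{i+1})+V_i$ is equidistributed over the $V_i$-cosets distinct from $s+V_i$, so the probability that it is empty equals $b_i/(2^{n-i}-1)$, where $b_i$ is the number of empty $V_i$-cosets. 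Using the trivial bound $b_i\ge 2^{n-i}-m$ (at most $m$ cosets meet $S$), one can choose $d_{i+1}$ with $|S^{(i+1)}|\ge|S^{(i)}|\cdot(2^{n-i}-m)/(2^{n-i}-1)$. Telescoping and setting $\mu:=m/2^n\le(1-\varepsilon)/2^{\ell-1}$, the reindexing $j=\ell-1-i$ produces
\[
|S^{(\ell)}|\ \ge\ m\cdot Q(\ell,\varepsilon)\,(1-o(1)),\qquad Q(\ell,\varepsilon):=\prod_{j=0}^{\ell-1}\Bigl(1-\tfrac{1-\varepsilon}{2^j}\Bigr)\ge\varepsilon\cdot\phi(1/2)>0,
\]
where $\phi$ is the Euler function of the preliminaries.

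Hence for $m\ge C\cdot 2^{n(1-1/2^{k-\ell-1})}$ with $C=C(k,\ell,\varepsilon)$ chosen so that $C\cdot Q(\ell,\varepsilon)\ge\tfrac{5}{2}\cdot 2^{-\ell(1-1/2^{k-\ell-1})}$, the size $|T|=|S^{(\ell)}|$ crosses the threshold of Theorem~\ref{order2} inside $\ff_2^{n-\ell}$ for $n$ sufficiently large, and the construction above produces the desired $[k,2^{k-\ell}]$-flat. The main difficulty lies in keeping the survival rate $|S^{(i)}|/m$ bounded away from zero throughout the $\ell$ iterations: the product $\prod_i(1-\mu\cdot 2^i)$ collapses in its last factor precisely as $\mu\to 1/2^{\ell-1}$, which is exactly why an $\varepsilon$-slack is forced into the density bound $D=(1-\varepsilon)/2^{\ell-1}$ and why the constant $C$ necessarily deteriorates as $\varepsilon\to 0$.
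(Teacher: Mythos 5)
Your proof is correct and follows essentially the same route as the paper: you iteratively extend a subspace $V_i$ by one direction at a time, choosing $d_{i+1}$ by averaging over $2^{n-i}-1$ candidate directions to keep the count of singleton cosets large (your $|S^{(i)}|$ is exactly the paper's $F_{i,1}(v_1,\dots,v_i)$, and your averaging bound $|S^{(i+1)}|\ge |S^{(i)}|\,b_i/(2^{n-i}-1)$ is the paper's $F_{d+1,1}\ge F_{d,1}F_{d,0}/(2^{n-d}-1)$), and then apply Theorem~\ref{order2} in the quotient $\ff_2^n/V_\ell$; the only cosmetic difference is that you run the uniform averaging argument from the very first step ($V_0=\{0\}$), whereas the paper treats $d=1$ separately by citing Theorem~\ref{halfflat}.
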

	\begin{proof}

		Let us choose directions $v_1, v_2, ..., v_{\ell}$ iteratively such that each direction $v_d$ forms a linearly independent set together with the previous ones, and the following condition holds for each $1\le~d\le~\ell$ (where $m=|S|$):
		\begin{equation}\label{eq:*}
			F_{d,1}(v_1,v_2,...,v_d)\ge \frac{m(2^n-m)(2^{n-1}-m)...(2^{n-d+1}-m)}{2^n\cdot 2^{n-1}\cdot ...\cdot 2^{n-d+1}}. 
		\end{equation}

		First we choose $v_1$. In the proof of Theorem \ref{halfflat}, we have already seen that it can be chosen such that $F_{1,1}(v_1)\ge \frac{m(2^n-m)}{2^n}$, satisfying (\ref{eq:*}) for $d=1$.
		
		Now suppose that $v_1, ..., v_d$ have already been chosen satisfying (\ref{eq:*}), where $1\le d\le \ell-1$. Then letting $L_d=\langle v_1, ..., v_d\rangle$, consider the $d$-flats parallel to $L_d$. There are $2^{n-d}$ such $d$-flats, and since $|S|=m$, at most $m$ of those contain some point of $S$. So the number of $d$-flats parallel to $L_d$ having empty intersection with $S$ is at least $2^{n-d}-m$.
		
		Any pair of flats parallel to $L_d$ forms a $(d+1)$-flat, and such a $(d+1)$-flat has exactly 1 point in $S$ if and only if one of the $d$-flats contained 1 point of $S$ and the other contained 0. So the number of such $[d+1,1]$-flats is $F_{d,1}(v_1,...,v_d)F_{d,0}(v_1,...,v_d)$. There are $2^{n-d}-1$ possible directions for such a flat (that is, the difference of the two flats in $\ff_2^n/L_d$ can have $2^{n-d}-1$ possible values), so there is a direction $v_{d+1}+L_d$ such that the number of $[d+1,1]$-flats parallel to $L_{d+1}=\langle v_1, ..., v_{d+1}\rangle$ is at least
		
		$$\frac{F_{d,1}(v_1,...,v_d)F_{d,0}(v_1,...,v_d)}{2^{n-d}-1}\ge \frac{F_{d,1}(v_1,...,v_d)\cdot (2^{n-d}-m)}{2^{n-d}}$$
		
		$$\ge \frac{m(2^n-m)(2^{n-1}-m)...(2^{n-d+1}-m)(2^{n-d}-m)}{2^n\cdot 2^{n-1}\cdot ...\cdot 2^{n-d+1}\cdot 2^{n-d}},$$
		
		where we used $(\ref{eq:*})$ for $d$ for the last inequality. This proved $(\ref{eq:*})$ for $d+1$.
		
		Now that $v_1, ..., v_{\ell}$ have been chosen, let $L_{\ell}=\langle v_1, ...., v_{\ell}\rangle$. Let $S'\subseteq \ff_2^n/L_{\ell}$ consist of those $L_{\ell}$-cosets which contain exactly 1 point of $S$.
		
		Again let us use Theorem \ref{order2}, which stated that $[N, M]\to [K,2^K]$ for all integers $N\ge K\ge 1$ and $M\ge \frac52\cdot 2^{N\left(1-\frac{1}{2^{K-1}}\right)}$. Using this for $N=n-\ell$ and $K=k-\ell$ in relation to $\ff_2^n/L_{\ell}$ and its subset $S'$, we would like to obtain a full $(k-\ell)$-flat consisting only of points in $S'$. By lifting such a flat to $\ff_2^n$, we get a $k$-flat that has $2^{k-\ell}$ points of $S$, which is just what we need.
		
		To do this, we need to ensure that $|S'|\ge \frac52\cdot 2^{(n-\ell)\left(1-\frac{1}{2^{k-\ell-1}}\right)}=A\cdot 2^{n\left(1-\frac{1}{2^{k-\ell-1}}\right)}$ for some constant $A$ depending on $k$ and $\ell$.
		
		If we have $m\le (1-\eps)2^{n-\ell+1}$ (corresponding to $D=\frac{1-\eps}{2^{\ell-1}}$), then by $(\ref{eq:*})$,
		
		$$|S'|=F_{\ell, 1}(v_1, ..., v_{\ell})\ge \frac{m(2^n-m)(2^{n-1}-m)...(2^{n-\ell+1}-m)}{2^n\cdot 2^{n-1}\cdot ...\cdot 2^{n-\ell+1}}=m\eps \prod_{k=1}^{\ell-1} \left(1-\frac{1-\eps}{2^k}\right)\ge m\eps \phi\left(\frac12\right)$$
		
		where $\phi\left(\frac12\right)$ is the  value of the Euler function at $\frac12$, which is approximately  $0.2888$.
		
		This can be ensured to be at least $A\cdot 2^{n\left(1-\frac{1}{2^{k-\ell-1}}\right)}$ by taking $m\ge \frac{A}{\eps\cdot \phi\left(\frac12\right)}\cdot 2^{n\left(1-\frac{1}{2^{k-\ell-1}}\right)}$.
		
		Therefore, if we have $\frac{A}{\eps\cdot \phi\left(\frac12\right)}\cdot 2^{n\left(1-\frac{1}{2^{k-\ell-1}}\right)}\le m\le \frac{1-\eps}{2^{\ell-1}}\cdot 2^n$, then $[n,m]\to [k,2^{k-\ell}]$.
	\end{proof}
	
	\begin{remark}Note that Theorem \ref{induced_power2_flat} does not hold for $D\ge \frac{1}{2^{\ell-1}}$, as taking $S$ to be a full $(n-\ell+1)$-flat in $\ff_2^n$, it will not induce any $[k, 2^{k-\ell}]$-flats, as for any $k$-flat $\mathcal{F}_k$ with $\mathcal{F}_k\cap S\ne\emptyset$, we must have $\dim \mathcal{F}_k-\dim (\mathcal{F}_k\cap S)\le \ell-1$, meaning that $|\mathcal{F}_k\cap S|\ge 2^{k-\ell+1}$.
	\end{remark}
	
	\section{Finding $[k,3\cdot 2^{k-\ell}]$-flats for almost all values $|S|$ when $S$ has a low positive density}\label{3power2_flats_section}
	
	We continue our investigation for the case when the binary form of $t$ contains exactly two  $1$s which are consecutive. The main focus will be the proof of Theorem \ref{3times2power}, which we will recall:
	
	\textbf{Theorem \ref{3times2power}.} \textit{
		For every pair $(k,\ell)$ of integers with $2\le \ell\le k-1$, the density of integer values $m$ within the interval $ \left[0, \frac{1}{2^{\ell-1}}\cdot 2^n\right]$ for which  $[n,m]\to [k,3\cdot 2^{k-\ell}]$ holds, tends to $1$ as $n\to \infty$. Hence $\rho(n; k, 3\cdot 2^{k-\ell})\geq \frac{1}{2^{\ell-1}}.$
	}
	
	The proof builds on ideas similar to those used for Theorem \ref{induced_power2_flat}, this time bounding the number of $[2,3]$-flats in $S$ based on its {additive energy}, which is an additive combinatorial notion that we will now define.
	
	For a set $S\subseteq \ff_2^n$, let  the \textit{additive energy} of $S$ (a notion introduced by Tao and Vu \cite[Section 2.3]{TaoVu}), denoted by $E(S)$ be as follows:
	
	$$E(S)=\{(u_1,u_2,u_3,u_4)\in S^4: u_1+u_2=u_3+u_4\}.$$
	
	If $|S|=m$ then $E(S)\le m^3$ since to each triple $(u_1,u_2,u_3)$, at most one value $u_4$ can belong, and there is equality if and only if $S$ is a flat of $\ff_2^n$ (which is easy to verify using characteristic $2$). However, this can only happen if $m$ is a power of $2$.
	
	In fact, the bound $E(S)\le m^3$ can be improved in general using the concept below.
	
	\begin{definition}
		For $d\in \zz^{+}$ and $a\in \zz$, define the \textit{least absolute residue} of $a$ modulo $d$, denoted $r_a(d)$, by
		
		$$r_a(d)=\min_{k\in \zz} |a-kd|.$$
	\end{definition}
	
	\begin{proposition}\label{energybound}
		Fix real numbers $\frac12\le\alpha<1$ and $\eps>0$ with $\alpha+\eps<1$. Then there exists $m_0\in \zz^{+}$ such that for any pair of integers $n>k\in \zz^{+}$, if $S\subseteq \ff_2^n$ is any set with $$m_0\le |S|=m\in [2^k, 2^{k+1}] \mbox{\  and \ } r_{2^{\lceil {(\alpha+\eps)k \rceil}}}(m)\ge 2^{\alpha k},$$ then $E(S)\le m^3-m^{2+\alpha-\eps}$.
	\end{proposition}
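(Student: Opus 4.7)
The starting point is the identity
\[
m^3 - E(S) \;=\; \sum_{v \ne 0} r(v)(m - r(v)) \;=\; 6\,F_{2,3}(S),
\]
where $r(v) = |\{(u_1, u_2) \in S^2 : u_1 + u_2 = v\}|$ and $F_{2,3}(S)$ counts the $[2,3]$-flats induced by $S$; this reduces the target inequality to the combinatorial statement $F_{2,3}(S) \ge m^{2+\alpha-\eps}/6$, which is exactly the content needed for Theorem \ref{3times2power}. Plancherel on $\ff_2^n$ gives the dual expression
\[
m^3 - E(S) \;=\; \frac{1}{2^n}\sum_{\xi \ne 0} \widehat{1_S}(\xi)^2\bigl(m^2 - \widehat{1_S}(\xi)^2\bigr),
\]
subject to the Parseval constraint $\sum_{\xi \ne 0} \widehat{1_S}(\xi)^2 = m(2^n - m)$. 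Writing $a_\xi := \widehat{1_S}(\xi)^2 \in [0, m^2]$, the goal is to show $\sum_{\xi \ne 0} a_\xi(m^2 - a_\xi) \ge 2^n m^{2+\alpha-\eps}$.

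I would proceed by contradiction: assume this sum is strictly smaller, and isolate the \emph{large spectrum} $\mathcal{L} := \{\xi \ne 0 : a_\xi \ge (1-\delta)m^2\}$ for a carefully chosen small $\delta$ (of order $m^{\alpha-1-\eps/2}$). Any $\xi$ of intermediate spectral mass $a_\xi \in (\delta m^2, (1-\delta) m^2)$ contributes at least $\delta(1-\delta) m^4$ to the sum, so their number is at most $O(2^n m^{\alpha-\eps}/(\delta m^2)) = o(2^n/m)$; combined with Parseval, this pins $|\mathcal{L}|$ near $2^n/m$ and forces almost all of the Fourier mass to sit on $\mathcal{L}$. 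The rigid structural fact I would then exploit is that $a_\xi = m^2$ holds exactly when $\xi \in (S-S)^{\perp} = V^{\perp}$, where $V := \{v : S+v = S\}$; in that exact case $S$ is a union of $V$-cosets and $|V| = 2^d$ divides $m$. A stability step—of Chang or polynomial Freiman--Ruzsa flavour—should upgrade $|\mathcal{L}| \approx 2^n/m$ to the statement that $\mathcal{L}$ is essentially $V^{\perp}$, forcing $|V| = 2^d$ to divide $m$ and to be close to $m$ with an error controlled by the Fourier defect.

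The final step converts this structural information into a contradiction with $r_{2^j}(m) \ge 2^{\alpha k}$. Since $j = \lceil(\alpha+\eps)k\rceil \le k$, the hypothesis forbids $2^j \mid m$, so $d \le j-1$ and $2^d \le 2^{j-1}$; writing $m = q \cdot 2^d$ with $q \ge 2$ (the case $q=1$, meaning $m = 2^d$, is already excluded by the hypothesis), the stability bound delivers $(q-1)\cdot 2^d \lesssim m^{1+\alpha-\eps}$, and a short 2-adic calculation comparing $m \bmod 2^j$ with $2^{\alpha k}$ then contradicts $r_{2^j}(m) \ge 2^{\alpha k}$. The main obstacle is precisely the structural step: while the rigid identity $\{\xi : a_\xi = m^2\} = V^{\perp}$ is easy, quantifying its "approximate" version with the precision $m^{1+\alpha-\eps}$ is delicate. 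An alternative, more elementary route in the spirit of this paper's other proofs would iterate Lemma \ref{char} for roughly $\lceil(\alpha+\eps)k\rceil$ levels of hyperplane slicing, tracking how both the additive energy and the residue $m \bmod 2^j$ propagate; keeping the error bounds under control across that many iterations is where most of the work lies.
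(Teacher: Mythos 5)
Your opening reduction is correct and matches the paper: $m^3 - E(S) = \sum_{v\neq 0} r(v)(m - r(v)) = 6F_{2,3}(S)$, and the Fourier dual you write down is also valid. The heuristic that follows (pinning the "large spectrum" to size roughly $2^n/m$) is sound. But the proposal has a genuine gap exactly where you flag it: neither the Chang/PFR-flavoured stability step nor the iterated application of Lemma \ref{char} is actually carried out, and both would be hard to complete at the precision required. A stability statement converting "$E(S)\ge m^3 - m^{2+\alpha-\eps}$" into "$S$ is within $O(m^{1+\alpha-\eps})$ of a coset of a subspace" is a quantitatively delicate $99\%$-structure result; Chang's lemma controls the size and dimension of the large spectrum but not its exact alignment with a fixed annihilator $(S-S)^\perp$, and PFR-type theorems are far heavier machinery than the paper uses and would need to be engineered to match the exponent $\alpha-\eps$. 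The iterated-Charbit alternative faces the difficulty you anticipate: after $\lceil(\alpha+\eps)k\rceil$ hyperplane slices the $\sqrt m$-error terms compound, and there is no clean propagation of the residue $m \bmod 2^j$ through the slicing.

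The paper avoids all of this by staying entirely in physical space with the popularities $p_v$ and invoking a completely different key tool: an edge-isoperimetric inequality for the hypercube (Hart's theorem, Corollary~\ref{Hart2}, sharpened via the Takagi-function identity in Proposition~\ref{cutcrossing_edges} and its multi-copy version Proposition~\ref{cutcrossing_edges_2}). Proposition~\ref{manyrichdiff} then says: if there are at least $2^b$ directions $v$ with $p_v\ge c$, take $b+1$ linearly independent ones, look at the induced copies of $Q_{b+1}$ in the cosets of their span, and observe that the condition $r_{2^{b+1}}(m)\ge 2^d$ forces one side of the resulting cut to have size at least $2^d$, hence at least $2^d(b+1-d)$ crossing edges. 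Since each crossing edge is a pair $\{w,w+v_i\}$ with exactly one endpoint in $S$, the total is at most $(b+1)(m-2c)$, and with $c$ close enough to $m/2$ this is a contradiction. That gives an upper bound on the number of very rich differences, which plugs into the elementary convexity Lemma~\ref{sumofsquaresbound} to control $\sum_v p_v^2$ directly — no approximate structure theorem is needed. In short: the residue hypothesis enters not through a structural/stability argument about $S$, but through a counting argument about the \emph{span of the rich differences}, mediated by hypercube edge-boundary bounds. That is the idea missing from your sketch.
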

	
	Proving this bound requires estimates on sizes of cuts in hypercube graphs (and disjoint unions thereof), which we will defer to Section \ref{hypercube_cuts_section}, and the bound itself will be proven in Section \ref{energy_bound_section}. For now, let us apply this to prove our theorem.
	
	\subsection{Proof of Theorem \ref{3times2power}}
	
	The following lemma gives an alternative expression of the additive energy which will be helpful.
	
	\begin{lemma}
		For a nonzero vector $v\in \ff_2^n$, let us denote by $p_v$ the number of pairs in $S$ with difference $v$. Then we have 
		
		$$E(S)=|S|^2+4\sum_{v\in \ff_2^n\setminus \{0\}} p_{v}^2.$$
	\end{lemma}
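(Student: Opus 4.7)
The plan is to rewrite the additive energy through the representation function and then separate the contribution of the zero difference from the nonzero ones.

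First I would introduce, for each $v\in \ff_2^n$, the representation count
$$r(v):=|\{(a,b)\in S\times S: a+b=v\}|.$$
Then the defining condition $u_1+u_2=u_3+u_4$ partitions the quadruples in $E(S)$ according to the common value $v$ of these two sums, yielding the standard identity
$$E(S)=\sum_{v\in \ff_2^n} r(v)^2.$$
This is the only conceptual step; everything else is bookkeeping.

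Next I would evaluate $r(v)$ in the two relevant cases. For $v=0$, the equation $a+b=0$ in characteristic $2$ is equivalent to $a=b$, so the ordered pairs $(a,b)\in S\times S$ with $a+b=0$ are exactly the diagonal pairs $(a,a)$ for $a\in S$, giving $r(0)=|S|=m$. For $v\neq 0$, again using characteristic $2$ we have $a+b=v\iff a-b=v$, so each unordered pair $\{a,b\}\subseteq S$ of distinct elements with difference $v$ contributes exactly the two ordered pairs $(a,b)$ and $(b,a)$. Thus $r(v)=2p_v$ for every nonzero $v$.

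Substituting into the identity for $E(S)$ gives
$$E(S)=r(0)^2+\sum_{v\in \ff_2^n\setminus\{0\}} r(v)^2 = m^2 + \sum_{v\in \ff_2^n\setminus\{0\}} (2p_v)^2 = |S|^2 + 4\sum_{v\in \ff_2^n\setminus\{0\}} p_v^2,$$
which is the claimed formula. The only mild obstacle is being careful about the ordered/unordered bookkeeping and about the diagonal $a=b$ case, but both are trivialized by working in characteristic $2$; no other subtlety is involved.
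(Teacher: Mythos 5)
Your proof is correct and is essentially the same argument as the paper's: both partition the energy quadruples by the common value $v=u_1+u_2=u_3+u_4$, observe that $v=0$ forces $u_1=u_2$ and $u_3=u_4$ (giving $m^2$), and account for the factor of $4$ by ordering the two unordered pairs contributing to each nonzero $v$. Introducing the representation function $r(v)$ and writing $E(S)=\sum_v r(v)^2$ is just a cleaner way to organize the same counting.
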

	
	\begin{proof}
		The first term comes from quadruples of the form $(u_1,u_1,u_3,u_3)$ for arbitrary $u_1,u_3\in S$. For the second term we can choose any pairs $\{u_1,u_2\}$ and $\{u_3,u_4\}$ with sum $u_1+u_2=u_3+u_4=v$ and then order each pair in one of two ways.
	\end{proof}
	
	Now we can relate the additive energy to the number of induced $[2,3]$-flats:
	
	\begin{lemma}\label{23flats_energy}
		Let $S\subseteq \ff_2^n$ be a set with $|S|=m$. Then the number of $2$-flats of $\ff_2^n$ containing exactly 3 points of $S$ (denoted $F_{2,3}$) satisfies $F_{2,3}=\frac16(m^3-E(S))$.
	\end{lemma}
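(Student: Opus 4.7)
The plan is to count $F_{2,3}$ by characterizing $[2,3]$-flats combinatorially and then comparing the count with an explicit computation of $E(S)$.

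First I would observe that, since $\mathrm{char}(\ff_2) = 2$, any $2$-flat $\{x, x+a, x+b, x+a+b\} \subseteq \ff_2^n$ has coordinate sum $0$, and conversely any four distinct vectors $u_1,u_2,u_3,u_4$ with $u_1+u_2+u_3+u_4 = 0$ form a $2$-flat (the three differences $u_2-u_1, u_3-u_1, u_2+u_3-2u_1 = u_4-u_1$ are pairwise distinct and nonzero, hence linearly independent over $\ff_2$). In particular, any three distinct points $u_1,u_2,u_3$ determine a unique $2$-flat, whose fourth point is $u_1+u_2+u_3$. Consequently, a $[2,3]$-flat corresponds bijectively to an unordered triple $\{u_1,u_2,u_3\} \subseteq S$ with $u_1+u_2+u_3 \notin S$. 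Splitting $\binom{m}{3}$ according to whether or not $u_1+u_2+u_3 \in S$, and noting that each $[2,4]$-flat contributes exactly $4$ triples to the first class, I get the identity
\begin{equation}\label{eq:split_triples}
F_{2,3} \;=\; \binom{m}{3} - 4\,F_{2,4}.
\end{equation}

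Next I would compute $E(S)$ by classifying ordered quadruples $(u_1,u_2,u_3,u_4) \in S^4$ with $u_1+u_2+u_3+u_4=0$ according to their multiset type. The key characteristic-$2$ phenomenon is that if two of the $u_i$'s coincide, then the other two must coincide as well, since cancellation forces the remaining pair to sum to $0$. Therefore the only possible multiset types are: all four equal (contributing $m$ ordered tuples); two distinct values each appearing twice (contributing $\binom{4}{2}\binom{m}{2} = 3m(m-1)$); and four distinct values, which correspond to $[2,4]$-flats and contribute $4!\,F_{2,4} = 24\,F_{2,4}$. Summing gives
\begin{equation}\label{eq:energy_expansion}
E(S) \;=\; m + 3m(m-1) + 24\,F_{2,4} \;=\; 3m^2 - 2m + 24\,F_{2,4}.
\end{equation}

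Finally I would substitute \eqref{eq:energy_expansion} into \eqref{eq:split_triples}:
\[
F_{2,3} \;=\; \frac{m(m-1)(m-2)}{6} - \frac{E(S) - 3m^2 + 2m}{6} \;=\; \frac{m^3 - E(S)}{6},
\]
which is the claimed formula. The only genuinely delicate point is the case analysis for $E(S)$, and specifically noticing that partial coincidences among the $u_i$ are ruled out in characteristic $2$; the rest is bookkeeping.
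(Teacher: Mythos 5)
Your proof is correct in its overall structure and conclusion, but it takes a genuinely different route from the paper. The paper's proof works through the quantities $p_v$ (number of pairs of $S$ with difference $v$): it uses the preceding lemma $E(S) = m^2 + 4\sum_v p_v^2$ together with the identity $3F_{2,3} = \sum_v F_{1,2}(v)F_{1,1}(v) = \sum_v p_v(m - 2p_v)$, obtained by decomposing each $[2,3]$-flat into a parallel $[1,2]$-flat and $[1,1]$-flat, and then combines these algebraically. You instead express both quantities in terms of $F_{2,4}$: (i) any three distinct points of $\ff_2^n$ lie in a unique $2$-flat, giving $F_{2,3} = \binom{m}{3} - 4F_{2,4}$; (ii) an ordered-quadruple classification gives $E(S) = 3m^2 - 2m + 24F_{2,4}$; eliminating $F_{2,4}$ yields the formula. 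Your approach is more self-contained and avoids the $p_v$ machinery altogether, whereas the paper's approach reuses the $p_v$ lemma it had already set up. Both yield the same identity with about the same amount of bookkeeping.

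One slip to fix in your parenthetical justification: you claim that the three differences $u_2-u_1,\ u_3-u_1,\ u_4-u_1$ are "pairwise distinct and nonzero, hence linearly independent over $\ff_2$." That implication is false in general, and here it cannot hold: these three differences sum to $u_1+u_2+u_3+u_4 = 0$ (in characteristic $2$), so they are necessarily linearly \emph{dependent}, and were they independent the four points would span a $3$-flat, not a $2$-flat. What you want is that $u_2-u_1$ and $u_3-u_1$ are distinct and nonzero, hence linearly independent over $\ff_2$, and $u_4-u_1 = (u_2-u_1)+(u_3-u_1)$ lies in their span, so $\{u_1,u_2,u_3,u_4\}$ equals the $2$-flat $u_1 + \langle u_2-u_1,\ u_3-u_1\rangle$. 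This does not affect anything downstream, since the only claim you actually use is that three distinct points determine a unique $2$-flat with fourth point $u_1+u_2+u_3$, which is correct.
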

	\begin{proof}
		Taking a nonzero vector $v\in \ff_2^n$, we clearly have $F_{1,2}(v)=p_{v}$ and $F_{1,1}(v)=m-2p_{v}$. A $[2,3]$-flat is composed of a $[1,2]$-flat and a $[1,1]$-flat which are parallel to each other, however each $[2,3]$-flat can be decomposed in 3 such ways, so $$3F_{2,3}=\sum_{v\ne 0} F_{1,2}({v})F_{1,1}({v})=\sum_{v\ne 0} p_{v}(m-2p_{v})=m\binom{m}{2}-2\sum_{v\ne 0} p_{v}^2,$$ where $\sum_{v\ne 0} p_{v}^2=\frac{E(S)-m^2}{4}$. This means that $3F_{2,3}=\frac{m^2(m-1)}{2}-2\cdot\frac{E(S)-m^2}{4}=\frac12m^3-\frac12E(S)$, giving the statement.
	\end{proof}
	
	\begin{proof}[Proof of Theorem \ref{3times2power}]
		We follow a similar proof method as for Theorem \ref{induced_power2_flat}.
		
		Fix an $\eps_0>0$ and take a set $S\subseteq \ff_2^n$ with $|S|=m\le (1-\eps_0)2^{n-\ell+1}$.
		
		Let us choose directions $v_1, v_2, ..., v_{\ell}$ so that each direction $v_d$ forms a linearly independent set with the previous ones, and the following condition holds for each $d\in \{2, 3, \ldots, \ell\}$:
		
		$$F_{d,3}(v_1,v_2,...,v_d)\ge \frac{F_{2,3}}{\twobinom{n}{2}}\cdot \frac{(2^{n-2}-m)(2^{n-3}-m)...(2^{n-d+1}-m)}{2^{n-2}\cdot 2^{n-3}\cdot ...\cdot 2^{n-d+1}}.$$
		
		This can be done in the same way as in Theorem \ref{induced_power2_flat}, except for the following first step: for $d=2$, choose $v_1$ and $v_2$ together such that for $L_2=\langle v_1,v_2\rangle$, the number of $[2,3]$-flats parallel to $L_2$ is at least ${F_{2,3}}{\Large/}{\footnotesize\twobinom{n}{2}}$. (This can be done, as there are $\footnotesize\twobinom{n}{2}$ possible directions of $2$-flats.)
		
		After such directions have been chosen, in order to use Theorem \ref{order2} to get a $[k, 3\cdot 2^{k-\ell}]$-flat, we need the following condition to hold:
		
		$$\frac{F_{2,3}}{\twobinom{n}{2}}\cdot \frac{(2^{n-2}-m)(2^{n-3}-m)...(2^{n-\ell+1}-m)}{2^{n-2}\cdot 2^{n-3}\cdot ...\cdot 2^{n-\ell+1}}\ge \frac52\cdot 2^{(n-\ell)\left(1-\frac{1}{2^{k-\ell-1}}\right)}.$$
		
		Since $m\le (1-\eps_0)2^{n-\ell+1}$, the left hand side is at least $\frac{6F_{2,3}}{2^{2n}}\eps_0\phi\left(\frac12\right)$. Here we used the fact that $\twobinom{n}{2}=\frac{(2^n-1)(2^n-2)}{6}$. Therefore, it suffices to require
		
		$$F_{2,3}\ge \lambda_1\cdot 2^{n\left(3-\frac{1}{2^{k-\ell-1}}\right)}$$
		
		for a fixed positive constant $\lambda_1$ depending on $k$ and $\ell$ only.
		
		Now if $\delta=\frac{1}{2^{k-\ell-1}}$, let $\alpha=1-\frac{\delta}{4}$ and $\eps=\frac{\delta}{8}$. Then apply Proposition \ref{energybound} for these values $\alpha$ and $\eps$, gaining that for sufficiently large $m$ satisfying $r_{2^{\lceil (\alpha+\eps)t\rceil}}(m)\ge 2^{\alpha t}$ where $m\in [2^t, 2^{t+1}]$, we have $E(S)\le m^3-m^{3-\frac38\delta}$.
		
		By Lemma \ref{23flats_energy}, we then have $F_{2,3}=\frac16(m^3-E(S))\ge \frac16m^{3-\frac38\delta}$. For the guaranteed existence of a $[k, 3\cdot 2^{k-\ell}]$-flat by our method, it suffices to have $\frac16m^{3-\frac38\delta}\ge \lambda_1\cdot 2^{(3-\delta)n}$, equivalent to $m\ge \lambda_2\cdot \left(2^n\right)^{\frac{3-\delta}{3-\frac38\delta}}$ for a positive constant $\lambda_2$. Letting $\mu\in \left(\frac{3-\delta}{3-\frac38\delta},1\right)$ be fixed, we can freely assume that $m\ge 2^{\mu n}$ since the density of values $m$ below this value in the range $[0, \frac{1-\eps_0}{2^{\ell-1}}\cdot 2^n]$ tends to 0. Hence for $n$ sufficiently large, we will then have $m\ge \lambda_2\cdot \left(2^n\right)^{\frac{3-\delta}{3-\frac38\delta}}$. 
		
		Then taking an interval $[2^t, 2^{t+1})$, if $m$ lies uniformly in this interval then the probability that $r_{2^{\lceil (\alpha+\eps)t\rceil}}(m)\ge 2^{\alpha t}$ is $\ge 1-\frac{2\cdot \left\lfloor 2^{\alpha t}\right\rfloor +1}{2^{\lceil (\alpha+\eps)t\rceil}}\ge 1-\frac{2\cdot 2^{\alpha t}+1}{2^{(\alpha+\eps)t}}=1-\frac{2}{2^{\eps t}}-\frac{1}{2^{(\alpha+\eps)t}}$. With the assumption that $m\ge 2^{\mu n}$, we have $t\ge \mu n$, hence this lower bound for the probability of correctness of $m$ tends uniformly to $1$ as $n\to \infty$.
		
		So for each $\eps_0>0$, the density of integers $m\in \left[0,\frac{1-\eps_0}{2^{\ell-1}}\cdot 2^n\right]$ such that $[n,m]\to [k,3\cdot 2^{k-\ell}]$ tends to $1$ as $n\to \infty$. Therefore the same holds for $m\in \left[0,\frac{1}{2^{\ell-1}}\cdot 2^n\right]$ as well.
	\end{proof}
	
	\subsection{Bounding sizes of hypercube cuts}\label{hypercube_cuts_section}
	
	\begin{definition}
		The $n$-dimensional hypercube graph $Q_n$ is a graph with vertex set $\ff_2^n$ such that two vertices are connected by an edge if and only if their Hamming distance is 1.
	\end{definition}
	
	\begin{definition}
		Define the function $\Psi: \mathbb{N}^{+}\to \mathbb{N}$ by $\Psi(t)=\sum\limits_{i=0}^{t-1} s_2(i)$. Recall that $s_2$ denotes the binary digit sum function.
	\end{definition}
	
	Observe that for  any positive integer $k$ we have $\Psi(2^k)=k\cdot 2^{k-1}$, as for each $1\le j\le k$, precisely half of the integers between $0$ and $2^k-1$ have a digit $1$ at the $j^{\mathrm{th}}$ position in the binary representation.
	
	\begin{lemma}[Hart, \cite{Hart}]\label{Hart1}
		For a positive integer $t$, let $c(t)$ be the maximum number of edges in a $t$-vertex subgraph of $Q_n$ ranging over all possible values $n\in \mathbb{N}^{+}$. Then for each $t$ we have $c(t)\le \Psi(t)$.
	\end{lemma}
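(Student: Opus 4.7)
The plan is to proceed by induction on the ambient dimension $n$, showing that any $t$-vertex subset $S \subseteq V(Q_n)$ has at most $\Psi(t)$ edges; since this bound is independent of $n$, it yields the claimed bound on $c(t)$. The base case $n=1$ is immediate. For the inductive step, split $S$ by the last coordinate into $S_0 \cup S_1$ and identify both halves with subsets $S_0', S_1' \subseteq V(Q_{n-1})$. The edges of $S$ decompose into edges internal to $S_0'$, edges internal to $S_1'$, and ``cross'' edges joining $v0 \in S_0$ to $v1 \in S_1$; the last count is exactly $|S_0' \cap S_1'|$. Writing $a = |S_0'|$ and $b = |S_1'|$, the induction hypothesis together with $|S_0' \cap S_1'| \le \min(a,b)$ yields
$$e(S) \le \Psi(a) + \Psi(b) + \min(a,b).$$

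The proof then reduces to the purely numerical inequality
$$\Psi(a) + \Psi(b) + \min(a,b) \le \Psi(a+b) \qquad \text{for all } a,b \ge 0, \qquad (\ast)$$
because the split $(a,b)=(|S_0'|,|S_1'|)$ depends on $S$ and may be any pair with $a+b=t$. Assuming $a \le b$ and invoking Kummer's identity $s_2(x+y) = s_2(x) + s_2(y) - 2\,\gamma(x,y)$, where $\gamma(x,y)$ counts the carries in the base-$2$ addition of $x$ and $y$, one rewrites
$$\Psi(a+b) - \Psi(a) - \Psi(b) = \sum_{j=0}^{a-1}\bigl(s_2(b+j) - s_2(j)\bigr) = a\cdot s_2(b) - 2 \sum_{j=0}^{a-1} \gamma(b,j),$$
so that $(\ast)$ becomes the carry inequality $a\,(s_2(b) - 1) \ge 2 \sum_{j=0}^{a-1} \gamma(b,j)$.

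The main obstacle is establishing this carry inequality. The heuristic is that since $j<a\le b$, each $j$ is ``small enough'' relative to $b$ that adding $b$ and $j$ triggers few carries. I would prove it by induction on $b$, partitioning $\{0,1,\ldots,a-1\}$ according to the binary expansion of $b$: the base case $b = 2^m$ with $m \ge \lceil \log_2 a\rceil$ gives $\sum_j \gamma(b,j) = 0$ and $s_2(b) = 1$, so both sides vanish. For general $b$, stripping off the lowest set bit of $b$ and separately analysing those $j$ that produce a carry at that bit versus those that do not reduces the claim to smaller instances of the same inequality. The tightness of $(\ast)$, needed to rule out any slack, is confirmed by the initial segment $\{0,1,\ldots,t-1\} \subseteq V(Q_n)$: each vertex $i$ has exactly $s_2(i)$ neighbours among smaller vertices (one per $1$-digit that can be flipped to $0$), so $e(\{0,\ldots,t-1\}) = \Psi(t)$ and the bound is attained.
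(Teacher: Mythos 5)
The paper does not prove this lemma; it cites Hart \cite{Hart}, and elsewhere uses Hart's Proposition~2.1, which is precisely the recursion $\Psi(t)=\max_{0<m\le t/2}(\Psi(m)+\Psi(t-m)+m)$. Your overall plan — induction on the ambient dimension $n$, splitting by the last coordinate, bounding cross edges by $\min(a,b)$, and reducing to the numerical inequality $\Psi(a)+\Psi(b)+\min(a,b)\le\Psi(a+b)$ — is exactly the shape of Hart's argument, so the architecture is sound.

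However, there is a concrete error in the execution of the key numerical step. The Legendre/Kummer identity in base $2$ reads
\[
s_2(x)+s_2(y)-s_2(x+y)=\gamma(x,y),
\]
that is, the digit-sum deficit equals $(p-1)$ times the number of carries, and for $p=2$ the factor is $1$, not $2$. Your formula $s_2(x+y)=s_2(x)+s_2(y)-2\gamma(x,y)$ overcounts by a factor of $2$, and the resulting ``carry inequality'' $a\,(s_2(b)-1)\ge 2\sum_{j=0}^{a-1}\gamma(b,j)$ is actually \emph{false}: for $a=2$, $b=3$ the left side is $2$ and the right side is $2\cdot(\gamma(3,0)+\gamma(3,1))=2\cdot(0+2)=4$. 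The correct target (assuming $a\le b$) is
\[
a\,\bigl(s_2(b)-1\bigr)\ \ge\ \sum_{j=0}^{a-1}\gamma(b,j),
\]
which does hold with equality in the example above (and in the other tight cases such as $a=b=2^k-1$). Beyond the arithmetic slip, the argument for the carry inequality is only a heuristic sketch; ``stripping off the lowest set bit of $b$ and separately analysing the $j$ that produce a carry at that bit'' does not yet yield a reduction to a smaller instance of the \emph{same} inequality, because removing the lowest bit of $b$ also changes $s_2(b)$ and the set of $j$ for which $j\le b$. Since this carry inequality is precisely the content of $(\ast)$, and $(\ast)$ is the whole substance of the lemma once the inductive reduction is set up, the proof as written has a genuine gap in its central step. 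The inductive reduction itself and the observation that $\mathcal{L}_t=\{0,\dots,t-1\}$ attains $\Psi(t)$ edges are both correct.
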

	See \cite{Hart} for a proof and a description of the subgraphs attaining the upper bound for $c(t)$. Notably, one such optimal subgraph is induced by the lexicographic construction $\mathcal{L}_t$, introduced in Definition \ref{lexico}.  For any $T\subseteq V(Q_n)$  we have the simple relation $$2e(T)+e(T,\overline{T})=n|T|$$ by double counting all incident edges to each point of $T$ \cite{Hart}. This yields the corollary below.
	
	\begin{corollary}[Hart, Theorem 1.5, \cite{Hart}]\label{Hart2}
		Let $n\ge 1$ and $1\le t\le 2^n-1$ be fixed integers. Then the minimum possible value of $e(T, \overline{T})$ for a subset $T\subseteq V(Q_n)$ of size $t$ is $nt-2\Psi(t)$.
	\end{corollary}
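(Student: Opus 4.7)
The plan is to derive the claim directly from the double counting identity $2e(T)+e(T,\overline{T})=n|T|$ highlighted in the excerpt, combined with Lemma \ref{Hart1}. The first step is to justify the identity: every vertex of $Q_n$ has degree exactly $n$, so summing degrees over $T$ gives $n|T|=nt$, and this sum counts each edge inside $T$ twice and each edge from $T$ to $\overline{T}$ once. Rearranging yields $e(T,\overline{T})=nt-2e(T)$, so minimizing $e(T,\overline{T})$ over all $T\subseteq V(Q_n)$ of size $t$ is equivalent to maximizing $e(T)$.

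Next, I would apply Lemma \ref{Hart1}: for every such $T$ we have $e(T)\le c(t)\le \Psi(t)$, hence $e(T,\overline{T})\ge nt-2\Psi(t)$. This establishes the lower bound; the only remaining task is to exhibit a set $T\subseteq V(Q_n)$ of size $t$ attaining $e(T)=\Psi(t)$.

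For achievability, I would take $T$ to be the lexicographic construction $\mathcal{L}_t$ of Definition \ref{lexico}, viewed as a subset of $V(Q_n)=\ff_2^n$. Since $t\le 2^n-1$ ensures $\lceil \log_2 t\rceil\le n$, the set $\mathcal{L}_t$ has all its nonzero coordinates within the first $\lceil \log_2 t\rceil$ positions, and hence the edges of $Q_n$ internal to $\mathcal{L}_t$ coincide with the edges of the smaller hypercube $Q_{\lceil \log_2 t\rceil}$ on those points. The description of the extremizers for $c(t)$ in \cite{Hart} gives $e(\mathcal{L}_t)=\Psi(t)$, so by the identity $e(\mathcal{L}_t,\overline{\mathcal{L}_t})=nt-2\Psi(t)$, completing the proof.

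There is no real obstacle here beyond ensuring that the lexicographic extremizer in the natural small hypercube remains extremal after embedding into a larger one, which is transparent because internal edges are preserved under coordinate padding and the upper bound $nt-2\Psi(t)$ is dimension-free.
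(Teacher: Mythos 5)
Your proposal is correct and follows essentially the same route as the paper: combine the degree double-counting identity $2e(T)+e(T,\overline{T})=n|T|$ with the bound $e(T)\le c(t)\le \Psi(t)$ from Lemma~\ref{Hart1} for the lower bound, and cite the lexicographic construction $\mathcal{L}_t$ as the extremizer for achievability. The paper states exactly this identity and notes that $\mathcal{L}_t$ attains $c(t)$, so your embedding observation (padding by zero coordinates preserves internal edges) is the right way to make the achievability step precise.
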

	
	From this, we derive the following consequence:
	
	\begin{proposition}\label{cutcrossing_edges}
		Let $n\ge 1$ and $0\le d\le n-1$ be integers. Take any cut  $T\cup \overline{T}=\ff_2^n$ of the $n$-dimensional hypercube graph $Q_n$ that has at least $2^d$ points in both parts. Then at least $2^d(n-d)$ edges of $Q_n$ cross the cut, i.e., $e(T, \overline{T})\ge 2^d(n-d)$. 
	\end{proposition}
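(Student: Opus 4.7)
The plan is to combine the edge-isoperimetric inequality from Corollary \ref{Hart2} with a direct analysis of the function $\Psi$. After replacing $T$ with $\overline{T}$ if necessary, I may assume $|T| = t \in [2^d, 2^{n-1}]$, so Corollary \ref{Hart2} reduces the claim to the purely numerical statement $f(t) := nt - 2\Psi(t) \ge 2^d(n-d)$ for every $t \in [2^d, 2^{n-1}]$.

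Next I write $t = 2^j + t'$ with $j := \lfloor \log_2 t \rfloor \in [d, n-1]$ and $0 \le t' < 2^j$. Since each integer in $[2^j, 2^j + t')$ has binary expansion equal to a leading $1$ followed by the expansion of an integer in $[0, t')$, one obtains the recursion $\Psi(2^j + t') = j \cdot 2^{j-1} + \Psi(t') + t'$, and rearranging gives
\[
f(t) = 2^j(n-j) + (n-2)t' - 2\Psi(t').
\]

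The heart of the argument is the auxiliary inequality $\Psi(s) \le (j/2)\, s$ for every $s \in [0, 2^j]$, which I prove by induction on $j$. In the inductive step, writing $s = 2^{j-1} + s'$ with $0 \le s' \le 2^{j-1}$ and applying the same recursion together with the induction hypothesis $\Psi(s') \le ((j-1)/2)\, s'$, the target bound reduces to the trivial estimate $s'/2 \le 2^{j-2}$. Taking $s = t'$ yields $2\Psi(t') \le j t' \le (n-2)t'$ whenever $j \le n-2$, so $f(t) \ge 2^j(n-j)$; in the only remaining case $j = n-1$ we must have $t = 2^{n-1}$, $t' = 0$, and a direct computation gives $f(2^{n-1}) = 2^{n-1}$.

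It remains to check that $g(j) := 2^j(n-j)$ is at least $g(d)$ throughout $j \in [d, n-1]$. Since $g(j+1) - g(j) = 2^j(n-j-2)$, the sequence is non-decreasing on $[0, n-2]$, and $g(n-2) = g(n-1) = 2^{n-1}$, so $g(j) \ge g(d)$ holds on the full range. Chaining the bounds gives $e(T, \overline{T}) \ge f(t) \ge g(j) \ge g(d) = 2^d(n-d)$, as required. The main obstacle is the digit-sum lemma $\Psi(s) \le (j/2)\, s$; it encapsulates the fact that the average binary digit sum on the dyadic interval $[0, 2^j)$ equals $j/2$, and the technical work is just verifying that this bound is not exceeded on initial segments.
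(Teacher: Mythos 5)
Your proof is correct, and it takes the same overall route as the paper — reduce via Corollary~\ref{Hart2} to the purely arithmetic inequality $nt-2\Psi(t)\ge 2^d(n-d)$ for $t\in[2^d,2^{n-1}]$ — but the way you dispatch that inequality is genuinely different. The paper invokes the Takagi-function identity $2\Psi(t)=td+2^d(2x-\tau(x))$ from Monroe and reduces the claim to $\tau(x)\ge 0$, after a somewhat delicate reduction to the range $2^d\le t<2^{d+1}$. You instead peel off the leading dyadic block, $t=2^j+t'$ with $j=\lfloor\log_2 t\rfloor$, use the recursion $\Psi(2^j+t')=j\cdot 2^{j-1}+t'+\Psi(t')$ to rewrite $f(t)=2^j(n-j)+(n-2)t'-2\Psi(t')$, and then prove the self-contained bound $\Psi(s)\le\tfrac{j}{2}s$ for $s\in[0,2^j]$ by induction on $j$. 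This buys you an argument that is entirely elementary, avoids the external Takagi reference, and handles the range $t\in[2^d,2^{n-1}]$ uniformly rather than via the paper's monotonicity reduction. One small gap to flag: in the inductive step you only treat $s=2^{j-1}+s'$ with $s'\ge 0$, i.e.\ $s\ge 2^{j-1}$; you should also note that for $0\le s<2^{j-1}$ the induction hypothesis $\Psi(s)\le\tfrac{j-1}{2}s$ immediately gives the (weaker) target $\Psi(s)\le\tfrac{j}{2}s$. With that one-line addition the argument is complete.
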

	\begin{proof}
		Suppose that $t=|T|$ and $2^d\le t\le 2^{n-1}$. Using Corollary \ref{Hart2}, we only need to show that $nt-2\Psi(t)\ge 2^d(n-d)$ holds.
		
		It is elementary to see that for integers $0\le k\le n-1$, the function $2^k(n-k)$ is increasing. This implies that in the case $t=2^{n-1}$, the statement holds as $nt-2\sum_{i=0}^{t-1} s_2(i)=n\cdot 2^{n-1}-2(n-1)2^{n-2}=2^{n-1}=2^{n-1}(n-(n-1))\ge 2^d(n-d)$.\\ So from now on, assume that $2^d\le t<2^{n-1}$. The monotonicity implies that
		we can assume  $2^d\le t<2^{d+1}$, where $d\le n-2$.
		
		The function $\Psi$ can be related to the well-known \textit{Takagi function} $\tau$, which is a continuous but nowhere differentiable non-negative function on $[0,1]$.   Let $t\in \zz^{+}$  be expressed as $t=2^d(1+x)$ for $d\in \mathbb{N}$ and $x\in [0,1)$.  Then  \begin{equation}\label{eq:Taka}
			2\Psi(t)=td+2^d(2x-\tau(x)),		
		\end{equation}
		see Monroe \cite[Corollary 5.0.8 (a)]{Monroe}. For a survey on the Takagi function, see \cite{Lagarias}.
		
		Now $nt-2\Psi(t)\ge 2^d(n-d) \Leftrightarrow 2\Psi(t)\le n(t-2^d)+d\cdot 2^d$. Since $n\ge d+2$, it suffices to prove that $2\Psi(t)\le (d+2)(t-2^d)+d\cdot 2^d$. By (\ref{eq:Taka}), this is equivalent to $$td+2^d(2x-\tau(x))\le (d+2)(t-2^d)+d\cdot 2^d \Leftrightarrow 2^{d+1} x-2^d\cdot \tau(x)\le 2\cdot 2^d x,$$ which holds since $\tau(x)\ge 0$ for all $x$.
	\end{proof}
	
	Let us prove a slightly different version of this proposition, for the case of multiple copies of the hypercube graph.
	
	\begin{proposition}\label{cutcrossing_edges_2}
		Let $n\ge 1$, $k\ge 1$ and $0\le d\le n-2$ be integers. Let $kQ_n$ denote the disjoint union of $k$ copies $Q_n^{(1)}$, $Q_n^{(2)}$, ..., $Q_n^{(k)}$ of the hypercube graph $Q_n$. Take any cut $kQ_n=A\sqcup B$ such that $|A|\ge 2^d$, and for each $1\le i\le k$, $1\le |A\cap Q_n^{(i)}|\le 2^{n-1}$. Then at least $2^d(n-d)$ edges of $kQ_n$ cross the cut.
	\end{proposition}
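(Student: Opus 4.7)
The plan is to reduce to the single-cube case (Proposition \ref{cutcrossing_edges}) by extracting a subfamily of the $Q_n^{(i)}$'s whose joint contribution to $A$ has size in $[2^d, 2^{n-1}]$, and then using the superadditivity of $\Psi$ to consolidate the per-cube Hart-type bounds into a single application of the one-cube estimate.

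Set $t_i := |A\cap Q_n^{(i)}|$ and $f(t) := nt - 2\Psi(t)$, so that by Corollary \ref{Hart2} the number of cut edges inside $Q_n^{(i)}$ is at least $f(t_i)\ge 0$. We have $1\le t_i\le 2^{n-1}$ and $\sum_i t_i\ge 2^d$, and the goal becomes $\sum_i f(t_i)\ge 2^d(n-d)$.

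First I would locate $S\subseteq\{1,\dots,k\}$ with $\sigma := \sum_{i\in S}t_i \in [2^d, 2^{n-1}]$: if some $t_j\ge 2^d$, take $S=\{j\}$; otherwise every $t_i<2^d$ and greedily adding indices yields a partial sum that first exceeds $2^d$ at a value strictly less than $2\cdot 2^d\le 2^{n-1}$, using the hypothesis $d\le n-2$. Next I would invoke the superadditivity of $\Psi$, $\Psi(a+b)\ge\Psi(a)+\Psi(b)$, which follows from Lemma \ref{Hart1} applied to the construction that places edge-maximum $a$- and $b$-vertex subgraphs in two disjoint parallel layers of a sufficiently large hypercube, producing an $(a+b)$-vertex subgraph with $\Psi(a)+\Psi(b)$ edges. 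Iterating this bound over $S$,
\[
\sum_{i\in S} f(t_i) \;=\; n\sigma - 2\sum_{i\in S}\Psi(t_i) \;\ge\; n\sigma - 2\Psi(\sigma) \;=\; f(\sigma).
\]
Finally, since $\sigma\in[2^d, 2^{n-1}]$, viewing this as a cut in a single $Q_n$ gives at least $2^d$ points on each side, so Proposition \ref{cutcrossing_edges} yields $f(\sigma)\ge 2^d(n-d)$. Because $f(t_i)\ge 0$ for indices outside $S$, this completes the argument.

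The main obstacle is the careful selection of $S$: the naive choice $S=\{1,\dots,k\}$ may have $\sigma>2^{n-1}$, outside the range where the one-cube estimate is useful. The dichotomy above circumvents this, and the role of the assumption $d\le n-2$ is precisely to guarantee $2^{d+1}\le 2^{n-1}$ in the greedy case. The superadditivity step is the other essential ingredient, as each individual $f(t_i)$ can be far smaller than $2^d(n-d)$ and only their aggregate comparison with $f(\sigma)$ recovers the sought bound.
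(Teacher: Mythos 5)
Your proof is correct and follows essentially the same path as the paper's: the same dichotomy between a single heavy cube and the greedy accumulation of light ones, the same reliance on Hart's corollary to lower-bound per-cube cut edges by $f(t_i)=nt_i-2\Psi(t_i)$, the same use of superadditivity of $\Psi$ to consolidate into $f(\sigma)$, and the same invocation of (the proof of) Proposition \ref{cutcrossing_edges} on a $\sigma$ landing in $[2^d,2^{n-1}]$. The only cosmetic difference is your justification of $\Psi(a+b)\ge\Psi(a)+\Psi(b)$: you argue via the constructive fact that the bound $c(t)\le\Psi(t)$ of Lemma \ref{Hart1} is attained (which the paper notes in passing), whereas the paper cites Hart's recursion $\Psi(t)=\max_{0<m\le t/2}(\Psi(m)+\Psi(t-m)+m)$ directly; both are valid.
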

	
	\begin{proof}
		By \cite[Proposition 2.1]{Hart}, the function $\Psi$ satisfies the recursion $$\Psi(t)=\max\limits_{0<m\le \frac{t}{2}} (\Psi(m)+\Psi(t-m)+m),$$ hence for any partition $t=t_1+t_2$ with $t_1,t_2\in \zz^{+}$, we have $\Psi(t)\ge \Psi(t_1)+\Psi(t_2)$. By induction, this implies that for any partition $t=t_1+t_2+...+t_k$ (for any $k\ge 1$) with $t_1,t_2,...,t_k\in \zz^{+}$, we have $\Psi(t)\ge \Psi(t_1)+\Psi(t_2)+...+\Psi(t_k)$.
		
		Let $A=\bigsqcup\limits_{i=1}^k A_i$ where $A_i=A\cap Q_n^{(i)}$, and similarly let $B=\bigsqcup \limits_{i=1}^k B_i$.
		
		We consider two cases depending on $\max\limits_{1\le i\le k} |A_i|$:
		
		\textbf{Case 1.} There exists an $i$ such that $|A_i|\ge 2^d$.
		
		In this case, in $Q_n^{(i)}$ we have a cut into $A_i$ and $B_i$ such that $2^d\le |A_i|\le 2^{n-1}$, which, by Proposition \ref{cutcrossing_edges}, intersects at least $2^d(n-d)$ edges on its own.
		
		\textbf{Case 2.} For all values $i$, we have $|A_i|<2^d$.
		
		In this case, each $i$ satisfies $|A_i|<2^d\le 2^{n-2}$. Letting $\sigma_j=\sum_{i=1}^j |A_i|$ for each $1\le j\le k$, we have $\sigma_k=|A|\ge 2^d$. Let $j\in \{1, 2, \ldots, k\}$ be the smallest value such that $\sigma_j\ge 2^d$. Then $\sigma_{j-1}<2^d$, and $\sigma_j=\sigma_{j-1}+|A_j|<2^d+2^d\le 2^{n-1}$. Now adding up the number of edges crossing the cut in the first $j$ hypercubes, we get at least \begin{equation}
			\begin{split}
				n|A_1|-2\Psi(|A_1|)+n|A_2|-2\Psi(|A_2|)+...+n|A_j|-2\Psi(|A_j|)\ge \\ n(|A_1|+|A_2|+...+|A_j|)-2\Psi(|A_1|+|A_2|+...+|A_j|)=n\sigma_j-2\Psi(\sigma_j)
			\end{split}
		\end{equation} by Corollary \ref{Hart2}.
		
		This in turn completes the proof, as $n\sigma_j-2\Psi(\sigma_j)\ge 2^d(n-d)$ by the proof of Proposition \ref{cutcrossing_edges}, since $j$ was chosen so that 
		$2^d\le \sigma_j\le 2^{n-1}$ holds.
	\end{proof}
	
	\subsection{Bounding the additive energy}\label{energy_bound_section}
	
	\begin{proposition}\label{manyrichdiff}
		Given integers $m,b,c,d,n\ge 1$ with $d<b$ and $r_{2^{b+1}}(m)\ge 2^d$, the following holds. For any $S\subseteq \ff_2^n$ with $|S|=m$, if there are at least $2^b$ distinct nonzero differences $v$ for $S$ with $p_{v}\ge c$, then $(b+1)(m-2c)\ge 2^d(b+1-d)$.
	\end{proposition}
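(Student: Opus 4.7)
The strategy is to expose a hypercube-graph structure inside $\ff_2^n$ via the rich differences and then compare a global upper and lower bound on the number of edges crossing the cut $(S, \overline{S})$. First I would extract $b+1$ linearly independent rich differences: a subspace of dimension $b$ contains only $2^b-1 < 2^b$ nonzero vectors, so the $\ff_2$-span of the $\ge 2^b$ rich differences has dimension at least $b+1$, and one can select independent rich directions $v_1, \ldots, v_{b+1}$. Let $V = \langle v_1, \ldots, v_{b+1}\rangle$, and partition $\ff_2^n$ into the $2^{n-b-1}$ cosets of $V$. Keeping only the edges of the form $\{x, x+v_i\}$ yields a disjoint union of $2^{n-b-1}$ copies of $Q_{b+1}$, in which every vertex has degree $b+1$.

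Next comes the straightforward upper bound. Let $E_{\mathrm{cut}}$ denote the number of edges of this graph with exactly one endpoint in $S$. Double-counting the degrees of $S$-vertices gives
\[
E_{\mathrm{cut}} \;=\; m(b+1) - 2\sum_{i=1}^{b+1} p_{v_i} \;\le\; (b+1)(m-2c),
\]
since each $v_i$ is rich, so $p_{v_i} \ge c$.

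The heart of the argument is the matching lower bound. Cut edges can only occur in \emph{mixed} cosets (those meeting both $S$ and $\overline{S}$); within each such coset $C$, let $A_C$ be whichever of $S\cap C$, $\overline{S}\cap C$ is smaller, so $|A_C| \in [1, 2^b]$. The restricted disjoint union $k\,Q_{b+1}$ (where $k$ is the number of mixed cosets) equipped with the cut $(A_C, C\setminus A_C)$ preserves the cut-edge count. Writing $a_C := |S\cap C|$, observe that $|A_C| = r_{2^{b+1}}(a_C)$, and summing over mixed cosets gives $\sum_C a_C \equiv m \pmod{2^{b+1}}$ because the non-mixed cosets contribute multiples of $2^{b+1}$. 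Since the nearest-multiple function $r_{2^{b+1}}$ is easily seen to be subadditive (pick signed representatives of least absolute value and add),
\[
2^d \;\le\; r_{2^{b+1}}(m) \;=\; r_{2^{b+1}}\!\left(\textstyle\sum_C a_C\right) \;\le\; \sum_C r_{2^{b+1}}(a_C) \;=\; |A|.
\]
Proposition \ref{cutcrossing_edges_2} applied with hypercube dimension $b+1$ then yields $E_{\mathrm{cut}} \ge 2^d(b+1-d)$, and comparing with the upper bound produces the claimed inequality.

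The main obstacle is the estimate $|A|\ge 2^d$: within an individual mixed coset the smaller side $A_C$ may be tiny, and it is a priori unclear how these local sizes aggregate. The trick is that the subadditivity of $r_{2^{b+1}}$, together with the congruence $\sum_C a_C \equiv m \pmod{2^{b+1}}$ forced by the coset decomposition, translates the arithmetic hypothesis $r_{2^{b+1}}(m)\ge 2^d$ exactly into the global lower bound needed to trigger Proposition \ref{cutcrossing_edges_2}; the rest of the proof is bookkeeping.
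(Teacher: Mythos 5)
Your proof is correct and follows essentially the same strategy as the paper: extract $b+1$ linearly independent rich differences, build the disjoint hypercube union $\ell\,Q_{b+1}$ on the mixed $L$-cosets, bound the number of cut edges above via the richness of the $v_i$ and below via Proposition~\ref{cutcrossing_edges_2}, and use the hypothesis $r_{2^{b+1}}(m)\ge 2^d$ to ensure $|A|\ge 2^d$. The only cosmetic difference is that you derive $|A|\ge 2^d$ by invoking the subadditivity of $r_{2^{b+1}}$ together with the congruence $\sum_C a_C\equiv m\pmod{2^{b+1}}$, whereas the paper reaches the same bound by an explicit signed count of points across full, empty, and mixed cosets; the two are interchangeable.
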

	\begin{proof}
		Take the linear span of the $2^b$ distinct differences. This span must have dimension at least $b+1$ (otherwise the number of vectors would be at most $|\ff_2^{b}\setminus \{0\}|=2^b-1$). Pick $b+1$ linearly independent differences $(v_1, v_2, ..., v_{b+1})$ with $p_{v_i}\ge c$ for all $1\le i\le b+1$, and let $L=\left\langle v_1,v_2, ..., v_{b+1}\right\rangle$. Consider the $L$-cosets of $\ff_2^n$ based on how many elements of $S$ they contain. A coset containing $2^{b+1}$ or $0$ elements of $S$ will be called \textit{full} or \textit{empty} respectively. The remaining elements of $S$ (not in full cosets) are spread across some cosets (say, $\ell$ of them): let $U$ be the union of these cosets. Since $r_{2^{b+1}}(m)\ne 0$, we must have $\ell\ge 1$. On the vertex set $U$, define the graph with edges joining all pairs of vertices with a difference of $v_i$ for any $1\le i\le b+1$. This graph is isomorphic to $\ell Q_{b+1}$; let the $i$-th copy of $Q_{b+1}$ be $Q_{b+1}^{(i)}$. For each $i$, define a partition of $Q_{b+1}^{(i)}$ into sets $A_i$ and $B_i$ as follows: if $|Q_{b+1}^{(i)}\cap S|\le \frac12\cdot 2^{b+1}$ then let $A_i=Q_{b+1}^{(i)}\cap S$ and $B_i=Q_{b+1}^{(i)}\setminus S$, and otherwise let $B_i=Q_{b+1}^{(i)}\cap S$ and $A_i=Q_{b+1}^{(i)}\setminus S$. (This way, for each $1\le i\le \ell$ we will have $|A_i|\le |B_i|$.) Let $A=\cup_{i=1}^{\ell} A_i$ and $B=\cup_{i=1}^{\ell} B_i$. Thus we obtained a partition $U=A\sqcup B$.
		
		Since for each $i$, we have $p_{v_i}\ge c$, this means that at most $m-2c$ elements $w\in S$ satisfy the property that $w+v_i\not\in S$. This means that altogether (summing over all $1\le i\le b+1$), the number of pairs $(w,w')$ with $w\in S$ and $w'\not\in S$ such that $w'-w=v_i$ for some $1\le i\le b+1$, which is the same as $e(U\cap S, U\setminus S)=e(A,B)$, is at most $(b+1)(m-2c)$. 
		
		Without loss of generality, let $S\cap U=A_1\sqcup A_2\sqcup ...\sqcup A_{\ell'}\sqcup B_{\ell'+1}\sqcup ... \sqcup B_{\ell}$. Now if the number of full cosets is $f$ then 
		\begin{equation}
			\begin{split}
				m&=f\cdot 2^{b+1}+|A|+\sum_{i=\ell'+1}^{\ell} (|B_i|-|A_i|)=f\cdot 2^{b+1}+|A|+\sum_{i=\ell'+1}^{\ell} (2^{b+1}-2|A_i|)\\ &=z\cdot 2^{b+1}+\sum_{i=1}^{\ell'} |A_i|-\sum_{i=\ell'+1}^{\ell} |A_i|
			\end{split}
		\end{equation}
		
		for some integer $z$. Hence $2^d\le r_{2^{b+1}}(m)\le |m-z\cdot 2^{b+1}|\le \sum_{i=1}^{\ell} |A_i|=|A|$.
		
		Let us now apply Proposition \ref{cutcrossing_edges_2} with $n=b+1$ and $k=\ell$, using the conditions $|A|\ge 2^d$ and $d\le b-1=n-2$. The number of crossing edges is at least $2^d(n-d)=2^d(b+1-d)$ and, as seen previously, at most $(b+1)(m-2c)$. This completes the proof.
	\end{proof}
	
	\begin{lemma}\label{sumofsquaresbound}
		Let $(h_1,h_2,...,h_{\alpha})$ be a sequence of non-negative real numbers with $\sum_{i=1}^{\alpha} h_i=N$. Suppose that $0\le \alpha'\le \alpha$ for $\alpha' \in \mathbb{Z}$ and $0<M_1<M$ for $M_1,M\in \mathbb{R}$.
		
		If $h_i\le M$ holds for all $i$, and $h_i>M_1$ for at most $\alpha'$ values of $i$, then $$\sum_{i=1}^{\alpha} h_i^2\le \alpha'M^2+(N-\alpha'M)M_1.$$
	\end{lemma}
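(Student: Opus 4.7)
The plan is to exploit convexity of $x \mapsto x^2$ together with the two linear upper bounds $h_i \le M$ and (for most indices) $h_i \le M_1$. Concretely, partition the index set as $I \sqcup I^c$ with $I = \{i : h_i > M_1\}$, so that $|I| \le \alpha'$ by hypothesis, every $i \in I$ satisfies $M_1 < h_i \le M$, and every $i \in I^c$ satisfies $h_i \le M_1$.

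For $i \in I$ I would use the crude bound $h_i^2 \le M \cdot h_i$, and for $i \in I^c$ the bound $h_i^2 \le M_1 \cdot h_i$. Summing and letting $S_I = \sum_{i \in I} h_i$, this yields
\[
\sum_{i=1}^{\alpha} h_i^2 \;\le\; M \cdot S_I + M_1 \cdot (N - S_I) \;=\; (M - M_1)\, S_I + M_1 N.
\]
Since $M - M_1 > 0$ and $S_I$ is a sum of at most $\alpha'$ terms each $\le M$, we have $S_I \le \alpha' M$. Substituting this upper bound gives
\[
\sum_{i=1}^{\alpha} h_i^2 \;\le\; (M - M_1)\,\alpha' M + M_1 N \;=\; \alpha' M^2 + (N - \alpha' M)\,M_1,
\]
which is exactly the claimed inequality.

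There is essentially no obstacle here: the argument is a one-line application of the two-slope convex envelope bound for $x^2$ on the intervals $[0, M_1]$ and $[M_1, M]$. The only thing worth a brief remark is that the statement makes sense (and remains valid) even if $N < \alpha' M$, in which case the final term $(N - \alpha' M)M_1$ is negative but the inequality $S_I \le \alpha' M$ used in the last step is still correct and still gives the stated bound.
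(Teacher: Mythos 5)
Your proof is correct, and it takes a genuinely different route from the paper's. The paper centers the values at $M_1/2$ and writes
\[
\sum_{i=1}^{\alpha} h_i^2 = \sum_{i=1}^{\alpha}\left(h_i-\frac{M_1}{2}\right)^2 + NM_1 - \alpha\frac{M_1^2}{4},
\]
then bounds each shifted term by its worst case: $(M-M_1/2)^2$ for at most $\alpha'$ indices and $(M_1/2)^2$ for the rest, which produces the result after simplification. You instead use the chord bound $x^2 \le Cx$ on the interval $[0,C]$ separately with $C=M$ on $I=\{i:h_i>M_1\}$ and $C=M_1$ on $I^c$, yielding $\sum h_i^2 \le (M-M_1)S_I + M_1 N$, and then bound $S_I \le \alpha' M$ using $|I|\le\alpha'$ and $h_i\le M$. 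Both arguments are short and elementary; your version is perhaps more transparent (it makes the two linear majorants of $x^2$ visible explicitly), at the cost of one additional step (the bound on $S_I$, which requires $M-M_1>0$). The paper's shift by $M_1/2$ is a slicker identity but slightly less self-explanatory. Your closing remark that the conclusion stands even when $N<\alpha'M$ is correct and harmless, though not needed in the application.
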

	\begin{proof}
		Since $\alpha-\alpha'$ of the values lie in $[0,M_1]$ and the others in $[0,M]$, we have $$\sum_{i=1}^{\alpha} \left(h_i-\frac{M_1}{2}\right)^2\le \alpha'\left(M-\frac{M_1}{2}\right)^2+(\alpha-\alpha')\left(\frac{M_1}{2}\right)^2=\alpha'M^2-\alpha'MM_1+\alpha\frac{M_1^2}{4}.$$
		
		This means that $\sum_{i=1}^{\alpha} h_i^2=\sum_{i=1}^{\alpha} \left(h_i-\frac{M_1}{2}\right)^2+NM_1-\alpha\frac{M_1^2}{4} \le \alpha'M^2-\alpha'MM_1+NM_1$ holds, as required.
	\end{proof}
	
	Now we are ready to prove Proposition \ref{energybound}.
	
	\begin{proof}[Proof of Proposition \ref{energybound}]
		Throughout the proof, we will use the fact that $m$, and hence $k$, are taken to be sufficiently large.
		
		In order to bound the additive energy, we will use the expression $$E(S)=|S|^2+4\sum\limits_{{v}\in \ff_2^n\setminus \{0\}} p_{v}^2.$$ Firstly, it is clear that for any ${v}\ne 0$, we have $p_{v}\le \frac12m$. Set $c=\frac12m-\frac{\eps}{4(\alpha+\eps)}2^{\alpha k}$. Let us put an upper bound on the number of differences ${v}\in \ff_2^n\setminus \{0\}$ such that $p_{v}\ge c$. Let $\Delta$ be the number of such differences.
		
		In order to do this, we will apply Proposition \ref{manyrichdiff}. Via replacing $c$ by $\lceil c\rceil$, it is clear that the statement of this proposition holds for real numbers $c$ as well. 
		
		Fix $b=\lceil (\alpha+\eps)k\rceil$. Then we have $r_{2^b}(m)\ge 2^{\alpha k}$ and hence $r_{2^{b+1}}(m)\ge 2^{\alpha k}$ too. Set $d=\lfloor \alpha k\rfloor$; then if $\Delta\ge 2^b$ then the conditions of the Proposition hold, so we have $(b+1)(m-2c)\ge 2^d(b+1-d)$.
		
		Via differentiation, it can be seen that the function $d\mapsto 2^d(b+1-d)$ is increasing on the real interval $[0,b-1]$, therefore we have $2^d(b+1-d)\ge 2^{\alpha k-1}(b+1-(\alpha k-1))$ if $0\le \alpha k-1$ and $\lfloor \alpha k\rfloor\le b-1$, which is equivalent to $1\le \alpha k<b$, which is true for $k\ge 2$.
		
		So we have

		$$(b+1)(m-2c)\ge 2^{\alpha k-1}(b+1-(\alpha k-1)).$$
		
		Using $(\alpha+\eps)k\le b\le (\alpha+\eps)k+1$:
		
		$$((\alpha+\eps)k+2)\frac{\eps}{2(\alpha+\eps)} 2^{\alpha k}\ge 2^{\alpha k-1}((\alpha+\eps)k-\alpha k+2)$$
		
		$$\Leftrightarrow\frac{\eps}{\alpha+\eps}\ge \frac{\eps k+2}{(\alpha+\eps)k+2},$$

		would be a contradiction. Therefore we must have $\Delta<2^b$, so the number of differences such that $p_v\ge c$ is less than $2^b$.
		
		Therefore, by using Lemma \ref{sumofsquaresbound} for the $\alpha=2^n-1$ values $p_{v}$ with $N=\sum p_{v}=\binom{|S|}{2}=\frac{m(m-1)}{2}$, $~M=\frac12m$, $~M_1=c$ and $\alpha'=2^b$ (for $k>b$, we have $2^b<2^k\le m\le 2^n$, so the condition $\alpha'\le \alpha$ holds), we have 
		
		\begin{equation*} \label{eq1}
			\begin{split}
				\sum_{{v}} p_{v}^2 
				& \le 2^b\left(\frac12m\right)^2+\left(\frac{m(m-1)}{2}-2^b\cdot \frac12m\right)c
				\le 2^b\cdot \frac14m^2 +\left(\frac12m^2-2^b\cdot \frac12m\right)\left(\frac12m-\frac{\eps}{4(\alpha+\eps)}2^{\alpha k}\right)\\
				&=\frac14m^3-\frac{\eps}{8(\alpha+\eps)}m^2\cdot 2^{\alpha k}+\frac{\eps}{8(\alpha+\eps)}m\cdot 2^{\alpha k+b}
				\le \frac14m^3-\frac{\eps}{8(\alpha+\eps)} m^2\cdot\frac{m^{\alpha}}{2^{\alpha}}+\frac{\eps}{8(\alpha+\eps)} m\cdot 2m^{2\alpha+\eps}\\  
				&=\frac14m^3-\frac{\eps}{8\cdot 2^{\alpha}\cdot (\alpha+\eps)} m^{2+\alpha}+\frac{\eps}{4(\alpha+\eps)} m^{1+2\alpha+\eps},
			\end{split}
		\end{equation*}

		where $1+2\alpha+\eps<2+\alpha$ holds, since $\alpha+\eps<1$. Thus we obtain $m^{1+2\alpha+\eps}=o(m^{2+\alpha})$, and
		
		$$E(S)=m^2+4\sum\limits_{{v}\in \ff_2^n\setminus \{0\}} p_{v}^2\le m^3-\frac{\eps}{2^{\alpha+1}(\alpha+\eps)}m^{2+\alpha}+o(m^{2+\alpha}),$$
		
		showing the statement.
	\end{proof}
	
	\section{Concluding remarks}
	
	For general pairs $(k,t)$, it remained  open to decide whether the density of the spectrum $\rho(n;k,t)= \frac{Sp(n;k,t)}{2^n}$ tends to $1$ as $n$ tends to infinity, or even to obtain a general constant lower bound on this density for a given pair $(k,t)$ with $0\leq t \leq 2^k$. The proof of Theorem \ref{3times2power} suggests that such a result might be deduced from a supersaturation type result on $[k,t]$-flats, similar to the lower bound on $F_{2,3}$ derived from Proposition \ref{energybound}. 
	
	We did not elaborate on the case $q>2$. However, note that the approach we used to exclude several values from the spectra $Sp(n; k,t)$ applies for $q>2$ as well. Indeed, we can construct point sets as a 
	combination of disjoint full flats and point sets which hit every $k$-flat in small number of points, i.e., evasive sets. However, proving density results on the spectrum appear to be much harder, as it was the case even for $q=3$ with lines (or full flats), as the general problem contains the determination of the order of magnitude of the maximum size of $q$-AP-free sets of $\ff_q^n$.
	
	While we focused on the sets in $\AG(n,q)$, $q=2$, we mention that the analogous question was investigated by  Ihringer and Verstra\"ete \cite{Ferdinand} for point sets $X$  in a projective space  $\PG(n,q)$ where all $k$-dimensional subspaces must have a bounded number of points. However their results  concern  the case when $q$ is large.\\
	
	\noindent {\bf Acknowledgement}\\
	We would like to thank the anonymous referees for their valuable suggestions  which helped us in improving the paper.

\end{document}